\newcommand{\N}{\mathbb N}
\newcommand{\R}{\mathbb R}
\newcommand{\D}{\mathbb D}
\def\P{\mathbb P}
\newcommand{\be}{\begin{equation}}
\newcommand{\ee}{\end{equation}}
\def\1{{\bf 1}}
\def\ep{\epsilon}
\def\Dt0{{\bf D}}
\def\E{{\bf E}}
\def\M{{\bf M}}
\def\P{{\bf P}}
\def\p{{\bf p}}
\def\to{\rightarrow}
\def \to{\rightarrow}
\def \R {\mathbb{R}}
\def \N {\mathbb{N}}
\def \ep{\varepsilon}
\def\E{\mathbb E}
\def\P{\mathbb P}
\definecolor{ProcessBlue}{cmyk}{1,0,0,0.40}
\newtheorem{Theorem}{Theorem}[section]
\newtheorem{Definition}[Theorem]{Definition}
\newtheorem{Proposition}[Theorem]{Proposition}
\newtheorem{Lemma}[Theorem]{Lemma}
\newtheorem{Corollary}[Theorem]{Corollary}
\newtheorem{Remark}{Remark}[section]
\newcommand{\FR}{{\cal F}}
\newcommand{\XR}{{\cal X}}
\title{Mean Field Games in a Stackelberg problem with an informed major player}
\author{Philippe Bergault\thanks{CEREMADE, UMR CNRS 7534, Universit\'e Paris Dauphine-PSL,
Place de Lattre de Tassigny, 75775 Paris Cedex 16, France. e-mail: bergault@ceremade.dauphine.fr} \and 
Pierre Cardaliaguet\thanks{CEREMADE, UMR CNRS 7534, Universit\'e Paris Dauphine-PSL,
Place de Lattre de Tassigny, 75775 Paris Cedex 16, France. e-mail: cardaliaguet@ceremade.dauphine.fr}
 \and Catherine Rainer\thanks{ Univ Brest,  UMR CNRS 6205, Laboratoire de Mathématiques de Bretagne Atlantique, 6, avenue Victor-le-Gorgeu, B.P. 809, 29285 Brest cedex, France. e-mail: Catherine.Rainer@univ-brest.fr }}
\begin{document}

\maketitle

\begin{abstract} We investigate a stochastic differential game in which a major player has a private information (the knowledge of a random variable), which she discloses through her control to a population of small players playing in a Nash Mean Field Game equilibrium. The major player's cost depends on the distribution of the population, while the cost of the population depends on the random variable known by the major player. We show that the game has a relaxed solution and that the optimal control of the major player is approximatively optimal in games with a large but finite number of small players. 
\end{abstract}

\tableofcontents


\section{Introduction} 

We are interested in a model with a major player and infinitely many small players. In this model, the major player has a private information, which is disclosed to her at the beginning of the game. The game is then played in a Stackelberg equilibrium: the major player announces her strategy and, given this strategy, the small players play a mean field game (MFG) Nash equilibrium. The whole point is that, as the strategy of the major player might depend on her private information, the realization of her control gives a hint on this private information to the small players along the time. 

The paper relies on the literature on mean field games, which investigates differential games with infinitely many players. Since the pioneering works of Lasry and Lions \cite{LL06cr1, LL06cr2, LLJapan, LiCoursCollege} and of Caines, Huang and Malham\'e \cite{huang2006large, HCM07, HCM07-2}, the topic has known a fast growth: we refer the interested reader to the monographs \cite{ACDPS20, CDLL, carmona2013probabilistic} and the references therein. Here we consider an MFG with a major player: such class of models has been introduced by Huang in \cite{H10} and discussed further  in \cite{BCY16, BLL20, CCP20, CW16, CW17, CZ16, MB15, NH12, NC13, SC16}. We follow here the approach of \cite{BCY16, MB15, NH12}, in which the problem is played in a Stackelberg equilibrium, the major player being the leader. A related work is \cite{EMP19}, which makes the link between the litterature of principal-agent problems and MFG. The very interesting and recent paper \cite{Dj23} makes the link between Stackelberg equilibria in games with finitely many players and a major player and with  mean field games with a major player: it shows in particular how to use the optimal strategy of the limit game in the $N-$player game.

Little is known on MFG problems with information issues.  \c{S}en and  Caines  \cite{SC16} and Firoozi and Caines \cite{FiCa20} study a MFG problem in which the small players observe only partially a major agent. The approach uses  nonlinear filtering theory to build an associated completely observed system.  Casgrain and Jaimungal investigate in \cite{CaJa20} a MFG in which the agents have a different belief on what model the real world follows. In \cite{Be22} Bertucci considers MFG problems with a lack of information, in which the players have the same information, and builds an associated master equation. The recent paper \cite{BeReTa} investigates situations where the players have to spend some effort to obtain information on their state. The main difference between the works \cite{BeReTa, Be22, CaJa20, FiCa20, SC16} quoted above and  our framework is that, in our setting, the major player actively manipulates her information in order to reach some goal. 
\bigskip

In order to present our contribution, let us describe a little further our model. We consider a MFG problem with an informed player in a Stackelberg problem. 
\begin{itemize}
\item At time $0$ (before the game starts), nature chooses at random an index ${\bf i}\in \{1, \dots, I\}$ with law $p^0=(p^0_i)_{i=1,\dots, I}$ and announces the result to the major player only. The index ${\bf i}$ is  the private information of the major player. 
\item The goal of the major player is to minimize over her (random) control $({\bf u}^0=({\bf u}^0_i)_{i=1, \dots, I}))$ a cost of the form
$$
\E\left[\int_0^T L^0_{\bf i}(t, {\bf u}^0_{{\bf i},t},{\bf m}_t)dt\ |\ {\bf i}=i\right] 
$$
where ${\bf m}_t$ is the (random) distribution of the small players. 

\item Once the major player has announced her strategy, the small players, observing the realization ${\bf u}^0_{\bf i}$ of the control of the informed player and their own state,  minimize (in a decentralized way through a mean field game) their cost
$$
\E\left[ \int_0^T L_{\bf i}(X_t, \alpha_t)+F_{{\bf i}}(X_t,{\bf m}_t)dt+ G_{{\bf i}}(X_T, {\bf m}_T)\right],
$$
where $(\alpha_t)$ is the control  of a typical small player and $(X_t)$ is the process
$$
X_t=Z+\int_0^t \alpha_sds+ \sqrt{2} B_t, \qquad t\in [0,T]. 
$$
Above, $Z$ is a random variable on $\R^d$ with law $m_0\in  \mathcal P_2(\R^d)$, $B$ is a standard $d-$dimensional Brownian motion, with $Z$, $B$ and $({\bf i},{\bf u}^0)$ independent.   For any $i\in \{1, \dots, I\}$, $L_i : \R^d\times \R^d\to \R$ is the cost for a small player to play a control, $F_i, G_i: \R^d \times  \mathcal P_2(\R^2)\to \R$ are the interaction costs between the small players, $L^0_i : [0,T]\times U^0\times \mathcal P_2(\R^2)\to \R$ is the running cost for the major player. The maps $L_i$, $F_i$, $G_i$ and $L^0_i$ are at least continuous and locally bounded.
\end{itemize}

The fact that the small player's cost is given by a mean field game equilibrium means that the distribution of players ${\bf m}_t$ is the conditional law of the optimal path $X^*$ given ${\bf u}^0$. 

Note that, in contrast with the major player, the minor players do not know the index ${\bf i}$. However they can obtain some information on ${\bf i}$ by observing the major player's actions. 


In this very elementary game, interactions between major and minor players are minimal: the small players appears in the major player's cost only through the distribution of their state, while the major player influences the minor player's actions only by revealing (or not) information on ${\bf i}$. This information  structure is inspired by Aumann-Maschler theory of repeated games with incomplete information: see \cite{AuMaSt} for a presentation of the theory and \cite{CaRa09} for its  extension  to two player zero-sum stochastic differential games. \\

Our main results are the following: first we rewrite the problem in a relaxed form and show that the major player has an optimal strategy for the game (Theorem \ref{thm.existence}). This relaxed formulation involves the theory of MFG with a common noise (common noise being here the information disclosed by the major player) as developed in \cite{CSS22}. Second, we show that such an optimal strategy is still approximatively optimal in Stackelberg games with finite many small players (Theorem \ref{thm.main}). The fact that the optimal strategy in a mean field game provides an approximated Nash equilibria for the associated game with finitely many players is very classical: see \cite{carmona2013probabilistic, huang2006large} for instance. The extension of such a property to Stackelberg equilibria in MFG with a large player has only been handled  very recently in \cite[Theorem 2.20]{Dj23} in a very general framework, but quite different from our: indeed in \cite{Dj23}, the major player can choose among the Nash equilibria of the $N-$player game, while in our set-up it is more natural that she has to handle any such Nash equilibrium. This difference leads us to show the approximate optimality of the control for the major player when the small players play {\it any} (approximate) Nash equilibria. To prove such a stability, we use in a very strong way the characterization of the limit of  (approximate and open loop) Nash equilibria  in differential games with a large number of players as discussed by Lacker \cite{La16} and Fischer \cite{Fi17}, as well as the standard Lasry-Lions monotonicity condition which ensures the uniqueness of the MFG Nash equilibrium. The main difference with Lacker (besides  the framework, which is much more general in \cite{La16}) is the fact that, in our case, the common noise is the revelation by the major player of her private knowledge (and thus does not have a fixed law), while in \cite{La16} it is a fixed Brownian motion.  Let us finally underline that, in the game with finitely many players, we allow the small players to play only ``open loop strategies'': following Lacker and Flem \cite{LaFl21} and Djete \cite{Dj21}, an extension to problems in which the small players are allowed to play closed loop strategies is probably doable, but the proof would be much more technical. \\

Let us finally comment upon the choice of Stackelberg versus Nash equilibria. As explained above, the analysis of Nash equilibria for MFG problems with a major player is well understood \cite{CCP20, CW16, CW17, CZ16}. However the analysis of Nash equilibria with incomplete information seems much more challenging. Indeed, in order to study this question, one should understand how the small players could play in such a game without knowing a priori the major player's strategy: this question is understood in a two player zero-sum game framework \cite{CaRa09}, but remains completely open for nonzero-sum differential games.

\section{Notation, assumption and definitions} 
\subsection{Notation and assumptions} 

We  work within the set $\mathcal P(\R^d)$ of Borel probability measures on $\R^d$. The set $\mathcal P_2(\R^d)$ is the subset of $\mathcal P(\R^d)$ of measures with finite second order moment; it is endowed with the Wasserstein distance ${\bf d}_2$. 

We fix $I\in \N$, with $I\geq 2$. We denote by $\Delta(I)$ the simplex $\Delta(I)= \{p\in \R^I_+, \; \sum_i p_i=1\}$ interpreted as the set of probability measures over $\{1, \dots, I\}$. We also fix $p^0\in \Delta(I)$, which is the initial belief of the small players on the random variable ${\bf i}$ (which has law $p^0$). 

Let $\Dt0$ be the set of c\`{a}dl\`{a}g functions from $\R\to \Delta(I)$, endowed with the Meyer-Zheng topology. Let us recall that, if $\Dt0$ is not a Polish space, it is continuously embedded into the Polish space $\mathcal P$ of Borel probability measures on $[0,T]\times \Delta(I)$ with first marginal equal to the Lebesgue measure. Hence any Borel probability measure on $\Dt0$ may be viewed as a Borel probability measure on  the Polish space $\mathcal P$. We use this property throughout the paper. 


Recall that, for any $i\in \{1, \dots, I\}$, $L_i : \R^d\times \R^d\to \R$ is the cost for a small player to play a control, $F_i, G_i: \R^d \times  \mathcal P_2(\R^2)\to \R$ are the interaction costs between the small players, $L^0_i : [0,T]\times U^0\times \mathcal P_2(\R^2)\to \R$ is the running cost for the major player. Given $p\in \Delta(I)$, $x,u,\xi\in \R^d$ and $m\in \mathcal P(\R^d)$, we set
\be\label{def.LFGp}
L(x,u, p)= \sum_{i=1}^I p_i L_i(x,u), \; F(x,m,p)= \sum_{i=1}^I p_i F_i(x,m), \; G(x,m,p)= \sum_{i=1}^I p_i G_i(x,m), 
\ee
$$
H(x,\xi, p)= \sup_{u\in \R^d} -\xi \cdot u-L(x,u,p), 
$$
$$
L^0(s, u^0, p, m)=  \sum_{i=1}^I p_i L^0_i(s,u^0,p,m), 
\qquad \bar L^0(s, p, m)= \inf_{u^0\in U^0} L^0(s, u^0, p, m).
$$
Let us recall that a continuous, bounded map $K:\R^d\times \mathcal P(\R^d)\to \R$ is monotone if 
$$
\int_{\R^d} (K(x,m_1)-K(x,m_2))(m_1-m_2)(dx) \geq 0\qquad \forall m_1,m_2\in \mathcal P(\R^d).
$$ 
We say that the map $K$ is strongly monotone if there exists $\alpha>0$ such that
\begin{equation}\label{SM1}
\displaystyle \int_{\R^d} (K (x,m_1)-K(x,m_2)) (m_1-m_2)(dx) 
 \geq \alpha \int_{\R^d} (K(x,m_1)-K(x,m_2))^2dx. 
\end{equation}
It is called strictly monotone if 
\begin{equation}\label{SM2}
\displaystyle \int_{\R^d} (K (x,m_1)-K(x,m_2)) (m_1-m_2)(dx)\leq 0 \ \  \text{implies} \ \ m_1=m_2.
\end{equation}
A classical example of a map $K$ satisfying the strict and strong monotonicity conditions, which  goes back to \cite{LL06cr1, LL06cr2, LLJapan, LiCoursCollege}, is  
\be\label{athens1}
K(x,m) =  f (\cdot, m\ast \rho (\cdot)) \ast \rho.
\ee
where $\rho$ is a smooth, non negative and radially symmetric kernel with Fourier transform $\hat \rho$ vanishing almost nowhere, and $ f:  \R^d \times  \R\to \R$ is  strictly increasing and Lipschitz continuous in the second variable, that is,  there exists $\alpha\in (0,1)$ such that 
\[ \alpha \leq \frac{\partial  f}{\partial s}(x,s)\leq \alpha^{-1},\]
with $f(x,0)$ bounded. \\

\noindent {\bf Assumption:} The following conditions are in force throughout the paper:
\begin{itemize}
\item[(H1)] Cost functions of the major player: 
\be\label{controlU0}
\begin{array}{c}
\text{The control set  $(U^0,d^0)$ for the major player is a compact convex subset}\\
\text{of a finite dimensional space, not reduced to a singleton, and }\\
\text{for $i=1, \dots, I$,} \qquad L^0_i:[0,T]\times U^0\times \mathcal P_1(\R^d)\to \R \; \text{is continuous and bounded.}
\end{array}
\ee
\item[(H2)] Regularity and monotonicity of the cost functions of the small players: 
\be\label{monotonicity}
\begin{cases}
\text{ $F_i, G_i:\R^d\times \mathcal P_1(\R^d)\to \R$ are Lipschitz continuous and bounded,}\\
\text{ $\sup_{m\in {\mathcal P_1(\R^d)}} \|F_i(\cdot, m)\|_{C^{2+\alpha}}+\|G_i(\cdot, m)\|_{C^{2+\alpha}} \leq C$, for $C,\alpha>0$,}\\
\text{ $F_i$ and $G_i$ \ are strongly monotone, and $ F_i$   is strictly monotone.}
\end{cases}
\ee
\item[(H3)] Regularity of the Hamiltonians of the small players: for each $i\in \{1, \dots, I\}$, the map \\ 
$H_i:\R^d\times \R^d\to \R$ satisfies:
\begin{equation}\label{A:MAINHamiltonian0}
\text{there exists $C>0$ such that, for all $x,\xi\in \R^d$,  $C^{-1}|\xi|^2-C\leq H_i(x,\xi)\leq C(|x|^2+1)$}, 
\end{equation}
\begin{equation}\label{A:MAINHamiltonian}
	\left\{
	\begin{split}
	&\text{for all $R > 0$, }\R^d \times B_R \ni (x,\xi) \mapsto  H_i(x,\xi) \quad \text{is uniformly bounded and Lipschitz;}\\
	&\xi \mapsto  H_i(x,\xi) \quad \text{is uniformly convex},\; \text{for all $R>0$,}\; \|H(\cdot,\cdot)\|_{C^{2+\alpha}(\R^d\times B_R)}\leq C_R, 
	\end{split}
	\right.
\end{equation}
and
\begin{equation}\label{A:MAINHcondition}
	\left\{
	\begin{split}
	&\text{for some $\lambda_0,C_0 > 0$ and all $t \in [0,T]$, $\xi,\xi' \in \R^d$, and $|z| = 1$,}\\
	&|D_x  H(x,\xi)| \le C_0 + \lambda_0(\xi \cdot D_\xi  H(x,\xi) -  H(x,\xi)) \text{ and}\\
	&\lambda_0\left( D_\xi  H(x,\xi) \cdot \xi -  H(x,\xi) \right) + D^2_{\xi\xi}  H(x,\xi)\xi' \cdot \xi' + 2D^2_{\xi x} H(x,\xi) z \cdot \xi' + D^2_{xx}  H(x,\xi)z \cdot z \ge -C_0.
	\end{split}
	\right.
\end{equation}
\end{itemize}

A typical Hamiltonian satisfying the properties above is $H(x,\xi)= a(x)|\xi|^2$, where $a:\R\to \R$ is smooth and bounded above and below by positive constants. \\

Note that the (strong/strict) monotonicity assumption of the $F_i$ implies the (strong/strict)  monotonicity of the map $F(\cdot, \cdot, p)$ for any $p\in \Delta(I)$. Assumption \eqref{A:MAINHcondition}, introduced in \cite{CSS22}, ensures the solution to the Hamilton-Jacobi equation in the MFG system to be wellposed and plays a key role in the construction of a solution to this MFG system.  

\subsection{The mean field game system} 

We will use throughout the paper MFG equilibria and their relationships with the MFG system. Let $(\Omega, \mathcal F, \P)$ be  a probability space endowed with a filtration $ (\mathcal F_t)_{t\in [0,T]}$ satisfying the usual conditions. Let $p=(p_t)$ be a c\`{a}dl\`{a}g process taking values in $\Delta(I)$ and adapted to $ (\mathcal F_t)$. As explained below,  $p_t$ can be interpreted as the information  available at time $t$ to the minor players.

We are interested in the MFG system 
\be\label{eq.mfgintro}
\left\{\begin{array}{l}
d  \phi_t(x) = \left\{ - \Delta  \phi_t (x)+H(x,D \phi_t(x),p_t)-F(x,m_t,p_t)\right\}dt +d M_t(x)\qquad \text{in} \; (0,T)\times \R^d\\
dm_t(x) = \left\{ \Delta m_t(x) +{\rm div}(H_\xi(x,D \phi_t(x), p_t) m_t(x))\right\}dt \qquad \text{in} \; (0,T)\times \R^d \\
 m_0(x)=\bar m_0(x), \qquad  \phi_T(x)= G(x,m_T,  p_T) \qquad \text{in} \; \R^d
\end{array}\right.
\ee

The following definition is directly borrowed from \cite{CSS22}. We will see below that, under our standing assumptions, it can be  simplified. 

\begin{Definition}[\cite{CSS22}]\label{def.MFGsystintro} We say that a triple $(\phi, m, M)$ is a solution to \eqref{eq.mfgintro} on $(\Omega, \mathcal F, \P, (\mathcal F_t)_{t\in [0,T]})$ if 
\begin{itemize}
\item[(i)] $\phi: [0,T]\times \Omega \to C(\R^d)$  is a c\`{a}dl\`{a}g process adapted to the filtration $(\mathcal F_t)$, with $\phi_T(\cdot)= G(\cdot, m_T)$. 
\item[(ii)] $M:[0,T]\times \Omega \to \mathcal M_{loc}(\R^d)\cap W^{-1,\infty}  (\R^d)$  is a c\`{a}dl\`{a}g  martingale with respect to $(\mathcal F_t)$ starting at $0$, 
\item[(iii)] $m: [0,T]\times \Omega \to  \mathcal P_2(\R^d)$ is a continuous process adapted to the filtration $(\mathcal F_t)$, with $m_0= \bar m_0$, and  such that  $m_t$ has a bounded density on $\R^d$ $\P-$a.s. and for any $t\in [0,T]$, 
\item[(iv)] there exists a deterministic constant $C > 0$ such that, with probability one and for a.e. $t\in [0,T]$ 
\be\label{boundsuD2uMmintro}
\|D\phi_t\|_\infty+ {\rm ess-sup}\ m_+(D^2\phi_t) + \|M_t\|_{\mathcal M_{loc}\cap W^{-1,\infty}} +\|m_t\|_\infty \leq C.
\ee
\item[(v)]  $(\phi,M)$ satisfies, in the distributional sense on $\R^d$, for all $t\in [0,T]$ and  $\P-$a.s., the equality:
\be\label{def.HJintro}
\phi_t(x)= G(x,m_T, p_T)+\int_t^T (\Delta \phi_s(x) -H(x,D\phi_s(x), p_s)+F(x,m_s,p_s))ds +M_t(x)-M_T(x).
\ee
\item[(vi)] $\P-$a.s. and in the sense of distributions, $m$ solves the Fokker-Planck equation 
\be\label{eq.FPintro}
dm_t(x) = \left\{ \Delta m_t(x) +{\rm div}(H_\xi(x,D \phi_t(x), p_t) m_t(x))\right\}dt\qquad \text{in} \; (0,T)\times \R^d.
\ee
\end{itemize}
\end{Definition} 

We refer to \cite{CSS22} for the notation $\mathcal M_{loc}(\R^d)$ and $W^{-1,\infty}  (\R^d)$. As we explain now, as the diffusion is nondegenerate, the solution has more regularity and the notion of solution can be simplified. 

\begin{Theorem} \label{thmMFGintro} Under our standing assumptions, there exists a unique solution $(\phi, m,M)$ to \eqref{eq.mfgintro}. 
In addition, $D^2\phi_t$ and $M_t$ are absolutely continuous with a Radon-Nykodim derivative bounded in $L^\infty$: $\P-$a.s. and for every $t\in [0,T]$, 
\be\label{addregu}
\|D^2\phi_t\|_\infty+ \|M_t\|_\infty\leq C.
\ee
Finally the solution is unique in law: if $p$ and $\tilde p$ have the same law on $\Dt0$ and $(\phi, m,M)$ and $(\tilde \phi, \tilde m,\tilde M)$ are the associated solution of the MFG system, then $(m,p)$ and $( \tilde m,\tilde p)$ have the same law on $C^0([0,T], \mathcal P_2(\R^2))\times \Dt0$.
\end{Theorem}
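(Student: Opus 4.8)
The plan is to obtain $(\phi,m,M)$ directly from the common-noise MFG theory of \cite{CSS22}, the common noise here being the $\Delta(I)$-valued c\`adl\`ag process $p=(p_t)$. Because $H$, $F$ and $G$ depend on $p$ only through the convex combinations \eqref{def.LFGp}, all the structural requirements of \cite{CSS22} are inherited uniformly in $p\in\Delta(I)$: by \eqref{monotonicity} the maps $F(\cdot,\cdot,p)$ and $G(\cdot,\cdot,p)$ are bounded, Lipschitz, strongly monotone and bounded in $C^{2+\alpha}$ in space uniformly in $(m,p)$, while by (H3) the Hamiltonian $H(\cdot,\cdot,p)$ satisfies \eqref{A:MAINHamiltonian0}--\eqref{A:MAINHcondition} uniformly in $p$; and since $\Delta(I)$ is compact, $p$ is an admissible common noise (recall the Meyer--Zheng embedding of $\Dt0$ into a Polish space). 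Invoking the existence theorem of \cite{CSS22} then produces a triple satisfying Definition \ref{def.MFGsystintro}, in particular the a priori bounds \eqref{boundsuD2uMmintro}.

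\noindent\textbf{Step 2: the extra regularity \eqref{addregu}.} Here is where the nondegeneracy of the diffusion is used, to turn the one-sided bound $\mathrm{ess\text{-}sup}\,m_+(D^2\phi_t)\le C$ of \eqref{boundsuD2uMmintro} into a genuine two-sided bound on $D^2\phi_t$. I would argue as follows. Since $\|m_t\|_\infty\le C$ and $\|D\phi_t\|_\infty\le C$, the forcing $x\mapsto F(x,m_t,p_t)$ and the terminal datum $x\mapsto G(x,m_T,p_T)$ are bounded in $C^{2+\alpha}(\R^d)$ uniformly in $(t,\omega)$ by \eqref{monotonicity}, and the Hamiltonian is evaluated only on a fixed gradient ball, where by \eqref{A:MAINHamiltonian} the $H_i$ are bounded, Lipschitz and $C^{2+\alpha}$. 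Representing $\phi_t(x)$ (after enlarging the stochastic basis with an independent $d$-dimensional Brownian motion $W$ if needed) as the conditional value function of the stochastic optimal control problem with dynamics $dX_s=\alpha_s\,ds+\sqrt2\,dW_s$, running cost $L(X_s,\alpha_s,p_s)+F(X_s,m_s,p_s)$ and terminal cost $G(X_T,m_T,p_T)$, and exploiting the uniform parabolicity carried by the Laplacian together with the structural condition \eqref{A:MAINHcondition}, interior Schauder (or Bernstein-type) estimates yield a uniform-in-$(t,\omega)$ bound $\|\phi_t\|_{C^{2+\alpha}(\R^d)}\le C$, hence $\|D^2\phi_t\|_\infty\le C$. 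Finally, writing \eqref{def.HJintro} at times $t$ and $0$ and using $M_0=0$ gives the pathwise identity
\[
M_t(x)=\phi_t(x)-\phi_0(x)+\int_0^t\big(\Delta\phi_s(x)-H(x,D\phi_s(x),p_s)+F(x,m_s,p_s)\big)\,ds ,
\]
whose right-hand side is now bounded in $L^\infty(\R^d)$ uniformly in $(t,\omega)$ (by the $C^2$ bound on $\phi_t$, the gradient bound together with \eqref{A:MAINHamiltonian}, and the boundedness of $F$); so $M_t$ is absolutely continuous with $\|M_t\|_\infty\le C$, which is \eqref{addregu}.

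\noindent\textbf{Step 3: uniqueness for a fixed $p$, and uniqueness in law.} For two solutions on the same filtered space with the same $p$, I would apply It\^o's formula to the duality pairing $\langle\phi^1_t-\phi^2_t,m^1_t-m^2_t\rangle$: using the two equations, the self-adjointness of the Laplacian cancels the second-order terms, the martingale contributions vanish under expectation (the integrand $m^1-m^2$ being bounded and adapted), the convexity of $\xi\mapsto H_i(x,\xi)$ from \eqref{A:MAINHamiltonian} produces a favourably-signed bulk term, and what remains is that the expectation of the sum of the monotonicity bracket of $F$ (integrated in time) and of $G$ (at time $T$) is $\le 0$. By the strong monotonicity in \eqref{monotonicity} each bracket is $\ge 0$, hence both vanish, and the strict monotonicity of $F$ forces $m^1_t=m^2_t$ for a.e.\ $t$, hence for all $t$ by continuity; then $\phi^1=\phi^2$ because both solve the same backward equation \eqref{def.HJintro} (uniqueness for which is part of \cite{CSS22}), and $M^1=M^2$ by the identity of Step 2. (Equivalently one may simply quote the uniqueness statement of \cite{CSS22}, the conditions in \eqref{monotonicity} being precisely what it requires.) Having pathwise uniqueness together with weak existence, a standard Yamada--Watanabe argument (see e.g.\ \cite{carmona2013probabilistic}) shows that $m$ is adapted to the $\P$-completed filtration generated by $p$, so that $m=\Psi(p)$ $\P$-a.s.\ for a Borel map $\Psi$ from the path space of $p$ into $C^0([0,T],\mathcal P_2(\R^d))$, the same $\Psi$ for every admissible $p$; thus the law of $(m,p)$ on $C^0([0,T],\mathcal P_2(\R^d))\times\Dt0$ is the image of the law of $p$ under $p\mapsto(\Psi(p),p)$ and depends only on the law of $p$.

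\noindent\textbf{The main obstacle.} The only genuinely delicate point is the two-sided Hessian estimate in Step 2: one has to run a parabolic regularity argument for a backward \emph{stochastic} Hamilton--Jacobi equation, i.e.\ control the spatial derivatives of $\phi_t$ uniformly in $\omega$ in the presence of the martingale term $dM_t$, relying on the nondegenerate Laplacian and on condition \eqref{A:MAINHcondition}. The conditional value function representation is the natural tool, but justifying it (a verification argument matching it with \eqref{def.HJintro}) and propagating the $C^{2+\alpha}$ bound uniformly in the common-noise variable is where the real work concentrates; existence, uniqueness for a fixed $p$, and the Yamada--Watanabe passage to uniqueness in law are then comparatively routine once Steps 1 and 3 are in place.
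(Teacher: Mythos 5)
Your plan is in the right spirit, but it diverges from the paper's argument in two places, and the one you flag as ``the main obstacle'' is exactly where the paper uses a more direct device that you do not produce.

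\textbf{Existence.} You propose to ``invoke the existence theorem of \cite{CSS22}'' directly with the c\`adl\`ag martingale $p$ as common noise. The paper does not do this, and for a reason: what it actually quotes from \cite{CSS22} is Proposition~4.1, which gives a solution only when the common-noise filtration is \emph{finite} (piecewise-constant $p^n$ taking finitely many values). To handle a general c\`adl\`ag $\Delta(I)$-valued martingale the paper discretizes $p$ in time and space, obtains $(\phi^n,m^n,M^n)$ with an $n$-independent constant in \eqref{boundsuD2uMmintro}, and then runs the Cauchy/stability machinery of \cite[Thm.\ 3.4, Lemmas 3.8--3.9]{CeSo22} to pass to the limit. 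Your Step~1 thus needs the discretization-and-limit scaffolding to be a proof; as stated it rests on an existence theorem that does not apply at the level of generality required.

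\textbf{The Hessian and $M$ bounds.} Here the routes genuinely differ. You propose to view $\phi_t$ as a conditional value function and then appeal to Schauder/Bernstein estimates for the associated quasilinear parabolic problem, admitting that this is ``where the real work concentrates'' and not carrying it out. The paper instead uses the mild (Duhamel) representation that follows immediately from \eqref{def.HJintro}: taking conditional expectation with respect to $\mathcal F_t$ kills the martingale increments, yielding
\[
\phi_t(x)=\E\Bigl[(\Gamma(\cdot,T-t)\ast G(\cdot,m_T))(x)+\int_t^T(\Gamma(\cdot,s-t)\ast h_s)(x)\,ds\ \Big|\ \mathcal F_t\Bigr],\qquad h_s=H(\cdot,D\phi_s,p_s)-F(\cdot,m_s,p_s).
\]
Since $h$ is bounded (by \eqref{boundsuD2uMmintro}) and $G(\cdot,m)$ is uniformly $C^{2+\alpha}$, this first gives a uniform $C^{1,\alpha}$ bound on $\phi_t$; that, with (H2)--(H3), makes $h_t$ uniformly $C^\alpha$; and then the singular-kernel estimate $\int|D^2\Gamma(\cdot,t)|\,|\cdot|^\alpha\le C\,t^{\alpha/2-1}$ closes the bootstrap to give $\|D^2\phi_t\|_\infty\le C$, with $\|M_t\|_\infty\le C$ read off from \eqref{def.HJintro} and $M_0=0$. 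The heat-kernel route avoids any verification argument and any discussion of how the martingale term interferes with parabolic regularity, which is precisely the obstacle your sketch leaves open. In addition, your Step~2 omits the bootstrap: you assert the forcing is $C^{2+\alpha}$ in $x$, but the term $H(x,D\phi_t(x),p_t)$ inherits regularity only after one first controls $D\phi_t$ in a H\"older norm, and that intermediate step is essential.

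\textbf{Uniqueness.} Your pathwise Lasry--Lions duality argument and Yamada--Watanabe reduction for the law statement are reasonable and in line with what the references do; the paper simply defers both to \cite{CeSo22} and \cite[Thm.\ 4.2]{CSS22} and does not spell them out, so there is no disagreement there, only a difference in level of detail. The substantive gaps are thus the direct existence citation and the unresolved Hessian estimate.
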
 

A consequence of the additional regularity \eqref{addregu} is that $(M_t(x))$ is, for a.e. $x\in \R^d$ a martingale (and not only a martingale measure). In addition the  Hamilton-Jacobi equation \eqref{def.HJintro} can be understood in a pointwise sense (and not only in the sense of distributions). 

\begin{proof} We only sketch the proof, which is an easy adaptation of the proof of Theorems 4.1 and 4.2 of \cite{CSS22} and of  Theorem 3.4 of \cite{CeSo22}. The starting point is a space and time discretization of the process $p$. Fix $n\in \N$ large. We discretize the set $\Delta(I)$ into $\Delta_n(I)$  a finite and increasing in $n$ subset of $\Delta(I)$ such that $\cup_n \Delta_n(I)$ is dense in $\Delta(I)$. Let $\pi^n: \Delta(I)\to \Delta_n(I)$ be a Borel measurable map such that 
$$
\lim_{n\to \infty}\sup_{q\in \Delta(I)} |\pi^n(q)-q| =0.
$$
Let  $t^n_k= kT/2^n$ for $k\in \{0, \dots, 2^n\}$ a discretization of $[0,T]$. We set
$$
p^n_t= \pi^n(p_{t^n_k})\qquad \text{for}\; t\in [t^n_k, t^n_{k+1}), \; k=0, \dots 2^n, \qquad p^n_T= \pi^n(p_T) ,
$$
and denote by $(\mathcal F^n_t)$ the complete right-continuous filtration generated by $p^n$ (it is a subfiltration of $(\mathcal F_t)$). Note that, by construction, $(\mathcal F^n_t)$ is finite. 
Proposition 4.1 of \cite{CSS22} (inspired by \cite{CaDeLa16}) implies the existence of a solution $(\phi^n, m^n, M^n)$ to \eqref{eq.mfgintro} for the filtration $({\mathcal F}^n_t)$ and for the process $(p^n_t)$ instead of $(p_t)$. In addition, the constant $C$ in \eqref{boundsuD2uMmintro} is independent of $n$ (see \cite[Lemma 2.2]{CSS22}). 

In order to pass to the limit as $n\to\infty$, we now use the argument in the proof of Theorem 3.4 of \cite{CeSo22}\footnote{The approach of \cite{CeSo22} requires stronger monotonicity assumptions on $F$ and $G$ than \cite{CSS22} but avoids the notion of weak solution of \cite{CSS22}.}. Following Lemma 3.8 in \cite{CeSo22} and its proof, we have, for any $n, n'\geq 1$,  
\begin{align*}
&  \E\Bigl[ \| G(\cdot,m^n_T,p^n_T)-G(\cdot,m^{n'}_T,p^{n'}_T)\|_\infty^{d+2} +\int_0^T\| F(\cdot, m^n_t,p^n_t)- F(\cdot, m^{n'}_t,p^{n'}_t)\|_\infty^{d+2} dt\Bigr]\\
& \qquad \leq C(\sup_{q\in \Delta(I)} |\pi^n(q)-q|+ \sup_{q\in \Delta(I)} |\pi^{n'}(q)-q|).
\end{align*}
Then, by Lemma 3.9 of  \cite{CeSo22}, we infer that the family $(\phi^n)$ is a Cauchy sequence with respect to the family of seminorms
$$
\left(\sup_{t\in [0,T]}  \left(\E\left[\| \phi_t\|_{L^\infty (B_R)}^{d+1}\right]\right)^{1/(d+1)}\right)_{R>0}
$$
 We can then conclude as in the proof of Theorem 3.4 of \cite{CeSo22} that there exists a unique solution to \eqref{eq.mfgintro}. \\
  
 We now show the extra regularity of $\phi$ and $M$. Note that, by \eqref{def.HJintro}, we have the following representation formula for the solution: 
 $$
 \phi_t(x)= \E\left[ (\Gamma(\cdot, T-t)\ast G(\cdot,m_T))(x) +\int_t^T (\Gamma(\cdot, s-t)\ast h_s)(x)ds\ |\; \mathcal F_t\right]
 $$
 where $\Gamma$ is the heat kernel and 
 $$
 h_t(x)=H(x,D\phi_t(x), p_t)-F(x,m_t,p_t),
$$ 
 which is adapted and bounded. By our regularity assumptions of $G(\cdot,m)$ (uniform in $m$) and the fact that $h$ is bounded, we infer that, for any $\alpha\in (0,1)$, 
 $$
 \sup_{t\in [0,T]} \|\phi_t( \cdot)\|_{C^{1,\alpha}} \leq C_\alpha. 
 $$
 for some constant $C_\alpha$.  This in turn (with the regularity of $H$ and $F(\cdot, m)$ and the bound on $\|D\phi_t\|_\infty$) implies that 
 $$
 \sup_{t\in [0,T]} \|h_t(\cdot)\|_{C^\alpha} \leq C_\alpha. 
 $$
 As 
 $$
 \int_{\R^d} |D^2\Gamma(x,t)||x|^\alpha dx \leq C_\alpha t^{\alpha/2-1}, 
 $$
 we deduce that 
 \begin{align*}
 |D^2 \phi_t(x)|& \leq  \| D^2G\|_\infty  +\E\left[\int_t^T \int_{\R^d} |D^2\Gamma(x-y, s-t)||h_s(y)-h_s(x)|dyds\ |\; \mathcal F_t\right] \\
&  \leq   \|D^2G\|_\infty + \sup_{(t,\omega)\in [0,T]\times \Omega} \|h_t(\cdot)\|_{C^\alpha} \int_t^T \int_{\R^d} |D^2\Gamma(x-y, s-t)||y-x|^\alpha dyds \\
&  \leq  C_\alpha( 1+  \int_t^T (s-t)^{\alpha/2-1}ds)\leq C_\alpha.
 \end{align*}
Recalling \eqref{def.HJintro} and the fact that $M_0=0$  also gives the $L^\infty$ bound on $M$. \\

The  uniqueness in law can be proved as in \cite[Theorem 4.2]{CSS22}.
 \end{proof}
 
 We now show that, given a random distribution of players, the solution to the backward Hamilton-Jacobi (HJ) equation can be interpreted as a value function: 
 
 \begin{Proposition}\label{prop.HJHJHJ} Let $(\Omega, \mathcal F, \P, (\mathcal F_t)_{t\in [0,T]})$ be a filtered probability space and $p=(p_t)$ be a c\`{a}dl\`{a}g process taking values in $\Delta(I)$ and adapted to $ (\mathcal F_t)$. Let also $(m_t)$ be a  continuous random process with values in $\mathcal P_2(\R^d)$, adapted to the filtration $ (\mathcal F_t)$. Then the HJ equation 
 \be\label{HJHJHJ}
 \left\{\begin{array}{l}
d  \phi_t(x) = \left\{ - \Delta  \phi_t (x)+H(x,D \phi_t(x),p_t)-F(x,m_t,p_t)\right\}dt +d M_t(x)\qquad \text{in} \; (0,T)\times \R^d\\
\phi_T(x)= G(x,m_T,  p_T) \qquad \text{in} \; \R^d
\end{array}\right.
\ee
has a unique solution in the sense described above. Let also $(\Omega^1, \mathbb F^1, \P^1, (\mathcal F^1_t))$ be another filtered probability space  supporting a Brownian motion $B$ and a random variable $Z$ on $\R^d$ of law $\bar m_0$ ($B$ and $Z$ being independent) and  $\alpha^*$ and $X^*$ be given by 
$$
X^*_t = Z-\int_0^t H_\xi(X^*_s, D\phi_s(X^*_s),p_s)ds + \sqrt{2}B_t, \qquad \alpha^*_t= - H_\xi(X^*_t, D\phi_t(X^*_t), m_t,p_t),\qquad  t\in [0,T] . 
$$
Then, for any control $\alpha\in L^2((0,T)\times \Omega\times \Omega_1)$ adapted to the filtration generated by $(p,m,B)$,
\begin{align*}
\E^{\P\otimes \P^1}\left[ \phi_0(Z)\right]& = \E^{\P\otimes \P^1}\left[ \int_0^T (L(X^*_s,\alpha^*_s,p_s)+F(X^*_s,m_s,p_s))ds+G(X^*_T,m_T,p_T)\right] \\
& \leq \E^{\P\otimes \P^1}\left[ \int_0^T (L(X_s,\alpha_s,p_s)+F(X_s,m_s,p_s))ds+G(X_T,m_T,p_T)\right]\\
& \qquad - C^{-1}\E^{\P\otimes \P^1}\left[ \int_0^T |\alpha_s+ H_\xi(X_s, D\phi_s(X_s),p_s)|^2ds \right],
\end{align*}
where $X_t= Z+\int_0^t \alpha_sds+   \sqrt{2}B_t,\qquad  t\in [0,T] $. 
\end{Proposition}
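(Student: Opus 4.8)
The plan is to prove this as a verification-type result: we already have the solution $(\phi, m, M)$ to \eqref{HJHJHJ} from Theorem \ref{thmMFGintro} (the existence and uniqueness of the solution to the HJ equation alone, for given $(m_t)$, follows from the same arguments, since the Fokker-Planck equation played no essential role in the well-posedness of the backward equation), so it remains to identify $\E[\phi_0(Z)]$ with the optimal cost and to establish the quadratic gap for suboptimal controls. First I would apply It\^o's formula to $t\mapsto \phi_t(X_t)$ along a generic admissible trajectory $X_t = Z + \int_0^t \alpha_s\,ds + \sqrt 2 B_t$. Using the extra regularity \eqref{addregu}, which guarantees $D^2\phi_t \in L^\infty$ and that $M_t(x)$ is a genuine (square-integrable) martingale for a.e.\ $x$, plus the $C^{1,\alpha}$ bounds established in the proof of Theorem \ref{thmMFGintro}, the It\^o expansion is justified: the $dM$ term and the $dB$ term are martingales with zero expectation, the $\Delta\phi_t$ from the second-order It\^o correction cancels the $-\Delta\phi_t$ in the drift of \eqref{HJHJHJ}, and one is left with
\begin{align*}
\E[\phi_T(X_T)] - \E[\phi_0(Z)] = \E\Big[\int_0^T \big(D\phi_s(X_s)\cdot \alpha_s + H(X_s, D\phi_s(X_s), p_s) - F(X_s, m_s, p_s)\big)\,ds\Big].
\end{align*}
Since $\phi_T(x) = G(x, m_T, p_T)$, rearranging gives
\begin{align*}
\E[\phi_0(Z)] = \E\Big[\int_0^T\! \big(-D\phi_s(X_s)\cdot\alpha_s - H(X_s, D\phi_s(X_s), p_s) + F(X_s,m_s,p_s)\big)ds + G(X_T, m_T, p_T)\Big].
\end{align*}

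Next I would insert the Fenchel inequality. By definition of $H$ as the Legendre transform of $L$ in the control variable, we have $-D\phi_s(X_s)\cdot\alpha_s - H(X_s, D\phi_s(X_s), p_s) \le L(X_s, \alpha_s, p_s)$ for every $\alpha$, with equality precisely when $\alpha_s = -H_\xi(X_s, D\phi_s(X_s), p_s)$, which is the choice defining $\alpha^*$ and $X^*$. This immediately yields $\E[\phi_0(Z)] \le \E[\int_0^T (L(X_s,\alpha_s,p_s)+F(X_s,m_s,p_s))\,ds + G(X_T,m_T,p_T)]$ for any admissible $\alpha$, with equality along $(\alpha^*, X^*)$ — the existence and uniqueness of the SDE for $X^*$ follows from the Lipschitz bound on $H_\xi$ coming from the $C^2$ bound on $H$ on bounded sets together with the linear growth in $\xi$ and the uniform bound $\|D\phi_t\|_\infty \le C$. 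This gives the first equality and the inequality (without the penalization term) in the statement.

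The one genuinely quantitative step — and the main point of the result — is to upgrade the Fenchel inequality to a \emph{strict} gap of order $|\alpha_s + H_\xi(X_s, D\phi_s(X_s), p_s)|^2$. This uses uniform convexity of $\xi\mapsto H_i(x,\xi)$ from \eqref{A:MAINHamiltonian}, which transfers to $L(x,\cdot,p)$ being uniformly \emph{strongly convex} (the Legendre transform of a uniformly convex function with bounded-below Hessian on the relevant range is uniformly convex with a Hessian bounded above, and vice versa — one must check the constant $C^{-1}$ is uniform over the relevant compact set of $(x,\xi)$, which follows from the locally uniform $C^2$ bounds on $H$ and the a priori $L^\infty$ bound on $D\phi_t$). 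Concretely, writing $\xi_s = D\phi_s(X_s)$, strong convexity of $p\mapsto L$ in the control gives the second-order Taylor/Fenchel refinement
\begin{align*}
L(X_s,\alpha_s,p_s) + D\phi_s(X_s)\cdot\alpha_s + H(X_s, D\phi_s(X_s),p_s) \ge C^{-1}\, |\alpha_s + H_\xi(X_s,D\phi_s(X_s),p_s)|^2,
\end{align*}
valid because $\alpha^*_s = -H_\xi(X_s,\xi_s,p_s)$ is the unique minimizer of $a\mapsto L(X_s,a,p_s)+ \xi_s\cdot a$ and that function is strongly convex. Integrating this over $[0,T]$ and taking expectations, and combining with the identity for $\E[\phi_0(Z)]$ derived above, yields the displayed three-line chain. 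The subtlety to watch is that the strong-convexity constant must be controlled uniformly in $s,\omega$: this is where one really leans on the fact, established in Theorem \ref{thmMFGintro}, that $\|D\phi_s\|_\infty \le C$ deterministically, so that the argument $(X_s, D\phi_s(X_s))$ stays in a fixed region on which the $C^2$-bounds of \eqref{A:MAINHamiltonian} apply with $R$ fixed; one then invokes hypothesis \eqref{A:MAINHamiltonian0} to ensure $H$ (and hence $L$) does not degenerate, so the constant $C$ in the penalization can be taken deterministic. The It\^o step itself, while requiring some care regarding integrability of the martingale-measure term $dM_t$, is routine given \eqref{addregu}.
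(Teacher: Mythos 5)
Your proposal has the right skeleton and all the key mechanisms (verification via It\^o, cancellation of the Laplacians, Fenchel duality, strong convexity of $a\mapsto L(x,a,p)+\xi\cdot a$ giving the quadratic defect, uniform constants from $\|D\phi_t\|_\infty\le C$), and the final chain of equalities/inequalities is exactly what the paper establishes. But the one step you dismiss as ``routine'' is precisely the step the paper takes pains to circumvent, and the gap is real.

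You apply It\^o's formula directly to $t\mapsto \phi_t(X_t)$ where $\phi_t$ is a random field whose time evolution contains the martingale term $dM_t(x)$. This is an It\^o--Wentzell-type expansion, not plain It\^o, and the standard versions of It\^o--Wentzell (Kunita, etc.) require substantial spatial smoothness of the martingale part of the field. Here \eqref{addregu} only gives $\|M_t\|_\infty\le C$, with no control on $DM_t$ or $D^2M_t$; the $C^{1,\alpha}$ bounds you cite are for $\phi_t$, not for $M_t$. You are implicitly using that the cross-variation term vanishes because $M$ (driven by $p$ on $\Omega$) and $B$ (on $\Omega^1$) are independent — which is true and is the key structural fact — but even granting that, one still has to justify that $\int_0^t dM_s(X_s)$ is a well-defined mean-zero martingale when the only spatial regularity on $M$ is $L^\infty$, and that the pointwise It\^o expansion holds. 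That justification is not free, and asserting it from \eqref{addregu} alone is the weak point.

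The paper's proof avoids this entirely by discretizing time: it sets $(m^n,p^n)$ piecewise constant on the grid $t^n_k=kT/2^n$, solves the HJ equation for the associated finite filtration $(\mathcal F^n_t)$, and observes that on each open interval $(t^n_k,t^n_{k+1})$ the martingale $M^n$ is constant, so $(\phi^n,M^n)$ is a \emph{classical} parabolic solution there. The It\^o expansion of $\phi^n_t(X_t)$ on each interval is then the textbook It\^o formula for a deterministic (given $\mathcal F^n_{t^n_k}$) $C^{1,2}$ function, the jumps at grid points are handled by the conditional-expectation relation $\phi^n_{(t^n_{k+1})^-}=\E[\phi^n_{t^n_{k+1}}|\mathcal F^n_{t^n_k}]$, and one sums and passes $n\to\infty$. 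The Fenchel/strong-convexity step is applied in the discretized picture, identically to what you wrote. So: same verification structure, same convexity input, but the paper replaces your one ``routine'' It\^o--Wentzell application with a time-discretization plus limiting argument precisely because that application is not routine at the available regularity. If you want to keep your more direct route, you would need to either prove a suitable It\^o--Wentzell formula for fields with $L^\infty$ martingale parts under the independence structure at hand, or reduce to the paper's approximation.
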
 

\begin{proof} The proof is standard and we only sketch it.  As the probability space is fixed in the proof, we set $\E[\cdots]= \E^{\P\otimes \P^1}[\cdots]$. We discretize time, setting   $t^n_k= kT/2^n$ for $k\in \{0, \dots, 2^n\}$ and 
$$
(m^n_t,p^n_t)= (m^n_{t^n_k},p^n_{t^n_k})\qquad \text{on}\; [t^n_k, t^n_{k+1})
$$
Then we consider the solution $(\phi^n,M^n)$ to 
$$
 \left\{\begin{array}{l}
d  \phi^n_t(x) = \left\{  -\Delta  \phi^n_t (x)+H(x,D \phi^n_t(x),p^n_t)-F(x,m^n_t,p^n_t)\right\}dt +d M^n_t(x)\qquad \text{in} \; (0,T)\times \R^d,\\
\phi^n_T(x)= G(x,m^n_T,  p_T) \qquad \text{in} \; \R^d,
\end{array}\right.
$$
adapted to the complete filtration $(\mathcal F^n_t)$ generated by $(m^n,p^n)$. Then $M^n$ is constant on each interval $(t^n_k, t^n_{k+1})$ and thus  $(\phi^n,M^n)$ is a classical solution to
$$
 \left\{\begin{array}{l}
\partial_t  \phi^n_t(x) = \left\{ - \Delta  \phi^n_t (x)+H(x,D \phi^n_t(x),p^n_t)-F(x,m^n_t,p^n_t)\right\}dt \qquad \text{in} \; (t^n_k, t^n_{k+1})\times \R^d,\\
\phi^n_{(t^n_{k+1})^-}(x)= \E\left[\phi^n_{(t^n_{k+1})}(x)\ |\ \mathcal F^n_{t^n_k}\right]  \qquad \text{in} \; \R^d, \; k<2^n,\\
\phi^n_T(x)= G(x,m_T,p_T)  \qquad \text{in} \; \R^d.
\end{array}\right.
$$
Given a control $\alpha$  adapted to the filtration generated by $(p,m,B)$ and $X_t= Z+\int_0^t \alpha_sds+   \sqrt{2}B_t$, we can compute on each time interval 
\begin{align*}
& \phi^n_{(t^n_{k+1})^-}(X_{t^n_{k+1}}) = 
\phi^n_{t^n_{k}}(X_{t^n_{k}})+\int_{t^n_k}^{t^n_{k+1}} (\partial_t  \phi^n_t(X_t) + \Delta \phi^n(X_t)+ \alpha_t\cdot D\phi^n_t(X_t))dt + \int_{t^n_k}^{t^n_{k+1}}
D\phi^n_t(X_t)\cdot dB_t\\ 
& = \phi^n_{t^n_{k}}(X_{t^n_{k}})+\int_{t^n_k}^{t^n_{k+1}} (H(X_t,D \phi^n_t(X_t),p^n_t)-F(X_t,m^n_t,p^n_t)+ \alpha_t\cdot D\phi^n_t(X_t))dt + \int_{t^n_k}^{t^n_{k+1}}
D\phi^n_t(X_t)\cdot dB_t\\
& \geq \phi^n_{t^n_{k}}(X_{t^n_{k}})+\int_{t^n_k}^{t^n_{k+1}} (-L(X_t,\alpha_t,p^n_t)-F(X_t,m^n_t,p^n_t)+C^{-1} |\alpha_t+H_\xi(X_t, D\phi_t^n(X_t),p_t^n)|^2)dt  + \int_{t^n_k}^{t^n_{k+1}}
D\phi^n_t(X_t)\cdot dB_t, 
\end{align*}
with an equality only if $\alpha_t= -H_\xi(X_t,D \phi^n_t(X_t),p^n_t)$ for a.e. $t$ and a.s.. Taking expectation and summing over $k$ gives 
$$
\E\left[ G(X_T, m_T,p_T)\right] \geq \E\left[ \phi_0(Z)\right] -\E\left[\int_0^T (L(X_t,\alpha_t, p^n_t)+F(X_t, m^n_t,p^n_t)+C^{-1} |\alpha_t+H_\xi(X_t, D\phi_t^n(X_t),p_t^n)|^2)dt\right], 
$$
with an equality only if $X=X^n$ where $X^n_t= Z-\int_0^t H_\xi(X^n_s,D \phi^n_s(X^n_s),p^n_s)ds +B_t$ and $\alpha=\alpha^n= -H_\xi(X^n,D \phi^n(X^n),p^n)$. Letting $n\to\infty$ gives the result. 
\end{proof}

\subsection{Formulation of the Stackelberg problem} \label{subsec:reform}

We now come back to our original Stackelberg problem. Let us recall that $p^0\in \Delta(I)$ is the initial belief on the random variable ${\bf i}$ shared by the small players: ${\bf i}=i$ with probability $p^0_i$. \\

\noindent {\bf Admissible control of the major player.}  We denote by $\mathcal U^0([t_0,T])$ the set  of  measurable maps $u^0:[t_0,T]\to U^0$, endowed with the $L^1$-distance: 
$$
d(u^0, v^0)= \int_{t_0}^T d^0(u^0_s,v^0_s)ds \qquad \forall u^0,v^0\in \mathcal U^0([t_0,T]).
$$
We abbreviate  $\mathcal U^0([t_0,T])$ into $\mathcal U^0$ when $t_0=0$ and endow $\mathcal U^0$ with its Borel $\sigma-$algebra $\mathcal B(\mathcal U^0)$. We denote by $u^0$ its canonical process and  by $(\mathcal F^0_t)_{t\in [0,T]}$  the filtration generated by the canonical process $t\to u^0_t$. Let $\Delta(\mathcal U^0)$  be the set of random controls for the major player, that is the set of  Borel probability measures on $\mathcal U^0$. An admissible strategy for the major player is an element ${\bf u}^0=({\bf u}_i^0)_{i=1, \dots, I}$ of $(\Delta(\mathcal U^0))^I$. The interpretation of such a strategy is that the major players can choose her control in function of the index ${\bf i}$ chosen by nature. However, in order to use her information, she also needs to hide it by randomizing it. Letting $\Omega^0 =\{1, \dots, I\}$, the probability ${\bf u}^0$ generates the probability $P^{{\bf u}^0}$ on $(\Omega^0\times \mathcal U^0, \mathcal B(\Omega^0\times \mathcal U^0))$ defined by 
$$
P^{{\bf u}^0}(\{i\}\times A^0\} = p^0_i {\bf u}_i^0(A)\qquad \forall i\in \{1,\dots, I\}, \; A \in \mathcal B(\mathcal U).
$$

\noindent {\bf The mean field game equilibria}.  Given an admissible control ${\bf u}^0=({\bf u}_i^0)_{i=1, \dots, I}\in (\Delta(\mathcal U^0))^I$ of the major player, the small players observe along the time a realization $s\to u^0_s$ of  this strategy and deduce information on the random variable ${\bf i}$ from this observation. Throughout this section we fix a filtered probability space $(\Omega^1, \mathbb F^1, \P^1, (\mathcal F^1_t))$ supporting a Brownian motion $B$ and a random variable $Z$ on $\R^d$ of law $\bar m_0$. We denote by $(\mathcal F^{{\bf u^0},1}_t)$ the completion of the filtration generated by Z and by the process $t\to (u^0_t, B_t)$ with respect to the probability $P^{{\bf u}^0}\otimes \P^1$ on $(\Omega^0\times \mathcal U\times \Omega^1, \mathcal B(\Omega^0\times \mathcal U)\otimes \mathbb F^1)$. 

\begin{Definition}\label{def.Nasheq} Given an admissible control ${\bf u}^0=({\bf u}_i^0)_{i=1, \dots, I}\in (\Delta(\mathcal U^0))^I$, an MFG equilibrium associated to ${\bf u}^0$ is a pair $(\alpha^{{\bf u}^0}, m^{{\bf u}^0})$ of processes on $(\Omega^0\times \mathcal U^0\times \Omega^1, \mathcal B( \Omega^0\times U^0)\otimes \mathcal F^1, P^{{\bf u}^0}\otimes \P^1,  (\mathcal F^{{\bf u^0},1}_t))$, where $\alpha^{{\bf u}^0}$ takes its values in $\R^d$, $m^{{\bf u}^0}$ in $\mathcal P_2(\R^d)$, and 
\begin{enumerate}
\item $\alpha^{{\bf u}^0}$ is optimal in the  control problem
$$
\inf_{\alpha} \E^{P^{{\bf u}^0}\otimes \P^1} \left[ \int_0^T (L_{\bf i}(X^\alpha_t,\alpha_t)+F_{{\bf i}}(X^\alpha_t, m^{{\bf u}^0}_t))dt+G_{\bf i}(X^\alpha_T, m^{{\bf u}^0}_T)\right]
$$
where the infimum is taken over all $\R^d-$valued, $(\mathcal F^{{\bf u^0},1}_t)-$adapted controls $\alpha$ and 
$$
X^\alpha_t= Z+\int_0^t \alpha_sds +\sqrt{2}B_t, \qquad t\in [0,T],
$$

\item for any $t\in [0,T]$, $m^{{\bf u}^0}_t$ is the conditional law of $X^{\alpha^{{\bf u}^0}}_t$ given $\sigma(\{s\to {\bf u}^0_s, \; s\leq t\})$. 
\end{enumerate}
\end{Definition} 

\begin{Definition} [The major player's problem] \label{def.costoriginal} The problem of the major player then consists in minimizing over her admissible controls ${\bf u}^0\in (\Delta(\mathcal U^0))^I$ the cost 
 \be\label{pb.main}
J^0({\bf u}^0) = \sup_{(\alpha^{{\bf u}^0}, m^{{\bf u}^0})} \E^{P^{{\bf u}^0}\otimes \P^1}\bigl[ \int_0^T \sum_{i=1}^I p_i^0 L^0_i(t, u^0_s,  m_s^{{\bf u}^0})ds\Bigr], 
\ee
where the sup is taken over all MFG equilibria $(\alpha^{{\bf u}^0}, m^{{\bf u}^0})$ associated with ${\bf u}^0$. 
\end{Definition}

We take the supremum with respect to the MFG Nash equilibria  in order to handle the worst case for the informed player; however we will see that the MFG Nash equilibrium is unique (Corollary \ref{cor.mkzjnesr}), so that this supremum is actually useless. 

\section{The relaxed problem}

In this section we rewrite the problem for the major player as an optimal control problem over a suitable set of martingales with values in $\Delta(I)$. 

\subsection{Definition of the relaxed problem} \label{subsec.analysisMFG}

Fix ${\bf u}^0\in (\Delta(\mathcal U^0))^I$ and denote by $(\mathcal F^{{\bf u^0}}_t)$ the completion with respect to $P^{{\bf u}^0}$ of the filtration generated by the canonical process $t\to u^0_t$ over $(\Omega^0\times \mathcal U, \mathcal B(\Omega^0\times \mathcal U))$. Knowing ${\bf u}^0$ and observing the realization ${\bf u}_{\bf i}^0$, the small players have access to the  martingale process (with values in the simplex $\Delta(I)$)
\be\label{defpu0}
{\bf p}^{{\bf u}^0}_t = \E^{{\bf u^0}} \left[ e_{{\bf i}}\ |\ \mathcal F^{{\bf u^0}}_t\right],
\ee
where $(e_i)$ is  the canonical basis on $\R^I$. Note that $({\bf p}^{{\bf u}^0}_t)$ is a $(\mathcal F^{{\bf u^0}}_t)-$martingale; we always consider its c\`{a}dl\`{a}g version. The next lemma shows how to rewrite the cost of the small players in terms of $({\bf p}^{{\bf u}^0}_t)$. 

\begin{Lemma}\label{lem.Eu0vsEP} Fix ${\bf u}^0\in (\Delta(\mathcal U^0))^I$ and let $(\Omega^1, \mathcal F^1, \P^1, (\mathcal F^1_t))$ a filtered probability space  supporting a Brownian motion $B$ and a random variable $Z$ on $\R^d$ of law $\bar m_0$ ($B$ and $Z$ being independent).  Let $(m_t)$  be a random distribution of the players, i.e., a $\mathcal P_2(\R^d)-$valued  and $(\mathcal F^{{\bf u^0}}_t)-$adapted process. Then, for any  $\R^d-$valued, $(\mathcal F^{{\bf u^0},1}_t)-$adapted control $\alpha$, and if 
$$
X^\alpha_t= Z+\int_0^t \alpha_sds +\sqrt{2}B_t, \qquad t\in [0,T], 
$$
then
\begin{align*}
&  \E^{P^{{\bf u}^0}\otimes \P^1}\left[ \int_0^T (L_{{\bf i}}(X^\alpha_t, \alpha_t)+F_{{\bf i}}(X^\alpha_t, {\bf m}_t) )dt+ G_{{\bf i}}(X^\alpha_T, {\bf m}_T)\right] \\
& \qquad =  \E^{P^{{\bf u}^0}\otimes \P^1}\left[ \int_0^T L(X^\alpha_t, \alpha_t,{\bf p}^{{\bf u}^0}_t)+F(X^\alpha_t, {\bf m}_t, {\bf p}^{{\bf u}^0}_t)  dt + G(X^\alpha_T, {\bf m}_T,{\bf p}^{{\bf u}^0}_T)\right] ,
\end{align*}
where the maps $H$, $F$ and $G$ are defined in  \eqref{def.LFGp}.
\end{Lemma}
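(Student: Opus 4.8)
The plan is to prove the identity by conditioning on the filtration $(\mathcal F^{{\bf u}^0}_t)$ generated by the major player's control, exploiting the tower property together with the definition \eqref{defpu0} of the process ${\bf p}^{{\bf u}^0}$. The key observation is that, under $P^{{\bf u}^0}\otimes \P^1$, the index ${\bf i}$ is a random variable taking the value $i$ with conditional probability $({\bf p}^{{\bf u}^0}_t)_i = \E^{{\bf u}^0}[\ind_{\{{\bf i}=i\}}\mid \mathcal F^{{\bf u}^0}_t]$, whereas the processes $Z$, $B$, $\alpha$, $X^\alpha$ and the random distribution ${\bf m}$ are all adapted to the enlarged filtration $(\mathcal F^{{\bf u}^0,1}_t)$, which is generated by $Z$, $B$ and the control path $t\mapsto u^0_t$ only — crucially, not by ${\bf i}$ directly. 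Hence, at each fixed time $t$, the pair $(X^\alpha_t,{\bf m}_t)$ is $\mathcal F^{{\bf u}^0,1}_t$-measurable while the conditional law of ${\bf i}$ given a $\sigma$-algebra containing $\mathcal F^{{\bf u}^0}_t$ but independent of the extra randomness in $(\Omega^1,\mathcal F^1,\P^1)$ is exactly ${\bf p}^{{\bf u}^0}_t$.

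Concretely, I would argue term by term. For the running cost involving $F$: fix $t\in[0,T]$ and compute $\E^{P^{{\bf u}^0}\otimes\P^1}[F_{{\bf i}}(X^\alpha_t,{\bf m}_t)]$ by first conditioning on $\mathcal G_t := \sigma(\mathcal F^{{\bf u}^0,1}_t)$. Since $X^\alpha_t$ and ${\bf m}_t$ are $\mathcal G_t$-measurable,
\[
\E^{P^{{\bf u}^0}\otimes\P^1}\big[F_{{\bf i}}(X^\alpha_t,{\bf m}_t)\mid \mathcal G_t\big]
= \sum_{i=1}^I F_i(X^\alpha_t,{\bf m}_t)\, P^{{\bf u}^0}\otimes\P^1\big({\bf i}=i\mid \mathcal G_t\big).
\]
The point is then to identify $P^{{\bf u}^0}\otimes\P^1({\bf i}=i\mid \mathcal G_t)$ with $({\bf p}^{{\bf u}^0}_t)_i$. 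This follows because the $\Omega^1$-components $Z,B$ are independent of $({\bf i},u^0)$ under $P^{{\bf u}^0}\otimes\P^1$, so conditioning on $\mathcal G_t$ — which adds to $\mathcal F^{{\bf u}^0}_t$ only information coming from $Z$ and $B$ — does not change the conditional law of ${\bf i}$ beyond what $\mathcal F^{{\bf u}^0}_t$ already carries; formally, $P^{{\bf u}^0}\otimes\P^1({\bf i}=i\mid \mathcal G_t) = P^{{\bf u}^0}({\bf i}=i\mid \mathcal F^{{\bf u}^0}_t) = ({\bf p}^{{\bf u}^0}_t)_i$ a.s. By the definition \eqref{def.LFGp} of $F$, the right-hand side is precisely $F(X^\alpha_t,{\bf m}_t,{\bf p}^{{\bf u}^0}_t)$. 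Taking expectations, integrating in $t$ (using Fubini, justified by boundedness of $F_i$ from (H2)), and treating the $L$-term identically pointwise in $t$ and the terminal $G$-term at $t=T$ in the same way, gives the claimed equality.

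The one genuinely delicate point is the measure-theoretic justification that conditioning on the Brownian- and initial-condition-enriched filtration $\mathcal G_t=\mathcal F^{{\bf u}^0,1}_t$ does not refine the conditional law of ${\bf i}$ compared to $\mathcal F^{{\bf u}^0}_t$ — i.e. that $({\bf i})$ and $(Z,B)$ remain conditionally independent given $\mathcal F^{{\bf u}^0}_t$ under $P^{{\bf u}^0}\otimes\P^1$. This is where the product structure of the probability $P^{{\bf u}^0}\otimes\P^1$ and the independence of $Z,B$ from $({\bf i},{\bf u}^0)$ (stated in the model) are used essentially: since the law factorizes and $\mathcal F^{{\bf u}^0}_t\subset \mathcal B(\Omega^0\times\mathcal U^0)$ while $\sigma(Z,(B_s)_{s\le t})\subset \mathcal F^1$, a standard argument (approximating $F_i(X^\alpha_t,{\bf m}_t)$ by products of $\mathcal F^{{\bf u}^0}_t$-measurable and $\mathcal F^1$-measurable functions, or invoking a $\pi$-$\lambda$/monotone class argument on indicator functions) yields the identity. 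I would state this as a short lemma-internal remark rather than belabor it. Everything else — linearity, Fubini, the tower property — is routine given the boundedness assumptions in (H1)–(H2), so I expect the writeup to be brief.
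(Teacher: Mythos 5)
Your proposal is correct and follows essentially the same route as the paper: condition on $\mathcal F^{{\bf u}^0,1}_t$, use the independence of $(\mathcal F^1_t)$ from $(\mathcal F^{{\bf u}^0}_t)$ to identify $\E[e_{\bf i}\mid\mathcal F^{{\bf u}^0,1}_t]=\E[e_{\bf i}\mid\mathcal F^{{\bf u}^0}_t]={\bf p}^{{\bf u}^0}_t$, pull out the $\mathcal F^{{\bf u}^0,1}_t$-measurable quantities, and conclude via Fubini and the definitions in \eqref{def.LFGp}. The only difference is that you devote more space to the measure-theoretic justification of the conditional-independence step, which the paper states in a single line.
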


\begin{proof}
Indeed, as $(\mathcal F^1_t)$ is independent of $(\mathcal F^{\bf u^0}_t)$,  
$$
{\bf p}^{{\bf u}^0}_t= \E\left[e_{\bf i}| \mathcal F^{\bf u^0}_t\right] = \E\left[e_{\bf i}| \mathcal F^{\bf u^0,1}_t\right], 
$$
thus we have
\begin{align*}
& \E\left[ \int_0^T (L_{{\bf i}}(X^\alpha_t, \alpha_t)+F_{{\bf i}}(X^\alpha_t, {\bf m}_t) )dt+ G_{{\bf i}}(X^\alpha_T, {\bf m}_T)\right] \\ 
&  = \int_0^T \E\left[ \E\left[  e_{\bf i}\cdot  ( (L_i(X^\alpha_t, \alpha_t)+F_i(X^\alpha_t, {\bf m}_t) ))_{i=1, \dots, I}\ |\ \mathcal F^{\bf u^0,1}_t \right] \right] dt
+ \E\left[ \E\left[  e_{\bf i}\cdot (G_i(X^\alpha_T, {\bf m}_T))_{i=1, \dots, I}\ |\ \mathcal F^{\bf u^0,1}_T \right]\right]  \\
&    = \int_0^T \E\left[  {\bf p}^{{\bf u}^0}_t \cdot  ( (L_i(X^\alpha_t, \alpha_t)+F_i(X^\alpha_t, {\bf m}_t) ))_{i=1, \dots, I} \right] dt
+ \E\left[  {\bf p}^{{\bf u}^0}_T \cdot (G_i(X^\alpha_T, {\bf m}_T))_{i=1, \dots, I}\right]  \\
&    = \int_0^T \E\left[  L(X^\alpha_t, \alpha_t,{\bf p}^{{\bf u}^0}_t)+F(X^\alpha_t, {\bf m}_t, {\bf p}^{{\bf u}^0}_t) \right] dt
+ \E\left[  G(X^\alpha_T, {\bf m}_T,{\bf p}^{{\bf u}^0}_T)\right]. 
\end{align*}
\end{proof}

\begin{Corollary}\label{cor.mkzjnesr} Under the notation and assumption of Lemma \ref{lem.Eu0vsEP}, there is a unique MFG equilibrium  $(\alpha^{{\bf u}^0}, m^{{\bf u}^0})$ associated to  ${\bf u}^0$ which is given by 
\be\label{defalphau0}
\alpha^{{\bf u}^0}_t= -H_\xi (X^*_t, D\phi_t(X^*_t),{\bf p}^{{\bf u}^0}_t)
\ee
with $X^*_t= Z-\int_0^t H_\xi (X^*_s, D\phi_s(X^*_s),{\bf p}^{{\bf u}^0}_s)ds+\sqrt{2}B_t$ and $m^{{\bf u}^0}=m$, where  $(\phi,m,M)$ is the unique solution to the MFG system \eqref{eq.mfgintro} associated to $({\bf p}^{{\bf u}^0})$ on $(\Omega^0\times \mathcal U^0, \mathcal B(\Omega^0\times \mathcal U^0), P^{{\bf u}^0},  (\mathcal F^{{\bf u^0}}_t))$. 
\end{Corollary}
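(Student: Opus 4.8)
The plan is to show that the MFG equilibrium associated to ${\bf u}^0$ is exactly characterized by the MFG system \eqref{eq.mfgintro} driven by the martingale $({\bf p}^{{\bf u}^0}_t)$, and that this characterization forces uniqueness. First I would invoke Theorem \ref{thmMFGintro} with $p=({\bf p}^{{\bf u}^0}_t)$ and $(\mathcal F_t)=(\mathcal F^{{\bf u^0}}_t)$ to obtain the unique solution $(\phi,m,M)$ to \eqref{eq.mfgintro} on $(\Omega^0\times \mathcal U^0, P^{{\bf u}^0}, (\mathcal F^{{\bf u^0}}_t))$. One then lifts $(\phi,m)$ to the product space $(\Omega^0\times \mathcal U^0\times \Omega^1, P^{{\bf u}^0}\otimes\P^1, (\mathcal F^{{\bf u^0},1}_t))$; since $(\mathcal F^1_t)$ is independent of $(\mathcal F^{{\bf u^0}}_t)$, the process $\phi$ remains adapted and ${\bf p}^{{\bf u}^0}_t=\E[e_{\bf i}\mid\mathcal F^{{\bf u^0},1}_t]$ (as already noted in the proof of Lemma \ref{lem.Eu0vsEP}).

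Next I would verify that the pair $(\alpha^{{\bf u}^0},m)$ defined by \eqref{defalphau0} is an MFG equilibrium in the sense of Definition \ref{def.Nasheq}. For the optimality condition (item 1), I would combine Lemma \ref{lem.Eu0vsEP}, which rewrites the small players' cost with the index-averaged data $L,F,G$ and the belief ${\bf p}^{{\bf u}^0}$, with Proposition \ref{prop.HJHJHJ} applied on the product space with $B$ the given Brownian motion, $Z$ the given initial condition, and $(m_t,p_t)=(m_t,{\bf p}^{{\bf u}^0}_t)$: Proposition \ref{prop.HJHJHJ} says precisely that $\alpha^*=-H_\xi(X^*,D\phi(X^*),{\bf p}^{{\bf u}^0})$ minimizes $\E[\int_0^T(L(X_s,\alpha_s,{\bf p}^{{\bf u}^0}_s)+F(X_s,m_s,{\bf p}^{{\bf u}^0}_s))ds+G(X_T,m_T,{\bf p}^{{\bf u}^0}_T)]$ over all $(p,m,B)$-adapted controls, which by Lemma \ref{lem.Eu0vsEP} is the same as minimizing the original cost $\E[\int_0^T(L_{\bf i}(X_t,\alpha_t)+F_{\bf i}(X_t,m_t))dt+G_{\bf i}(X_T,m_T)]$. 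For the consistency condition (item 2), I would observe that by construction $m$ solves the Fokker--Planck equation \eqref{eq.FPintro} with the same drift $-H_\xi(\cdot,D\phi,{\bf p}^{{\bf u}^0})$ that appears in the SDE for $X^*$, and that $m$ is $(\mathcal F^{{\bf u^0}}_t)$-adapted; a conditioning argument (using independence of $B,Z$ from ${\bf u}^0$ and uniqueness for the Fokker--Planck equation with bounded coefficients) then identifies $m_t$ with the conditional law of $X^*_t$ given $\sigma(u^0_s,\ s\le t)$.

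For uniqueness, suppose $(\tilde\alpha,\tilde m)$ is any MFG equilibrium associated to ${\bf u}^0$. Using Lemma \ref{lem.Eu0vsEP} the small players' problem for the fixed flow $\tilde m$ is a standard stochastic control problem with data $L(\cdot,\cdot,{\bf p}^{{\bf u}^0}),F(\cdot,\tilde m,{\bf p}^{{\bf u}^0}),G(\cdot,\tilde m,{\bf p}^{{\bf u}^0})$; its value function solves the backward HJ equation \eqref{HJHJHJ} with $m=\tilde m$, which by Proposition \ref{prop.HJHJHJ} has a unique solution $\tilde\phi$, and the optimal control is $\tilde\alpha_t=-H_\xi(\tilde X_t,D\tilde\phi_t(\tilde X_t),{\bf p}^{{\bf u}^0}_t)$ (uniqueness of the optimizer follows from the strict inequality in Proposition \ref{prop.HJHJHJ} forcing $\alpha_t+H_\xi(X_t,D\tilde\phi_t(X_t),{\bf p}^{{\bf u}^0}_t)=0$ a.e.); hence $\tilde m$ solves the Fokker--Planck equation of \eqref{eq.mfgintro}, so $(\tilde\phi,\tilde m,\tilde M)$ is a solution of the MFG system \eqref{eq.mfgintro} associated to ${\bf p}^{{\bf u}^0}$, and Theorem \ref{thmMFGintro} gives $(\tilde\phi,\tilde m,\tilde M)=(\phi,m,M)$, whence $\tilde\alpha=\alpha^{{\bf u}^0}$. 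The main obstacle I anticipate is the bookkeeping in item 2: carefully matching filtrations and checking that the conditional-law characterization of $m_t$ is equivalent to $m$ solving the Fokker--Planck equation driven by ${\bf p}^{{\bf u}^0}$ on the smaller filtration $(\mathcal F^{{\bf u^0}}_t)$ — this requires the independence of $(B,Z)$ from $({\bf i},{\bf u}^0)$ and a uniqueness argument for the parabolic equation, and is the only place where one must be genuinely attentive rather than invoke a quoted result.
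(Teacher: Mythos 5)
Your proposal is correct and follows essentially the same route as the paper: rewrite the small players' cost via Lemma \ref{lem.Eu0vsEP}, apply Proposition \ref{prop.HJHJHJ} (with its strict gap term) to obtain optimality and uniqueness of the control given $m$, use It\^{o}'s formula plus Fokker--Planck uniqueness to identify the conditional law with $m$, and for the converse show that any MFG equilibrium produces a solution of the MFG system \eqref{eq.mfgintro} and then invoke uniqueness from Theorem \ref{thmMFGintro}. The paper's proof handles the filtration bookkeeping in item 2 exactly as you anticipate, via It\^{o}'s formula and uniqueness of the Fokker--Planck equation, so your flagged ``obstacle'' is resolved along the lines you describe.
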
 

A consequence of the corollary is that  the ``sup'' in Definition \ref{def.costoriginal} is not needed. 

\begin{proof} Let $(\phi,m,M)$, $m^{{\bf u}^0}=m$,  $X^*$ and $\alpha^{{\bf u}^0}$ be as in the lemma. In view of Lemma  \ref{lem.Eu0vsEP}, we have, for any control $\alpha$ (and $X^\alpha$ defined as in that Lemma), 
\begin{align*}
&  \E^{P^{{\bf u}^0}\otimes \P^1}\left[ \int_0^T (L_{{\bf i}}(X^\alpha_t, \alpha_t)+F_{{\bf i}}(X^\alpha_t, {\bf m}^{{\bf u}^0}_t) )dt+ G_{{\bf i}}(X^\alpha_T, {\bf m}^{{\bf u}^0}_T)\right] \\
& \qquad =  \E^{P^{{\bf u}^0}\otimes \P^1}\left[ \int_0^T L(X^\alpha_t, \alpha_t,{\bf p}^{{\bf u}^0}_t)+F(X^\alpha_t, {\bf m}^{{\bf u}^0}_t, {\bf p}^{{\bf u}^0}_t)  dt + G(X^\alpha_T, {\bf m}^{{\bf u}^0}_T,{\bf p}^{{\bf u}^0}_T)\right] .
\end{align*}
Recalling Proposition \ref{prop.HJHJHJ}, we obtain 
\begin{align*}
&\E^{P^{{\bf u}^0}\otimes \P^1}\left[ \tilde  \phi_0(Z)\right] = \E^{P^{{\bf u}^0}\otimes \P^1}\left[ \int_0^T L(X^{*}_t,  \alpha^{{\bf u}^0}_t,{\bf p}^{{\bf u}^0}_t)+F(X^{*}_t, m^{{\bf u}^0}_t, {\bf p}^{{\bf u}^0}_t)  dt + G(X^{*}_T, m^{{\bf u}^0}_T,{\bf p}^{{\bf u}^0}_T)\right]\\
& \qquad \leq \E^{P^{{\bf u}^0}\otimes \P^1}\left[ \int_0^T L(X^\alpha_t, \alpha_t,{\bf p}^{{\bf u}^0}_t)+F(X^\alpha_t, m^{{\bf u}^0}_t, {\bf p}^{{\bf u}^0}_t)  dt + G(X^\alpha_T, m^{{\bf u}^0}_T,{\bf p}^{{\bf u}^0}_T)\right] \\
& \qquad \qquad  - C^{-1} \E^{P^{{\bf u}^0}\otimes \P^1}\left[\int_0^T | \alpha_t-\alpha^{{\bf u}^0}_t|^2dt\right].
\end{align*}
Therefore $\alpha^{{\bf u}^0}$ is optimal in the  control problem
$$
\inf_{\alpha} \E^{P^{{\bf u}^0}\otimes \P^1} \left[ \int_0^T (L_{\bf i}(X^\alpha_t,\alpha_t)+F_{{\bf i}}(X^\alpha_t, m^{{\bf u}^0}_t))dt+G_{\bf i}(X^\alpha_T, m^{{\bf u}^0}_T)\right]. 
$$
By It\^{o}'s formula and the definition of $X^{\alpha^{{\bf u}^0}}$, the conditional law of $X^{\alpha^{{\bf u}^0}}_t$ given $\sigma(\{s\to {\bf u}^0_s, \; s\leq t\})$ solves the Fokker-Planck equation in \eqref{eq.mfgintro}. By  uniqueness of the solution to this equation, we infer that it is equal to  $m^{{\bf u}^0}_t$. \bigskip

Conversely, let $(\alpha^*, m^*)$ be an MFG equilibrium  associated to  ${\bf u}^0$. Let $(\phi^*, M^*)$ be the unique solution to the HJ equation on $(\Omega^0\times \mathcal U^0, \mathcal B(\Omega^0\times \mathcal U^0), P^{{\bf u}^0},  (\mathcal F^{{\bf u^0}}_t))$ (see \cite{CSS22})
$$
\left\{\begin{array}{l}
d  \phi^*_t(x) = \left\{  \Delta   \phi^*_t (x)-H(x,D \phi^*_t(x),{\bf p}^{{\bf u}^0}_t)+ F(x,m^*_t,{\bf p}^{{\bf u}^0}_t)\right\}dt +d  M^*_t(x)\qquad \text{in} \; (0,T)\times \R^d,\\
 \phi^*_T(x)= G(x,m^*_T,  {\bf p}^{{\bf u}^0}_T) \qquad \text{in} \; \R^d.
\end{array}\right.
$$
Proposition \ref{prop.HJHJHJ} states that $\bar \alpha_t= -H_\xi(\bar X_t,D \phi^*_t(\bar X_t),{\bf p}^{{\bf u}^0}_t)$, where $\bar X$ solves $\bar X_t= Z-\int_0^t H_\xi(\bar X_s,D \phi^*_s(\bar X_s),{\bf p}^{{\bf u}^0}_s)ds+\sqrt{2}B_t$, is the unique minimizer of the problem 
\begin{align*}
& \inf_\alpha \E^{P^{{\bf u}^0}\otimes \P^1} \left[ \int_0^T (L(X^\alpha_t,\alpha_t, {\bf p}^{{\bf u}^0}_t)+F(X^\alpha_t, m^*_t), {\bf p}^{{\bf u}^0}_t)dt+G(X^\alpha_T, m^*_T, {\bf p}^{{\bf u}^0}_T)\right]\\
& = \inf_\alpha \E^{P^{{\bf u}^0}\otimes \P^1} \left[ \int_0^T (L_{\bf i}(X^\alpha_t,\alpha_t)+F_{{\bf i}}(X^\alpha_t, m^*_t))dt+G_{\bf i}(X^\alpha_T, m^*_T)\right].
\end{align*}
As $(\alpha^*, m^*)$ is an MFG equilibrium  associated to  ${\bf u}^0$, this implies that $\alpha^*_t= \bar \alpha_t$ a.s. and for a.e. $t$,  and therefore that $X^{\alpha^*}=\bar X$. On the other hand, as $m^*_t$ is the conditional law of $X^{\alpha^*}_t=\bar X_t$ given $\sigma(\{s\to {\bf u}^0_s, \; s\leq t\})$, It\^{o}'s formula implies that $m^*$ solves 
$$
\left\{\begin{array}{l}
dm^*_t(x) = \left\{ \Delta m^*_t(x) +{\rm div}(H_\xi(x,D \phi^*_t(x), {\bf p}^{{\bf u}^0}_t) m^*_t(x))\right\}dt \qquad \text{in} \; (0,T)\times \R^d \\
 m^*_0(x)=\bar m_0(x), \qquad   \text{in} \; \R^d
\end{array}\right.
$$
Therefore the triple $( \phi^*,m^*,M^*)$ is a solution to the MFG system  associated to $({\bf p}^{{\bf u}^0})$. By uniqueness we conclude that  $( \phi^*,m^*,M^*)=(\phi,m,M)$. This completes the proof. 
\end{proof}

Recall that $\Dt0$ is the set of c\`{a}dl\`{a}g functions from $\R$  to  $\D(I)$, endowed with the Meyer-Zheng topology.  Let $t\mapsto  p(t)$ be the coordinate mapping on $\Dt0$, $\mathcal G$ be the Borel $\sigma-$algebra on $\Dt0$ and  $({\cal G}_t)$ be the filtration generated  by $t\mapsto p(t)$. Given $p_0\in \Delta(I)$, we denote by $\M(p_0)$ the set of probability measures ${\bf P}$ on $\Dt0$ such that, under $\bf P$, $(\p(t),t\in\R)$ is a martingale and $p_t=p_0$ for $t\leq 0$. 
Finally for any measure ${\bf P}$ on $\Dt0$, we denote by ${\mathcal F}^{\bf P}$ the completion of the filtration $({\cal G}_t)$ with respect to ${\bf P}$ and by 
$\E^{{\bf P}}[\dots]$ the expectation with respect to ${\bf P}$.

\begin{Definition}[The relaxed problem]\label{def.relaxed} The relaxed problem is
$$
\min_{{\bf P}\in \M(p_0)} \bar J^0({\bf P})\qquad {\rm where} \qquad  \bar J^0({\bf P}):=\E^{\bf P}\bigl[ \int_0^T    \min_{u^0\in U^0} L^0(s, u^0, m^{\bf P}_s,p_s)ds\Bigr], 
$$
where $(\phi^{\bf P},m^{\bf P}, M^{\bf P})$ is the unique solution to the MFG system  \eqref{eq.mfgintro}  on $(\Dt0, \mathcal G, {\bf P}, ({\mathcal F}^{\bf P}_t))$ (in the sense of Definition \ref{def.MFGsystintro}):
\be\label{eq.MFGsyst}
\left\{\begin{array}{l}
d  \phi^{\bf P}_t(x) = \left\{  -\Delta  \phi^{\bf P}_t (x)+H(x,D \phi^{\bf P}_t(x),p_t)- F(x,m^{\bf P}_t,p_t)\right\}dt +d M^{\bf P}_t(x)\qquad \text{in} \; (0,T)\times \R^d\\
dm^{\bf P}_t(x) = \left\{ \Delta m^{\bf P}_t(x) +{\rm div}(H_\xi(x,D \phi^{\bf P}_t(x), p_t) m^{\bf P}_t(x))\right\}dt\; \text{in} \; (0,T)\times \R^d \\
 m^{\bf P}_0(x)=\bar m_0(x), \qquad \phi^{\bf P}_T(x)= G(x,m^{\bf P}_T,  p_T) \qquad \text{in} \; \R^d
\end{array}\right.
\ee
\end{Definition}

We now explain the link between the original problem for the major player (see Definition \ref{def.costoriginal}) and the relaxed problem (Definition \ref{def.relaxed}). 

\begin{Proposition} \label{prop.equiv} Let ${\bf u}^0\in (\Delta(\mathcal U^0))^I$, ${\bf p}^{{\bf u}^0}$   be given by \eqref{defpu0} and ${\bf P}$ be its law on $\Dt0$. 
Then 
$$
\bar J^0({\bf P})\leq J^0({\bf u}^0),
$$
where $J^0$ is defined in \eqref{pb.main}. 

Conversely,  given ${\bf P}\in \M(p_0)$, there exists a sequence $\bar u^{0,n}\in (\Delta (\mathcal U^0))^I$ such that 
$$
\lim_n J^0({\bf u}^{0,n})=\overline J^0({\bf P}).
$$
\end{Proposition}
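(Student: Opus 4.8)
The plan is to treat the two assertions separately, since they go in opposite directions. For the first inequality, the idea is purely bookkeeping: given ${\bf u}^0\in(\Delta(\mathcal U^0))^I$, Corollary~\ref{cor.mkzjnesr} tells us that the unique MFG equilibrium $(\alpha^{{\bf u}^0},m^{{\bf u}^0})$ is built from the solution $(\phi,m,M)$ of the MFG system driven by the martingale ${\bf p}^{{\bf u}^0}$, and $m^{{\bf u}^0}=m$. Now ${\bf P}$ is by definition the law of ${\bf p}^{{\bf u}^0}$ on $\Dt0$; since ${\bf p}^{{\bf u}^0}$ is a $(\mathcal F^{{\bf u}^0}_t)$-martingale with ${\bf p}^{{\bf u}^0}_0=p^0$, we have ${\bf P}\in\M(p_0)$. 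By the uniqueness in law part of Theorem~\ref{thmMFGintro}, the law of $(m^{{\bf u}^0},{\bf p}^{{\bf u}^0})$ equals the law of $(m^{\bf P},p)$ under ${\bf P}$. Therefore
$$
\bar J^0({\bf P}) = \E^{\bf P}\!\Bigl[\int_0^T \inf_{u^0\in U^0} L^0(s,u^0,m^{\bf P}_s,p_s)\,ds\Bigr]
= \E^{P^{{\bf u}^0}\otimes\P^1}\!\Bigl[\int_0^T \inf_{u^0\in U^0} L^0(s,u^0,m^{{\bf u}^0}_s,{\bf p}^{{\bf u}^0}_s)\,ds\Bigr].
$$
Comparing with $J^0({\bf u}^0)$, which involves $\sum_i p_i^0 L^0_i(t,u^0_s,m^{{\bf u}^0}_s) = L^0(s,u^0_s,{\bf p}^{{\bf u}^0}_s,m^{{\bf u}^0}_s)$ evaluated at the \emph{actual} control $u^0_s$ of the major player (and the ``sup'' over equilibria is superfluous by the Corollary), we pick up the pointwise inequality $\inf_{u^0} L^0(s,u^0,\cdots)\le L^0(s,u^0_s,\cdots)$, which gives $\bar J^0({\bf P})\le J^0({\bf u}^0)$ after taking expectations; one subtlety to check is that $L^0$ depends on ${\bf p}^{{\bf u}^0}_s$ rather than on $e_{\bf i}$, but this is exactly the same conditioning argument as in Lemma~\ref{lem.Eu0vsEP}, using that $L^0_i$ does not depend on the state of a small player.

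For the converse, the plan is an approximation/mimicking argument. Given ${\bf P}\in\M(p_0)$, let $(\phi^{\bf P},m^{\bf P},M^{\bf P})$ be the MFG solution on $(\Dt0,\mathcal G,{\bf P},(\mathcal F^{\bf P}_t))$, and let $s\mapsto \hat u^0(s)$ be a measurable selection of the argmin in $\inf_{u^0\in U^0}L^0(s,u^0,m^{\bf P}_s,p_s)$ (measurable selection is available since $U^0$ is compact and $L^0$ is continuous; this selection is a process adapted to $(\mathcal F^{\bf P}_t)$ since $m^{\bf P}$ and $p$ are). The difficulty is that this $\hat u^0$ is an $(\mathcal F^{\bf P}_t)$-adapted control, i.e. a \emph{function of the martingale $p$ itself}, whereas an admissible strategy for the major player is an element of $(\Delta(\mathcal U^0))^I$, i.e. a randomization indexed by ${\bf i}$, and the induced belief process ${\bf p}^{{\bf u}^0}$ is then a \emph{derived} object. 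So one must construct, from the abstract martingale $(\Dt0,{\bf P})$, a sequence ${\bf u}^{0,n}\in(\Delta(\mathcal U^0))^I$ whose induced belief martingales ${\bf p}^{{\bf u}^{0,n}}$ converge in law (on $\Dt0$, Meyer--Zheng) to ${\bf P}$, and simultaneously whose realized control processes $u^{0,n}$ are asymptotically the minimizing selection $\hat u^0$ along the corresponding $(m^{{\bf u}^{0,n}},{\bf p}^{{\bf u}^{0,n}})$. The natural route is: first discretize time on a grid $t^n_k=kT/2^n$ and the simplex via a finite $\Delta_n(I)$ (exactly as in the proof of Theorem~\ref{thmMFGintro}), so that on $[0,T]$ the martingale $p$ takes finitely many values and has finitely many jump times; then realize such a finite-state martingale as a belief process generated by a suitable ${\bf u}^0$: on each interval $[t^n_k,t^n_{k+1})$ split the elementary control set $\mathcal U^0$ into finitely many ``signals'' (this uses $(H1)$: $U^0$ is not a singleton, so $\mathcal U^0$ carries enough room to encode finitely many distinguishable messages), and choose ${\bf u}^{0,n}_i$ so that the conditional law of ${\bf i}$ given the observed signals up to $t^n_k$ equals the prescribed martingale value; a standard splitting lemma from repeated-games-with-incomplete-information (à la Aumann--Maschler, as cited in the paper) guarantees that any $\Delta(I)$-valued martingale adapted to a finite filtration can be so realized. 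Finally, on the ``spare'' coordinate of the control (or, more carefully, by partitioning $\mathcal U^0$ into a signalling part and a payoff-relevant part) one superimposes the minimizing selection $\hat u^{0,n}(s)$ computed from the discretized MFG solution.

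With this construction in hand, the convergence $J^0({\bf u}^{0,n})\to \bar J^0({\bf P})$ follows from two ingredients. First, stability of the MFG system: by Theorem~\ref{thmMFGintro} (uniqueness in law and the Cauchy estimates in its proof, inherited from \cite{CeSo22}), if the driving martingales ${\bf p}^{{\bf u}^{0,n}}$ converge in law to ${\bf P}$ on $\Dt0$, then $(m^{{\bf u}^{0,n}},{\bf p}^{{\bf u}^{0,n}})\Rightarrow(m^{\bf P},p)$ on $C^0([0,T],\mathcal P_2)\times\Dt0$; combined with continuity and boundedness of $L^0$ (assumption $(H1)$) and the fact that $u^{0,n}_s$ is chosen $\ep_n$-close to the minimizing selection, $\E^{{\bf u}^{0,n}}[\int_0^T L^0(s,u^{0,n}_s,{\bf p}^{{\bf u}^{0,n}}_s,m^{{\bf u}^{0,n}}_s)\,ds]\to \E^{\bf P}[\int_0^T \inf_{u^0}L^0(s,u^0,m^{\bf P}_s,p_s)\,ds]=\bar J^0({\bf P})$. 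Since $J^0({\bf u}^{0,n})$ is exactly that expectation (the sup over equilibria being trivial by Corollary~\ref{cor.mkzjnesr}), this is the claim; note also that the first part of the Proposition already gives $\bar J^0({\bf P}^n)\le J^0({\bf u}^{0,n})$ where ${\bf P}^n$ is the law of ${\bf p}^{{\bf u}^{0,n}}$, and $\bar J^0({\bf P}^n)\to\bar J^0({\bf P})$ by the same stability, so both inequalities pinch.

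I expect the main obstacle to be the mimicking construction: turning an abstract $\Delta(I)$-valued martingale on $\Dt0$ into the belief process genuinely \emph{generated} by a randomized control ${\bf u}^0\in(\Delta(\mathcal U^0))^I$, while keeping room in $\mathcal U^0$ to also run the cost-minimizing selection. The time/simplex discretization reduces this to the finite-state case, where a splitting-lemma argument works, but the bookkeeping — choosing the signalling partitions of $\mathcal U^0$, verifying that the resulting belief martingale has the right conditional laws, and controlling the Meyer--Zheng distance to ${\bf P}$ as $n\to\infty$ — is the delicate part. The MFG-stability input is comparatively routine once Theorem~\ref{thmMFGintro} is granted.
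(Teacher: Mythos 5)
Your first half is exactly the paper's argument: use Corollary~\ref{cor.mkzjnesr} to dispose of the sup over equilibria, rewrite $\sum_i p_i^0 L_i^0$ via the conditioning argument as $L^0(\cdot,{\bf p}^{{\bf u}^0}_s)$ (as in Lemma~\ref{lem.Eu0vsEP}), invoke the uniqueness in law from Theorem~\ref{thmMFGintro} to replace $(m^{{\bf u}^0},{\bf p}^{{\bf u}^0})$ by $(m^{\bf P},p)$, and then bound $L^0$ below by its infimum over $u^0$.

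For the converse, your strategy has the right skeleton (discretize time, encode the abstract martingale $p$ into the realized control path so that it becomes the belief process, superimpose the $\arg\min$ of $L^0$, and pass to the limit via the MFG stability coming from Theorem~\ref{thmMFGintro} and $(H1)$), but the encoding you propose differs from the paper's and is heavier than necessary. You discretize the simplex $\Delta(I)$, invoke an Aumann--Maschler splitting lemma to realize the resulting finite-state martingale, and then partition $U^0$ spatially into signalling cells versus a payoff-relevant part. This raises a genuine bookkeeping tension: if the instantaneous control value must simultaneously lie in a prescribed signalling cell and be the $\arg\min$ of $L^0$, these two requirements compete, and your hedge (``spare coordinate'', ``more carefully, by partitioning'') does not resolve it, since $U^0$ is not assumed to have a product structure. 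The paper avoids both the simplex discretization and this tension by encoding \emph{temporally}: on each grid interval $[t_k^n,t_{k+1}^n)$ it reserves a short initial sub-interval of length $\varepsilon_n=T/2^n$, chooses $I$ fixed distinct points $a_1,\dots,a_I\in U^0$, and plays $a_i$ on a sub-sub-interval whose \emph{length} is $(p_{t^n_k})_i\,\varepsilon_n$. Observing the jump times of the realized control lets the minor players reconstruct $p_{t^n_k}$ exactly, so ${\bf p}^{{\bf u}^{0,n}}_t=p_{t^n_k}$ for $t\in[t^n_k+\varepsilon_n,t^n_{k+1})$ with no information loss; on the remainder of the interval the control is free to be the exact $\arg\min$. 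This makes the only approximation a time discretization of $p$ (plus the negligible signalling intervals), so the convergence $J^0({\bf u}^{0,n})\to\bar J^0({\bf P})$ then follows directly from $(H1)$ without needing a separate splitting lemma. Your plan would likely also work once the signalling/payoff conflict is handled carefully (e.g.\ by making the signalling cells shrink), but it involves one more layer of approximation than the paper's construction.

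Also note one point that both you and the paper treat somewhat implicitly: to turn the function $u^{0,n}\colon\Dt0\to\mathcal U^0$ into an admissible strategy in $(\Delta(\mathcal U^0))^I$, one must first realize the abstract martingale ${\bf P}$ together with a random index ${\bf i}$ such that $p_t=\E[e_{\bf i}\mid\mathcal F^p_t]$ (disintegrating via $p_T$), and then let ${\bf u}^{0,n}_i$ be the conditional law of $u^{0,n}(p)$ given ${\bf i}=i$. You identify this as the delicate step, which is fair; the paper handles it in the same implicit way.
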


\begin{proof} Let ${\bf u}^0\in (\Delta(\mathcal U^0))^I$, $({\bf p}^{{\bf u}^0})$ be given by \eqref{defpu0} and ${\bf P}$ be its law on $\Dt0$.  Let $(\phi^{\bf P},m^{\bf P}, M^{\bf P})$ be the solution of the MFG system \eqref{eq.MFGsyst} on $(\Dt0, \mathcal G, {\bf P}, ({\mathcal F}^{\bf P}_t))$. Assume that $(\alpha^{{\bf u}^0}, m^{{\bf u}^0})$ is an MFG equilibrium associated to  ${\bf u}^0$ and let $(\phi,m,M)$ be the solution to the MFG system \eqref{eq.mfgintro} associated to $({\bf p}^{{\bf u}^0})$ on $(\Omega^0\times \mathcal U^0, \mathcal B(\Omega^0\times \mathcal U^0), P^{{\bf u}^0},  (\mathcal F^{{\bf u^0}}_t))$.  We know by Corollary \ref{cor.mkzjnesr} that  $m^{{\bf u}^0}=m$ and that
$$
\alpha^{{\bf u}^0}_t= -H_\xi (X^*_t, D\phi_t(X^*_t),{\bf p}^{{\bf u}^0}_t)
$$
where $X^*_t= Z-\int_0^t H_\xi (X^*_s, D\phi_s(X^*_s),{\bf p}^{{\bf u}^0}_s)ds+\sqrt{2}B_t$. Now, as ${\bf p}^{{\bf u}^0}$ has law ${\bf P}$, we know from Theorem \ref{thmMFGintro} that $(m^{\bf P},  p)$  has the same law as $(m,{\bf p}^{{\bf u}^0})$, which proves that 
\begin{align*}
J^0({\bf u}^0) &= \E^{P^{{\bf u}^0}\otimes \P^1}\bigl[ \int_0^T \sum_{i=1}^I p_i^0 L^0_i(t, u^0_s,  m_s^{{\bf u}^0})ds\Bigr]
 = 
\E^{P^{{\bf u}^0}\otimes \P^1}\bigl[ \int_0^T L^0(t, u^0_s,  m_s^{{\bf u}^0},{\bf p}^{{\bf u}^0}_s )ds\Bigr]\\
& \geq \E^{P^{{\bf u}^0}\otimes \P^1}\bigl[ \int_0^T \min_{u^0\in U^0} L^0(t, u^0,  m_s^{{\bf u}^0},{\bf p}^{{\bf u}^0}_s )ds\Bigr] 
= \E^{\bf P}\bigl[ \int_0^T    \min_{u^0\in U^0} L^0(s, u^0,m^{\bf P}_s, p_s)ds\Bigr] = \bar J^0({\bf P}). 
\end{align*}
Hence $\bar J^0({\bf P})\leq  J^0({\bf u}^0)$. 
 
Conversely,  let ${\bf P}\in \M(p_0)$ and $(\phi^{\bf P},m^{\bf P}, M^{\bf P})$ be the solution of the MFG system \eqref{eq.MFGsyst} on $(\Dt0, \mathcal G, {\bf P}, ({\mathcal F}^{\bf P}_t))$. For each $n \in \mathbb N^*$, we introduce the regular subdivision $\Delta^n = \left\{0=t_0 < t_1 < \ldots <t_n=T \right\}$ of step $\frac Tn$ of the interval $[0,T]$. We also introduce $\epsilon_n = \frac T{2^n}.$\\

Let us consider $I$ distinct elements of $\mathcal U^0$, denoted respectively by $a_1, \ldots,  a_I \in \mathcal U^0$. For each $n\in \N^*$, we introduce the piecewise constant control $\bf u^{0,n}$ as follows:\\

for each $k\in \{0, \ldots, n-1\}$, for each $t\in [t^n_k+\epsilon_n, t^n_{k+1})$, 
$${\bf u}^{0,n}_t = \underset{u^0 \in \mathcal U^0}{\arg \min}\ L^0(t^n_k, u^0,m^{\bf P}_{t^n_k}, p_{t^n_k})$$
and for each $i\in \{1, \ldots, I\}$, for each $t\in \left[t^n_{k} + \left( \overset{i-1}{\underset{j=0}{\sum}}(p_{t^n_k})_i\right)\epsilon_n, t^n_{k} + \left( \overset{i}{\underset{j=0}{\sum}} (p_{t^n_k})_i\right)\epsilon_n  \right),$
$${\bf u}^{0,n}_t = a_i. $$
The above definition allows to encode the values of $\left(p_{t^n_k}\right)_k$ in the process $\bf u^{0,n}$, such that for each $k \in \{0, \ldots, n-1\}$,
$$\sigma \left({\bf u}^{0,n}_t, t<t^n_{k+1} \right) = \sigma \left(p_{t^n_l}, l<k+1 \right).$$
It is then clear that for each $k \in \{0, \ldots, n-1\}$ and $t\in [t^n_{k}+\epsilon_n, t^n_{k+1})$,
$${\bf p}^{{\bf u}^{0,n}}_t = p_{t^n_{k}} .$$
Finally, by Assumption (H1), we can conclude that
\begin{align*}
J^0({\bf u}^{0,n}) &=
\E^{P^{{\bf u}^{0,n}}\otimes \P^1}\!\Bigl[ \int_0^T \!\!\!L^0(t, {\bf u}^{0,n}_s,  m_s^{{\bf u}^{0,n}},{\bf p}^{{\bf u}^{0,n}}_s )ds\Bigr] \xrightarrow{n\rightarrow +\infty} \E^{\bf P}\Bigl[ \int_0^T \!\!   \min_{u^0\in U^0}\! L^0(s, u^0,m^{\bf P}_s, p_s)ds\Bigr] = \bar J^0({\bf P}). 
\end{align*}
\end{proof}

\subsection{Existence of a relaxed solution}

\begin{Theorem}\label{thm.existence} Under our standing assumptions, there exists a minimum to $\bar J^0$. 
\end{Theorem}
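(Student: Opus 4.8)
The plan is to prove existence by the direct method of the calculus of variations: show that the feasible set $\M(p_0)$ is compact for an appropriate topology, and that the functional $\bar J^0$ is lower semicontinuous along that topology. The natural topology on $\Dt0$ is the Meyer-Zheng topology, under which a martingale with values in the bounded set $\Delta(I)$ lives in a relatively compact set; this is precisely the reason the Meyer-Zheng topology was introduced into the setup. So first I would recall that $\M(p_0)$, viewed via the continuous embedding of $\Dt0$ into the Polish space $\mathcal P$ of probability measures on $[0,T]\times\Delta(I)$ with first marginal Lebesgue, is a subset of probability measures on a compact space. The martingale property and the boundedness of $\Delta(I)$ give, through the classical Meyer-Zheng tightness criterion (uniformly bounded conditional variation), that $\M(p_0)$ is tight, hence relatively compact for weak convergence of laws. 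Then I would check that $\M(p_0)$ is closed: the martingale property passes to the limit (this is where some care is needed with the Meyer-Zheng topology, but it is standard — one tests against bounded continuous functionals of the path up to time $s$, using convergence of finite-dimensional distributions along a co-countable set of times), and the constraint $p_t = p_0$ for $t\le 0$ is clearly closed. So $\M(p_0)$ is compact and nonempty (it contains the constant path $p_0$).

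Next I would establish the lower semicontinuity of ${\bf P}\mapsto \bar J^0({\bf P})$. Take ${\bf P}^n\to{\bf P}$ in $\M(p_0)$. By Theorem \ref{thmMFGintro}, to each ${\bf P}^n$ is associated the MFG solution $(\phi^{{\bf P}^n}, m^{{\bf P}^n}, M^{{\bf P}^n})$, and the key stability statement in that theorem — uniqueness in law, together with the uniform bounds \eqref{boundsuD2uMmintro} and \eqref{addregu} that are independent of ${\bf P}$ — should give that the joint law of $(m^{{\bf P}^n}, p)$ converges to that of $(m^{\bf P}, p)$ on $C^0([0,T],\mathcal P_2(\R^d))\times\Dt0$. (If Theorem \ref{thmMFGintro} is not literally phrased as a continuity statement but only as well-posedness plus uniqueness in law, I would recover continuity by the usual compactness-plus-uniqueness argument: extract a subsequence along which $(m^{{\bf P}^n},p)$ converges in law, identify the limit as a solution of the MFG system driven by the limit $p$ using the stability estimates in the proof of Theorem \ref{thmMFGintro}, and invoke uniqueness in law to conclude the full sequence converges.) Then, since $(s,u^0,m,p)\mapsto L^0(s,u^0,m,p)$ is continuous and bounded (H1), the reduced running cost $(s,m,p)\mapsto \bar L^0(s,m,p) = \min_{u^0\in U^0} L^0(s,u^0,m,p)$ is continuous (the min over the compact set $U^0$ of a jointly continuous bounded function is continuous) and bounded; hence $\bar J^0({\bf P}^n) = \E^{{\bf P}^n}[\int_0^T \bar L^0(s,m^{{\bf P}^n}_s,p_s)ds]\to \E^{\bf P}[\int_0^T \bar L^0(s,m^{\bf P}_s,p_s)ds] = \bar J^0({\bf P})$ by bounded convergence applied to the convergence in law of $(m^{{\bf P}^n},p)$. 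So $\bar J^0$ is in fact continuous on $\M(p_0)$, which is more than enough.

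Finally, combining compactness of $\M(p_0)$ with (lower semi)continuity of $\bar J^0$ yields the existence of a minimizer by Weierstrass's theorem: take a minimizing sequence ${\bf P}^n$, extract a convergent subsequence ${\bf P}^{n_k}\to{\bf P}^*\in\M(p_0)$, and conclude $\bar J^0({\bf P}^*)\le\liminf_k \bar J^0({\bf P}^{n_k}) = \inf_{\M(p_0)}\bar J^0$.

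I expect the main obstacle to be the stability/continuity of the MFG solution map ${\bf P}\mapsto m^{\bf P}$ with respect to the Meyer-Zheng convergence of the driving martingale $p$. The Meyer-Zheng topology is weak (convergence in measure on $[0,T]$, plus one-dimensional marginals), so passing it through the nonlinear MFG system — in particular through the Hamilton-Jacobi equation with its random terminal condition $G(\cdot,m_T,p_T)$ and the coupling $F(\cdot,m_t,p_t)$ — requires genuine work; one leans heavily on the uniform-in-${\bf P}$ a priori estimates \eqref{boundsuD2uMmintro}, \eqref{addregu} and on the $L^{d+2}$-type stability estimates invoked in the proof of Theorem \ref{thmMFGintro} (borrowed from \cite{CeSo22, CSS22}). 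A secondary, more technical point is ensuring the martingale property is preserved in the limit under the Meyer-Zheng topology, for which the standard device is to test against functionals depending on finitely many time coordinates chosen outside the at-most-countable set of ``bad'' times where the marginals may fail to converge.
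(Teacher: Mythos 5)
Your overall strategy matches the paper's: direct method, compactness of $\M(p_0)$ in the Meyer-Zheng topology, and continuity of $\bar J^0$ along a convergent subsequence of a minimizing sequence (the final reduction $\bar L^0 = \min_{u^0} L^0$ is continuous and bounded, so one only needs convergence in law of $(m^{{\bf P}^n}, p)$). You also correctly identify the crux: proving continuity of ${\bf P}\mapsto$ law of $(m^{\bf P},p)$ under Meyer-Zheng convergence. That is where the proposal has a real gap, because Theorem~\ref{thmMFGintro} gives only well-posedness and uniqueness in law, not a continuity or stability statement; the paper never extracts continuity ``from'' that theorem.

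The paper closes the gap by a concrete coupling argument that you do not supply. It takes an optimal coupling $\gamma^n$ of ${\bf P}^n$ and ${\bf P}$, invokes the quantitative stability result of \cite[Lemma 4.5]{CSS22} to get $L^1$-coupled convergence of $\phi^n$ and $D\phi^n$ to $\phi$ and $D\phi$ on balls, then builds on a common product space the two SDEs $X^n$ and $X$ driven by the same $(Z,B)$, and uses Gronwall together with the uniform bounds $\|D^2\phi_t\|_\infty\le C$ (from \eqref{addregu}) and $\|m_t\|_\infty\le C$ (from \eqref{boundsuD2uMmintro}) to conclude $\int_0^T {\bf d}_1(m^n_t(p), m_t(p'))\,\gamma^n(dp,dp')\,dt\to 0$. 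This is genuinely nontrivial work. Your proposed fallback --- extract a weakly convergent subsequence of $(m^{{\bf P}^n},p)$, identify the limit as an MFG solution, invoke uniqueness in law --- is a legitimate alternative route in spirit, but you do not carry out the identification step, which is at least as delicate as the coupling argument (in the paper it appears only later, in Lemma~\ref{lem.limMFG} for the $N$-player limit, and still relies on the same CSS22 stability lemma). So the proposal is correct at the level of architecture and diagnosis, but the decisive continuity step is asserted rather than proved.
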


\begin{proof} Let $({\bf P}^n)$ be a minimizing sequence in ${\bf M}(p_0)$ for the relaxed problem (Definition \ref{def.relaxed}). Let $(\phi^n,m^n, M^n)$ be the unique solution to the MFG system \eqref{eq.MFGsyst}  on $({\bf D}, \mathcal G, {\bf P}^n, (\mathcal F^{{\bf P}^n}_t))$.

Recall that ${\bf M}(p_0)$ is compact, when $\Dt0$ is endowed with the Meyer-Zheng topology. Hence  $({\bf P}^n)$ has a converging subsequence, denoted in the same way; let ${\bf P}\in \Dt0$ be its limit. Let also $(\phi,m, M)$ be the unique solution to the MFG system \eqref{eq.MFGsyst}  on $({\bf D}, \mathcal G, {\bf P}, (\mathcal F^{P^n}_t))$. Let finally  $\gamma^n$ be an optimal coupling between ${\bf P}^n$ and ${\bf P}$. We claim that 
\be\label{zlhjakbenrdfg}
\lim_n \int_0^T {\bf d}_1(m^n(t,p),m(t,p'))\gamma^n(dp, dp')  =0. 
\ee
Indeed, according to Lemma 4.5. of \cite{CSS22}, we have, for any $R>0$,  
$$
\lim_n\sup_{t\in [0,T]} \int_{\Dt0\times \Dt0} \sup_{|x|\leq R} |\phi^n_t(x,p)-\phi_t(x,p')|\gamma^n(dp, dp') =0
$$
and 
$$
\lim_n  \int_{\Dt0\times \Dt0} \int_0^T\int_{B_R}  |D\phi^n_t(x,p)-D\phi_t(x,p')|dxdt \gamma^n(dp, dp') =0.
$$
Let $(\Omega^1, \mathcal F^1, \P^1, (\mathcal F^1_t))$ be a filtered probability space supporting a Brownian motion $B$ and a random variable $Z$ on $\R^d$ of law $\bar m_0$ (independent of $p$). On $(\Dt0\times\Dt0\times \Omega^1, \mathcal G\otimes \mathcal G\otimes \mathcal F^1, {\bf P}^n\otimes {\bf P}\otimes \P^1, (\mathcal F^{{\bf P}^n\otimes {\bf P}\otimes \P^1}_t))$ (where  the filtration $(\mathcal F^{{\bf P}^n\otimes {\bf P}\otimes \P^1}_t)$ is the completion of $(\mathcal F^{{\bf P}^n}_t\otimes\mathcal F^{{\bf P}}_t\otimes \mathcal F^1_t))$ satisfying the usual conditions), let $X^n$ solve the SDE
$$
X^n_t (p,p',\omega)= Z(\omega)-\int_0^t H_\xi(X^n_s(p,p',\omega), D\phi^n_s(X^n_s(p,p',\omega),p), p_s)ds +\sqrt{2}B_t(\omega), 
$$
and $X$ be the solution to
$$
X_t (p,p',\omega)= Z(\omega)-\int_0^t H_\xi(X_s(p,p',\omega), D\phi_s(X_t(p,p',\omega),p), p'), p'_t)ds +\sqrt{2}B_t(\omega). 
$$
Then $m^n_t$ is the conditional law of $X^n_t$  given $(\mathcal F^{{\bf P}^n}_t)$ while $m_t$ 
is the conditional law of $X_t$ given $(\mathcal F^{{\bf P}}_t)$.
 Hence, as $\|D^2\phi\|_\infty\leq C$, 
$$
\E^{{\bf P}^n\otimes {\bf P}\otimes \P^1}\left[ |X^n_t-X_t|\right] \leq  C \E^{{\bf P}^n\otimes {\bf P}\otimes \P^1}\left[\int_0^t (|X^n_s-X_s| + |D\phi^n_s(X_s, p)-D\phi_s(X_s, p')|)ds \right], 
$$
so that, by Gronwall's inequality and for any $R>0$, 
\begin{align*}
\E^{{\bf P}^n\otimes {\bf P}\otimes \P^1}\left[ |X^n_t-X_t|\right] &\leq   C\int_0^T   \E^{{\bf P}^n\otimes {\bf P}\otimes \P^1}\left[ |D\phi^n_s(X_s, p)-D\phi_s(X_s, p')|{\bf 1}_{|X_s|\leq R} \right]ds +CR^{-1} \\
& \leq C \int_0^T\int_{\Dt0\times \Dt0} \int_{B_R} |D\phi^n_s(x, p)-D\phi_s(x, p')|m_s(dx,p')\gamma^n(dp,dp')ds+ CR^{-1} \\
& \leq C\int_0^T \int_{\Dt0\times \Dt0} \int_{B_R} |D\phi^n_s(x, p)-D\phi_s(x, p')|dx\gamma^n(dp,dp')ds+ CR^{-1}
 \end{align*}
where we used for the first inequality that $X_t$ has a uniformly bounded second order moment, the fact that $m_t$ is the  conditional law of $X_t$ given $(\mathcal F^{{\bf P}}_t)$ for the second one and that $m_t$ has a uniformly bounded density for the last one.  This shows that 
\begin{align*}
& \int_{\Dt0\times \Dt0}\int_0^T {\bf d}_1(m^n_t(p), m_t(p'))dt \ \gamma^n(dp,dp')   \leq \int_0^T  \E^{{\bf P}^n\otimes {\bf P}\otimes \P^1}\left[ \E^{{\bf P}^n\otimes {\bf P}\otimes \P^1} \left[ |X^n_t-X_t|\; |\; \mathcal F^{{\bf P}^n}_t\otimes\mathcal F^{{\bf P}}_t \right]\right]dt \\ 
&\qquad\qquad\qquad  \leq C\int_{\Dt0\times \Dt0}\int_0^T \int_{B_R} |D\phi^n_s(x, p)-D\phi_s(x, p')|dxds\gamma^n(dp,dp')+ CR^{-1}, 
 \end{align*}
 which tends to $0$ as $n\to\infty$ and then $R\to \infty$. This proves \eqref{zlhjakbenrdfg}. 
 
 We  conclude the proof by noticing that 
 \begin{align*}
|\bar J^0({\bf P}^n)-\bar J^0({\bf P})| & \leq \E^{{\bf P}^n\otimes {\bf P}}\left[ \int_0^T  |\bar L^0(s, p_s, m_s(p))-  \bar L^0(s, p_s', m_s(p'))|ds\right] \\
& \leq \int_{\Dt0\times \Dt0} \int_0^T(|p_t-p'_t|+ {\bf d}_1(m^n_t(p), m_t(p')))dt\gamma^n(dp,dp'),
 \end{align*}
 which tends to $0$ as $n\to\infty$. As $({\bf P}^n)$ is a minimizing sequence, ${\bf P}$ is optimal.

\end{proof}

\section{Application to problems with finitely many players} 

\subsection{Statement of the main result} \label{subsec.statement}

Fix $N\in \N$, $N$ being the large number of small players. We consider Stackelberg equilibria of a differential game in which the $N$ small players interact with a major player. The major player announces a (random) strategy and the small players answer through a family of (``decentralized'') controls based on their observation of the Brownian motions of all players and of the control of the major player. For simplicity we assume as before that a strategy of the informed player is a probability measure ${\bf u}^0=({\bf u}^0_i)_{i=1, \dots, I}\in (\Delta (\mathcal U^0))^I$ on the set of controls. In particular it is independent of the Brownian motions of the players, of their strategies and of the initial positions.  

%

Let us fix a filtered probability space $(\Omega, \mathcal F, (\mathcal F_t), {\bf P})$ supporting, for any $N\geq1$, a family of i.i.d. $\mathcal F_0-$measurable initial conditions $(x^{N,k}_0)_{k=1, \dots, N}$ of law $\bar m_0$ and a family of independent Brownian motions $(B^k)$, independent of the  $(x^{N,k}_0)_{k=1, \dots, N}$ .

For $\delta>0$, let 
\be\label{def.JN0}
J^{N,0}({\bf u}^0,\delta) = \sup_{(\alpha^{N,j})\ \delta-{\text Nash}}   \E^{{\bf P}^{{\bf u}^0}\otimes {\bf P}}\left[ \int_0^T  L^0_{\bf i}(t, u^0_s,m^N_{{\bf X}^N_s})ds\right]
\ee
where the supremum is taken over all the open-loop $\delta-$Nash equilibria $(\alpha^{N,j})$ associated to the costs 
$$
J^{N,j}({\bf u}^0,(\alpha^{N,k}))=  \E^{{\bf P}^{{\bf u}^0}\otimes {\bf P}}\left[ \int_0^T L_{\bf i}(X^{N,j}_t,  \alpha^{N,j}_t) + F_{\bf i}(X^{N,j}_t,m^{N,j}_{{\bf X}^N_t})dt+ G_{{\bf i}}(X_T^{N,j}, m^{N,j}_{{\bf X}^N_t})\right],
$$
each control $(\alpha^{N,j})$  being adapted to the filtration generated by  the initial conditions $(x^{N,k}_0)$, the control  $u^0$ and the Brownian motions $(B^k)$, and $X^{N,j}$ satisfying 
\be\label{dyn.Njoueurs}
X^{N,j}_t = x^{N,j}_0+\int_0^t \alpha^{N,j}_sds + \sqrt{2}B^j_t\qquad \forall t\in [0,T]. 
\ee
We have set ${\bf X}^N_t= (X^{N,1}_t, \dots, X^{N,N}_t)$ and $m^{N,j}_{{\bf X}^N_t}= \frac{1}{N-1}\sum_{k\neq j} \delta_{X^{N,k}_t}$. 
The fact that $(\alpha^{N,j})$ is a $\delta-$Nash equilibrium means that, for any $j\in \{1, \dots, N\}$ and any control $\alpha$ adapted to the filtration generated by the initial conditions $(x^{N,k}_0)$,  the control  $ u^0$ and the Brownian motions $(B^k)$,
$$
J^{N,j}({\bf u}^0,\alpha, (\alpha^{N,k})_{k\neq j})\geq J^{N,j}({\bf u}^0,\alpha^{N,j},(\alpha^{N,k})_{k\neq j})-\delta. 
$$
If there is no  $\delta-$Nash equilibrium for $J^{N,j}({\bf u}^0,\cdot)$, we simply set $J^{N,0}({\bf u}^0,\delta) =+ \infty$. Note that we take the $\sup$ over $\delta-$Nash equilibria in the definition of $J^{N,0}$ in order to take into account the worst possible case for the informed player. In general there are many $\delta-$Nash equilibria, so  choosing a supremum or and infimum in the definition \eqref{def.JN0} of $J^{N,0}$ makes a difference {\it a priori}. 

Our main result states that the  problem with infinitely many players described by in Section \ref{subsec:reform} gives a good approximation of the $N-$player problem when $N$ is large: 

\begin{Theorem}\label{thm.main} Assume that $L^0$, $H_i$, $F_i$ and $G_i$ satisfy conditions \eqref{controlU0}, \eqref{monotonicity}, \eqref{A:MAINHamiltonian}, \eqref{A:MAINHcondition}. Assume in addition that $\bar m_0\in \mathcal P_1(\R^d)$ has a smooth and bounded density and a finite 4th order moment. Fix $\ep>0$ and let $\bar{\bf u}^0$ be an $\ep-$minimizer for   the limit functional $J^0$ defined in \eqref{pb.main}. Then there exists $\delta>0$ and $N_0\in \N$ such that, for any $N\geq N_0$, $\bar{\bf u}^0$ is $3\ep-$optimal for $J^{N,0}(\cdot,\delta)$. 
\end{Theorem}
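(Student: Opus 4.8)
The plan is to show that the limit functional $J^0$ is both an asymptotic lower bound and an asymptotic upper bound for the $N$-player functional $J^{N,0}(\cdot,\delta)$ along any sequence of $\delta$-Nash equilibria, with $\delta\to 0$. Concretely, I would prove: (i) for $\bar{\bf u}^0$ an $\varepsilon$-minimizer of $J^0$, one has $J^{N,0}(\bar{\bf u}^0,\delta)\le J^0(\bar{\bf u}^0)+\varepsilon$ for $N$ large and $\delta$ small; and (ii) for \emph{every} admissible ${\bf u}^0$, $J^{N,0}({\bf u}^0,\delta)\ge J^0({\bf u}^0)-\varepsilon$ for $N$ large, $\delta$ small — here the ``worst case'' sup over $\delta$-Nash equilibria in \eqref{def.JN0} cuts the right way, so it suffices to produce \emph{one} $\delta$-Nash equilibrium with cost close to $J^0$. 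Combining (i) and (ii) with optimality of $\bar{\bf u}^0$ for $J^0$ (up to $\varepsilon$) gives, for any ${\bf u}^0$, $J^{N,0}(\bar{\bf u}^0,\delta)\le J^0(\bar{\bf u}^0)+\varepsilon\le J^0({\bf u}^0)+2\varepsilon\le J^{N,0}({\bf u}^0,\delta)+3\varepsilon$, which is exactly $3\varepsilon$-optimality. I would carry this out by fixing first a small $\varepsilon'$, choosing $N_0$ and $\delta$ afterwards depending on $\varepsilon'$ and on $\bar{\bf u}^0$, and tracking constants carefully.

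For the upper bound (i), the key step is a \emph{limit/compactness argument for open-loop $\delta$-Nash equilibria}. Fix ${\bf u}^0$ (in particular $\bar{\bf u}^0$) and a sequence $(\alpha^{N,j})$ of $\delta_N$-Nash equilibria with $\delta_N\to 0$. Conditioning on ${\bf i}=i$ and on the realization $u^0$ (hence on the information martingale ${\bf p}^{{\bf u}^0}$), I would look at the empirical measures $m^N_{{\bf X}^N_\cdot}$ together with $u^0$. Using the bounds on costs and the superlinear coercivity \eqref{A:MAINHamiltonian0} of $H_i$ in $\xi$ (which gives uniform $L^2$-type control on the $\alpha^{N,j}$ at an $\varepsilon'$-Nash equilibrium, comparing with the zero control), one gets tightness of the empirical measures on path space. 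Following Lacker \cite{La16} and Fischer \cite{Fi17}, any subsequential limit of $(\text{law of }(m^N,u^0))$ is concentrated on \emph{MFG equilibria} of the limit problem with common noise ${\bf p}^{{\bf u}^0}$; by Corollary \ref{cor.mkzjnesr} this equilibrium is unique, equal to $m^{{\bf u}^0}$, and by Theorem \ref{thmMFGintro} its law is $(m^{\bf P},p)$ with ${\bf P}$ the law of ${\bf p}^{{\bf u}^0}$. The Lasry--Lions monotonicity in (H2) is what forces the limit to be a genuine equilibrium (not a randomization over several), exactly as in the standard ``uniqueness of the limit'' arguments. Then, since $L^0$ is continuous and bounded (H1),
\[
\limsup_N J^{N,0}({\bf u}^0,\delta_N)\le \E^{{\bf P}^{{\bf u}^0}\otimes\P^1}\Bigl[\int_0^T L^0_{\bf i}(t,u^0_s,m^{{\bf u}^0}_s)\,dt\Bigr]=J^0({\bf u}^0),
\]
using Lemma \ref{lem.Eu0vsEP} to rewrite this as $\E^{\bf P}[\int_0^T L^0(s,u^0_s,m^{\bf P}_s,p_s)\,ds]$. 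Applying this to $\bar{\bf u}^0$ yields (i) with any prescribed slack $\varepsilon$. One must be slightly careful that the ``$\sup$ over $\delta$-Nash'' and the limit interact correctly: a diagonal argument over $\delta\to 0$ and a contradiction argument (if $\limsup_N J^{N,0}(\bar{\bf u}^0,\delta_N)> J^0(\bar{\bf u}^0)+\varepsilon$, extract near-maximizing $\delta_N$-Nash equilibria and contradict the above) handles this cleanly.

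For the lower bound (ii), fix ${\bf u}^0$ and the associated MFG equilibrium $(\alpha^{{\bf u}^0},m^{{\bf u}^0})$ from Corollary \ref{cor.mkzjnesr}, built from the solution $(\phi,m,M)$ of the MFG system for ${\bf p}^{{\bf u}^0}$. I would define the candidate $N$-player controls in feedback-along-the-optimal-trajectory form, $\alpha^{N,j}_t=-H_\xi(X^{N,j}_t,D\phi_t(X^{N,j}_t),{\bf p}^{{\bf u}^0}_t)$, which is open-loop (adapted to initial conditions, $u^0$, and $B^j$, since $\phi$ is adapted to ${\bf p}^{{\bf u}^0}$). By propagation of chaos — using the Lipschitz bound $\|D^2\phi_t\|_\infty\le C$ from Theorem \ref{thmMFGintro}, the finite-4th-moment hypothesis on $\bar m_0$, and conditioning on $({\bf i},u^0)$ — one shows $m^{N,j}_{{\bf X}^N_t}\to m^{{\bf u}^0}_t$ in $\dk$ in $L^1$, uniformly in $t$, with a rate. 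Then Proposition \ref{prop.HJHJHJ} (the verification/optimality inequality with the quadratic gap term $C^{-1}\int|\alpha+H_\xi|^2$) shows that any unilateral deviation of player $j$ can only improve by an amount controlled by how far the others' empirical measure is from $m^{{\bf u}^0}$, plus the propagation-of-chaos error; so for $N$ large this is a $\delta$-Nash equilibrium with $\delta\to 0$, and its cost $J^{N,0}$ converges to $J^0({\bf u}^0)$ by the same continuity of $L^0$ and propagation of chaos. Since $J^{N,0}$ is a \emph{sup} over $\delta$-Nash equilibria, exhibiting this one equilibrium already gives $J^{N,0}({\bf u}^0,\delta)\ge J^0({\bf u}^0)-\varepsilon$.

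The main obstacle is step (i): making the convergence of open-loop $\delta$-Nash equilibria to the \emph{unique} MFG equilibrium with common noise fully rigorous in this setting where the common noise is the endogenous revelation martingale ${\bf p}^{{\bf u}^0}$ rather than a fixed Brownian motion as in \cite{La16}. The delicate points are: getting uniform integrability/tightness of the controls and empirical flows conditionally on $({\bf i},u^0)$; identifying the limit of the coupled object $(m^N, u^0, {\bf p}^{{\bf u}^0})$ and checking the limiting dynamics really are the MFG system \eqref{eq.mfgintro} (including the martingale structure of $M$ inherited through the filtration generated by $u^0$); and invoking uniqueness — which rests crucially on the strict monotonicity of $F_i$ in (H2) and on Corollary \ref{cor.mkzjnesr}/Theorem \ref{thmMFGintro} — to conclude that the limit is deterministic given the common noise, so that $L^0$-costs converge without a spurious relaxation. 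Once this convergence is in hand, everything else is bookkeeping with the continuity and boundedness of $L^0$, Lemma \ref{lem.Eu0vsEP}, and Gr\"onwall/propagation-of-chaos estimates.
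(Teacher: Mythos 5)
Your proposal is correct and is essentially the same as the paper's: the two intermediate ingredients you identify — a Lacker–Fischer-type convergence of \emph{all} open-loop $\delta$-Nash empirical flows to the unique MFG equilibrium with common noise ${\bf p}^{{\bf u}^0}$ (the paper's Proposition~\ref{prop.Ndeltaprim}, whose proof is indeed the technical heart), and the propagation-of-chaos construction of a $\delta$-Nash from the limit feedback (the paper's Lemma~\ref{lem.un}) — are exactly what the paper establishes and then combines with the $\ep$-minimizer property. The only cosmetic difference is in the bookkeeping: you split the conclusion into an upper bound (i) for $\bar{\bf u}^0$ and a lower bound (ii) for arbitrary ${\bf u}^0$, whereas the paper writes a single chain of inequalities applying Proposition~\ref{prop.Ndeltaprim} to both ${\bf u}^0$ and $\bar{\bf u}^0$ (one must then note, as the paper does, that Proposition~\ref{prop.Ndeltaprim} and Lemma~\ref{lem.un} hold with $\delta,N_0$ uniform in ${\bf u}^0$, a point worth making explicit in your step (ii) as well).
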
 

\begin{Remark}{\rm 
It  can be easily seen in the proof that the result would also hold with an infimum in the definition  \eqref{def.JN0} of $J^{N,0}$ instead of a supremum. 
}\end{Remark}

The proof of Theorem \ref{thm.main} requires    two intermediate results. The first one claims the existence of $\delta-$Nash equilibria for $N$ large enough given any control ${\bf u}^0$. The second one, given again any ${\bf u}^0$, shows that the empirical measure associated to any $\delta-$Nash equilibrium is close to the corresponding MFG equilibrium. The proof of the first statement is standard and relies on arguments going back to Huang et al. \cite{huang2006large}, Carmona and Delarue \cite{carmona2013probabilistic}. It is given after the on-going proof of Theorem \ref{thm.main}. The proof of the second statement, which is inspired by similar statement by Lacker \cite{La16} and Fischer \cite{Fi17}, is much more intricate and postponed to the next section. 

\begin{Lemma} \label{lem.un} Under the assumptions of Theorem \ref{thm.main}, for any $\delta>0$, there exists $N_\delta\in \N$ such that, for any control ${\bf u}^0\in (\Delta(\mathcal U^0))^I$ and any $N\geq N_\delta$, there exists a $\delta-$Nash equilibrium for $(J^{N,j}({\bf u}^0,\cdot))$. 
\end{Lemma}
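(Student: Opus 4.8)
The plan is to build an approximate Nash equilibrium for the $N$-player game out of the MFG equilibrium associated to the given control ${\bf u}^0$, following the classical ``propagation of chaos'' route of Huang--Malhamé--Caines and Carmona--Delarue. First I would fix ${\bf u}^0 \in (\Delta(\mathcal U^0))^I$ and bring in the objects produced earlier: let $({\bf p}^{{\bf u}^0}_t)$ be the $\Delta(I)$-valued martingale of \eqref{defpu0}, let $(\phi, m, M)$ be the unique solution to the MFG system \eqref{eq.mfgintro} driven by $({\bf p}^{{\bf u}^0})$ on $(\Omega^0 \times \mathcal U^0, \mathcal B, P^{{\bf u}^0}, (\mathcal F^{{\bf u}^0}_t))$, and recall from Corollary \ref{cor.mkzjnesr} that the optimal feedback for a representative small player is $\alpha^{{\bf u}^0}_t = -H_\xi(X^*_t, D\phi_t(X^*_t), {\bf p}^{{\bf u}^0}_t)$. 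The key structural ingredients, all available from Theorem \ref{thmMFGintro}, are the uniform bounds $\|D\phi_t\|_\infty + \|D^2\phi_t\|_\infty \le C$ and $\|m_t\|_\infty \le C$, which make the feedback map $(t, x, p) \mapsto -H_\xi(x, D\phi_t(x, p), p)$ globally Lipschitz in $x$ uniformly in $(t,p)$ (using (H3)); this is what drives existence and uniqueness of the candidate system and the quantitative estimates below.

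Next I would define the candidate controls: on the $N$-player probability space, let $(\mathcal F^{{\bf u}^0,N}_t)$ be the filtration generated by the initial conditions $(x^{N,k}_0)$, the major player's control $u^0$ and the Brownian motions $(B^k)$, and set $\alpha^{N,j}_t = -H_\xi(\bar X^{N,j}_t, D\phi_t(\bar X^{N,j}_t, {\bf p}^{{\bf u}^0}_t), {\bf p}^{{\bf u}^0}_t)$ where $\bar X^{N,j}$ solves the McKean--Vlasov-type SDE $\bar X^{N,j}_t = x^{N,j}_0 + \int_0^t \alpha^{N,j}_s\,ds + \sqrt 2 B^j_t$ — i.e. each player uses the MFG optimal feedback along its own trajectory, driven by the common ``noise'' ${\bf p}^{{\bf u}^0}$. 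These $\bar X^{N,j}$ are, conditionally on $\sigma({\bf u}^0)$, i.i.d. copies of $X^*$, so a conditional law of large numbers gives $\E[{\bf d}_1^2(m^{N,j}_{\bar{\bf X}^N_t}, m_t)] \le C(\omega_N + N^{-2/d})$ for some $\omega_N \to 0$, using the finite fourth moment of $\bar m_0$ and the standard Wasserstein rate for empirical measures (here the finite-$N$ interaction is with $m^{N,j} = \frac{1}{N-1}\sum_{k\neq j}\delta_{\bar X^{N,k}}$, costing only an extra $O(1/N)$). Then for the $\delta$-Nash verification I would fix $j$ and an arbitrary admissible deviation $\alpha$ for player $j$, freeze the other players at $\bar X^{N,k}$, $k \neq j$, and compare $J^{N,j}({\bf u}^0, \alpha, (\alpha^{N,k})_{k\neq j})$ to the cost in the MFG control problem where player $j$ faces the exact flow $m_t$: by Proposition \ref{prop.HJHJHJ} (applied on the relevant filtered space, with $(m_t)$ frozen), $\alpha^{{\bf u}^0}$ is optimal against $m$, and the difference between ``facing $m^{N,j}_{\bar{\bf X}^N}$'' and ``facing $m$'' is controlled by $\E\int_0^T {\bf d}_1(m^{N,j}_{\bar{\bf X}^N_t}, m_t)\,dt \le C(\omega_N + N^{-2/d})^{1/2}$ via the Lipschitz continuity of $F_i, G_i$ in the measure argument (here I use the $\mathcal P_1$-Lipschitz bound in (H2)) — and this bound is uniform over the deviation $\alpha$ and, crucially, uniform over ${\bf u}^0$ because all the constants depend only on the data and on the $n$-independent bounds of Theorem \ref{thmMFGintro}. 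Choosing $N_\delta$ so that $C(\omega_{N_\delta} + N_\delta^{-2/d})^{1/2} \le \delta$ finishes the argument.

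I expect the main obstacle to be the \emph{uniformity in ${\bf u}^0$}. The per-control estimates are routine, but one must check that every constant appearing — the Lipschitz constant of the feedback, the bounds on $\phi$ and $m$, the propagation-of-chaos rate, and the constant in the estimate coming from Proposition \ref{prop.HJHJHJ} — depends only on the fixed data $(H_i, F_i, G_i, L^0_i, \bar m_0, T)$ and not on ${\bf u}^0$ or on the particular realization of ${\bf p}^{{\bf u}^0}$; this is exactly what the $n$-independence of $C$ in \eqref{boundsuD2uMmintro}--\eqref{addregu} (emphasized in the proof of Theorem \ref{thmMFGintro}) is designed to provide, since the law of ${\bf p}^{{\bf u}^0}$ ranges over a subset of $\M(p^0)$ and the relevant a priori estimates hold uniformly there. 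A secondary technical point is handling the admissibility/measurability of the deviating control $\alpha$ — it is adapted to the full filtration generated by all $(x^{N,k}_0)$, $u^0$, $(B^k)$, not just player $j$'s data — but since the frozen opponents' trajectories $\bar X^{N,k}$ are themselves adapted to this filtration, the comparison in Proposition \ref{prop.HJHJHJ} still applies verbatim to player $j$'s optimization problem with $(m^{N,j}_{\bar{\bf X}^N_t})$ as an exogenous adapted flow. A final routine verification is that $L^0$ being merely continuous and bounded (H1) suffices for the statement, since Lemma \ref{lem.un} only asserts existence of a $\delta$-Nash equilibrium and does not require any estimate on $J^{N,0}$ itself.
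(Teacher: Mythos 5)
Your proposal follows essentially the same route as the paper's proof: both build the candidate $\delta$-Nash profile from the MFG optimal feedback $-H_\xi(\cdot,D\phi^{{\bf u}^0},{\bf p}^{{\bf u}^0})$ applied along conditionally i.i.d.\ trajectories $\bar X^{N,j}$, invoke a conditional Glivenko--Cantelli estimate to compare the empirical measure with ${\bf m}^{{\bf u}^0}$, and verify approximate optimality of each player's response via Proposition \ref{prop.HJHJHJ} together with the Lipschitz dependence of $F_i,G_i$ on the measure argument, noting that all constants are uniform in ${\bf u}^0$. The only step the paper makes explicit that you leave implicit is a preliminary reduction to deviations $\alpha$ with bounded $L^2$ norm (handled through the coercivity of $L_i$ encoded in \eqref{A:MAINHamiltonian0}, since otherwise the deviating cost blows up and the Nash inequality is trivial), a routine truncation that does not alter the structure of your argument.
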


\begin{Proposition}\label{prop.Ndeltaprim} Under the assumptions of Theorem \ref{thm.main}, fix $\ep>0$ small. Then there exists $\delta>0$ and $N_\delta'\geq N_\delta$ such that, for any control ${\bf u}^0\in (\Delta(\mathcal U^0))^I$,  for all $N\geq N'_\delta$, and any $\delta-$Nash equilibrium $(\alpha^{N,j})$ for $(J^{N,j}({\bf u}^0,\cdot))$ satisfies,
$$
\sup_{t\in [0,T]} \E^{{\bf P}^{{\bf u}^0}\otimes {\bf P}}\left[ {\bf d}_1(m^N_{{\bf X}^N_t}, {\bf m}^{{\bf u}^0}_t))\right] \leq \ep, 
$$
where $m^N_{{\bf X}^N_t}= \frac1N \sum_{k=1}^N \delta_{X^{N,k}_t}$ is the empirical distribution of the positions $X^{N,k}_t$ defined in \eqref{dyn.Njoueurs} and where
$(\phi^{{\bf u}^0},{\bf m}^{{\bf u}^0},{\bf M}^{{\bf u}^0})$ is the  solution to the MFG system  \eqref{eq.mfgintro} associated to $({\bf p}^{{\bf u}^0})$ on $(\Omega^0\times \mathcal U^0, \mathcal B(\Omega^0\times \mathcal U^0), P^{{\bf u}^0},  (\mathcal F^{{\bf u^0}}_t))$, $({\bf p}^{{\bf u}^0})$ being defined by \eqref{defpu0}. 
\end{Proposition}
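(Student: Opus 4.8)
The plan is to follow the strategy of Lacker \cite{La16} and Fischer \cite{Fi17}, characterizing the limit of (open-loop, approximate) Nash equilibria of the $N$-player game, but carried out conditionally on the realization $u^0$ of the major player's control (equivalently, conditionally on the martingale ${\bf p}^{{\bf u}^0}$). First I would argue by contradiction: suppose the conclusion fails, so there is $\ep>0$, sequences $\delta_n\to 0$, $N_n\to\infty$, controls ${\bf u}^{0,n}$, and $\delta_n$-Nash equilibria $(\alpha^{N_n,j})$ for which $\sup_t \E[{\bf d}_1(m^{N_n}_{{\bf X}^{N_n}_t}, {\bf m}^{{\bf u}^{0,n}}_t)] > \ep$. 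One then wants to pass to the limit. A key preliminary reduction: since ${\bf u}^{0,n}$ only enters the small players' problem through the conditional martingale ${\bf p}^{{\bf u}^{0,n}}$, whose law ${\bf P}^n$ lives in the compact set ${\bf M}(p^0)$ for the Meyer--Zheng topology, I may extract a subsequence so that ${\bf P}^n \to {\bf P}\in{\bf M}(p^0)$; by Theorem \ref{thmMFGintro} the associated MFG solutions converge in law, so that $({\bf m}^{{\bf u}^{0,n}}, {\bf p}^{{\bf u}^{0,n}})$ converge in law to $(m^{\bf P}, p)$, exactly as in the proof of Theorem \ref{thm.existence}. The contradiction hypothesis will then have to survive this passage to the limit.

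The core of the argument is the tightness and limit identification of the empirical measures. Following \cite{La16, Fi17}, I would introduce the random empirical measure $\mu^{N}$ on path space $C([0,T];\R^d)\times L^2$ (path, control), together with the canonical lift to the environment $(u^0, B^0\text{-driving data})$, and establish tightness using the uniform energy bound on the controls that comes from the $\delta$-Nash property combined with the coercivity \eqref{A:MAINHamiltonian0} of $H_i$ (so the optimal controls have uniformly bounded $L^2$-norm, hence uniformly integrable; the finite 4th moment of $\bar m_0$ gives the needed moment control). Any weak limit point is then shown to be supported on the MFG optimality system: each limiting ``representative'' player solves the limiting stochastic control problem against the limiting flow $(m_t, p_t)$, where $p$ is the limiting conditional-belief martingale. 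This is the step where the conditioning on $u^0$ must be handled carefully — the common noise here is not a fixed Brownian motion as in \cite{La16} but the belief martingale $p$, which has no prescribed law; so I would work on the canonical space $\Dt0$ and use that the filtration generated by $p$ is the information available to the small players, checking that the limiting controls are adapted to the correct (enlarged) filtration and that the fixed-point (consistency) relation $m_t = \mathrm{Law}(X^*_t \mid {\mathcal F}^p_t)$ holds in the limit. Here Proposition \ref{prop.HJHJHJ} identifies the optimizer of the limiting control problem uniquely as the MFG feedback, and the Lasry--Lions monotonicity of $F$ (strict monotonicity, from (H2) via \eqref{def.LFGp}) forces the limiting flow $m$ to coincide with the unique MFG equilibrium $m^{\bf P}$. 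Thus the limit of $m^{N_n}_{{\bf X}^{N_n}_t}$ equals ${\bf m}^{\bf P}_t$, while the same weak-limit extraction applied to ${\bf m}^{{\bf u}^{0,n}}_t$ also gives ${\bf m}^{\bf P}_t$, so $\E[{\bf d}_1(m^{N_n}_{{\bf X}^{N_n}_t}, {\bf m}^{{\bf u}^{0,n}}_t)]\to 0$ along the subsequence — contradicting the standing assumption. A uniform-in-$t$ version follows because all processes involved have equicontinuous-in-law time marginals (bounded drifts, fixed diffusion), so pointwise convergence upgrades to uniform convergence on $[0,T]$.

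The main obstacle I anticipate is the limit identification of the representative player's problem in the presence of the \emph{non-fixed} common noise $p$: one must show that the limiting control is optimal for the \emph{right} information structure, i.e. adapted to the filtration generated by its own Brownian motion \emph{and} the belief martingale $p$, and not to something larger coming from the empirical fluctuations. This requires an argument in the spirit of Lacker's ``conditional McKean--Vlasov limit'' but adapted to a random environment: one should condition on the limiting $p$ (or on ${\bf P}$), reduce to a problem with that $p$ frozen as a realization, and invoke the uniqueness from Proposition \ref{prop.HJHJHJ} together with a measurable-selection / disintegration argument to glue the fiberwise optimizers back into an adapted process. A secondary technical point is that the $\delta_n$-Nash property only gives $\delta_n$-optimality for each player, so the limiting optimality is genuine optimality only in the limit; propagating this through the monotonicity estimate — bounding $\int (F(x,m^{N}) - F(x,m^{\bf P}))(m^{N}-m^{\bf P})$ by $O(\sqrt{\delta_n}) + o(1)$ — is where I expect the most delicate bookkeeping, exactly as in \cite{La16, Fi17} but now with the extra $p$-dependence.
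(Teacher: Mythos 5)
Your overall strategy matches the paper's: argue by contradiction, use the tightness/limit-identification framework of Lacker and Fischer for open-loop approximate Nash equilibria, identify the limiting representative player as an MFG optimizer against the limiting belief martingale, and invoke uniqueness of the MFG system (Theorem \ref{thmMFGintro}) to close. You also correctly flag the two main technical obstacles: the adaptedness/identification of the limiting optimal control in a random environment (handled in the paper via Lemma \ref{lem.buildalpha}, the Carath\'eodory-approximation Lemma \ref{lem.densite}, and Proposition \ref{prop.HJHJHJ}), and the role of the Lasry--Lions monotonicity in forcing uniqueness.

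There is, however, a genuine gap in your closing step. You conclude from ``$m^{N_n}_{{\bf X}^{N_n}}$ converges in law to $m^{\bf P}$'' and ``${\bf m}^{{\bf u}^{0,n}}$ converges in law to $m^{\bf P}$'' that $\E\left[{\bf d}_1(m^{N_n}_{{\bf X}^{N_n}_t},{\bf m}^{{\bf u}^{0,n}}_t)\right]\to 0$. This does not follow: two sequences with the same limit law need not converge jointly to a common random limit. The contradiction hypothesis \eqref{hypcontra} is a statement about the \emph{joint} distribution of the empirical measure and the MFG equilibrium flow, so you must pass to the limit jointly. This is exactly why the paper works on the augmented canonical space $\mathcal X=(C^0([0,T],\R^d))^2\times(C^0([0,T],\mathcal P_2))^2\times{\bf D}$, which carries \emph{both} measure-valued coordinates $m$ (the limit of $m^{N_n}_{{\bf X}^{N_n}}$) and $\hat m$ (the limit of ${\bf m}^{{\bf u}^{0,n}}$) as components of a single law $Q^n$ (and its limit $Q$). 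The proof then shows that the two triples $(\tilde\phi^Q,m,\tilde M^Q)$ and $(\hat\phi,\hat m,\hat M)$ are \emph{both} solutions of the MFG system on the \emph{same} filtered space $(\mathcal X, Q,\overline{\mathcal F}^Q)$ (Lemmas \ref{lem.buildalpha}--\ref{lem.limMFG}), and the pathwise uniqueness part of Theorem \ref{thmMFGintro} yields $Q[m=\hat m]=1$ (Corollary \ref{lem.key2}). Only this joint almost-sure identity, not a coincidence of marginal laws, contradicts \eqref{hypcontraBIS}. Your phrase ``together with the canonical lift to the environment'' gestures toward this coupling, but you should make explicit that the environment variable carried along includes ${\bf m}^{{\bf u}^{0,n}}$ itself, and that the final uniqueness argument must be run inside the joint limit law $Q$, not separately for each sequence.
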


\begin{proof}[Proof of Theorem \ref{thm.main}] Fix $\ep>0$ and let $\delta$, $N_\delta'$ as in Proposition \ref{prop.Ndeltaprim} for $\ep/(2C)$, where $C$ is the Lipschitz constant of $L^0$ with respect to $m$. Fix $N\geq N_\delta'$ and let  ${\bf u}^0$ be a control for the major player. Let  $(\alpha^{N,j})$ be any $\delta-$Nash equilibrium for $J^{N,j}({\bf u}^0,\cdot)$ and $(\bar \alpha^{N,j})$ be a $\delta-$Nash equilibrium for $J^{N,j}(\bar{\bf u}^0,\cdot)$ which is $\ep-$optimal for $J^{N,0}(\bar{\bf u}^0, \cdot)$ in \eqref{def.JN0}: such $\delta-$Nash equilibria exist thanks to Lemma \ref{lem.un}. We denote by $X^{N,j}$ the solution associated to $\alpha^{N,j}$ and  $\bar X^{N,j}$ the solution associated to $\bar \alpha^{N,j}$. Finally, $(\phi^{{\bf u}^0},{\bf m}^{{\bf u}^0},{\bf M}^{{\bf u}^0})$ is the  solution to the MFG system  \eqref{eq.mfgintro} associated to $({\bf p}^{{\bf u}^0})$ on $(\Omega^0\times \mathcal U^0, \mathcal B(\Omega^0\times \mathcal U^0), P^{{\bf u}^0},  (\mathcal F^{{\bf u^0}}_t))$, while $(\bar \phi^{{\bf u}^0},\bar{\bf m}^{{\bf u}^0},\bar {\bf M}^{\bar {\bf u}^0})$ is the  solution to the MFG system  \eqref{eq.mfgintro} associated to $({\bf p}^{\bar {\bf u}^0})$ on $(\Omega^0\times \mathcal U^0, \mathcal B(\Omega^0\times \mathcal U^0), P^{{\bf u}^0},  (\mathcal F^{{\bf u^0}}_t))$.

Using respectively the definition of $J^{N,0}$, Proposition \ref{prop.Ndeltaprim}, the $\ep-$optimality of $\bar{\bf u}^0$ and Proposition \ref{prop.Ndeltaprim} again, we obtain, 
\begin{align*}
J^{N,0}({\bf u}^0,\delta) & \geq  \E\bigl[ \int_0^T \sum_{i=1}^I p_i L^0_i(t, {\bf u}^0_{i,s},m^N_{{\bf X}^N_s})ds\Bigr]  \geq  \E\bigl[ \int_0^T \sum_{i=1}^I p_i L^0_i(t, {\bf u}^0_{i,s},{\bf m}^{{\bf u}^0}_s)ds\Bigr]-C(\ep/(2C)) \\
& \geq \E\bigl[ \int_0^T \sum_{i=1}^I p_i L^0_i(t, \bar{{\bf u}}^0_{i,s},{\bf m}^{\bar{\bf u}^0}_s)ds\Bigr]-3\ep/2 \\
& \geq  \E\bigl[ \int_0^T \sum_{i=1}^I p_i L^0_i(t, \bar{\bf u}^0_{i,s},m^N_{\bar{\bf X}^N_s})ds\Bigr]-2\ep \geq J^{N,0}(\bar{\bf u}^0,\delta)-3\ep. 
\end{align*}
This shows the $3\ep-$optimality of $\bar{{\bf u}}^0$. 
\end{proof}

We now prove the first intermediate result. 

\begin{proof}[Proof of Lemma \ref{lem.un}] The argument is standard and relies on the conditional propagation of chaos. Fix a control ${\bf u}^0$ and let $({\bf \phi}^{{\bf u}^0}, {\bf m}^{{\bf u}^0}, {\bf M}^{{\bf u}^0})$ be the solution to the MFG system  \eqref{eq.mfgintro} associated to $({\bf p}^{{\bf u}^0})$ on $(\Omega^0\times \mathcal U^0, \mathcal B(\Omega^0\times \mathcal U^0), P^{{\bf u}^0},  (\mathcal F^{{\bf u^0}}_t))$. For $j\in \{1,\dots, N\}$, let $(\bar X^{N,j})$ be the solution to 
$$
\bar X^{N,j}_t = x^{N,j}_0- \int_0^t D_\xi H(\bar X^{N,j}_s, D\phi^{{\bf u}^0}_s(\bar  X^{N,j}_s), {\bf p}^{{\bf u}^0}_s)ds + \sqrt{2}B^j_t, \qquad t\in [0,T]
$$
and 
$$
\bar \alpha^{N,j}_s:= - D_\xi H(\bar X^{N,j}_s, D\phi^{{\bf u}^0}_s( \bar X^{N,j}_s), {\bf p}^{{\bf u}^0}_s).
$$
As the $(x^{N,j}_0)$ and the $(B^j)$ are independent, the $(\bar X^{N,j})$ are iid given $({\mathcal F}^{{\bf u}^0})$ with conditional law  ${\bf m}^{{\bf u}^0}$. By the Glivenko-Cantelli Law of large numbers (see for instance the proof of Theorem II-2.12 of \cite{CaDebook}), 
we have 
\be\label{GCthm}
\sup_{t\in [0,T]} \E\Bigl[ {\bf d}_1( m^N_{\bar{\bf X}^N_t}, {\bf m}^{{\bf u}^0}_t)\Bigr] \leq CN^{-\gamma}, 
\ee
where $C$ depends only on the data and  $\gamma\in (0,1)$ depends on the dimension $d$ only (we use here the assumption that $\bar m_0$ has a fourth order moment, which propagates to the $X^{N,j}_t$).  

Fix $j\in \{1, \dots, N\}$ and let  $\alpha$ be a control adapted to the initial conditions $(x^{N,k}_0)$, the filtration generated by the control  ${\bf u}^0$ and the Brownian motions $(B^k)$. We denote by $X^\alpha$ the solution associated with $\alpha$. Our aim is to show that 
$$
J^{N,j}({\bf u}^0,\alpha, (\bar \alpha^{N,k})_{k\neq j})\geq \E\left[ \int_0^T (L_{\bf i}(\bar X^{N,j}_t, \bar \alpha^{N,j}_t)+F_{\bf i}( \bar X^{N,j}_t, {\bf m}^{{\bf u}^0}_t))dt+ G_{{\bf i}}(\bar X^{N,j}_T, {\bf m}^{{\bf u}^0}_T)\right]- CN^{-\gamma}. 
$$
By the coercivity of $L_i$ and the  $L^\infty$ bounds on the $F_i$, $G_i$, this inequality is obvious if $\|\alpha\|_{L^2([0,T]\times \Omega)}> C$, for $C$ large enough. We now assume that 
\be\label{L2bound}
\|\alpha\|_{L^2([0,T]\times \Omega)}\leq C. 
\ee
Then
\begin{align*}
J^{N,j}({\bf u}^0,\alpha, (\bar \alpha^{N,k})_{k\neq j}) & = \E\left[ \int_0^T (L_{\bf i}(X^\alpha_t,  \alpha_t)+F_{\bf i}( X^\alpha_t,m^{N,j}_{\bar {\bf X}^N_t}))dt+ G_{{\bf i}}(X^\alpha_T, m^{N,j}_{\bar {\bf X}^{N}_t})\right]\\
& \geq \E\left[ \int_0^T (L_{\bf i}(X_t^\alpha,  \alpha_t)+F_{\bf i}( X_t^\alpha, {\bf m}^{{\bf u}^0}_t))dt+ G_{{\bf i}}(X_T^\alpha, {\bf m}^{{\bf u}^0}_T)\right]- CN^{-\gamma}\\ 
& \geq \E\left[ \int_0^T (L_{\bf i}(\bar X^{N,j}_t, \bar \alpha^{N,j}_t)+F_{\bf i}( \bar X^{N,j}_t, {\bf m}^{{\bf u}^0}_t))dt+ G_{{\bf i}}(\bar X^{N,j}_T, {\bf m}^{{\bf u}^0}_T)\right]- CN^{-\gamma}, 
\end{align*}
where the first inequality comes from  \eqref{GCthm} and the fact that $\E[{\bf d}_1(m^{N}_{\bar {\bf X}^N_t}, m^{N,j}_{\bar {\bf X}^N_t})]\leq C/N$ (thanks to the bound \eqref{L2bound}), while the second one holds by the optimality of  $\bar \alpha^{N,j}$ in the limit problem (Proposition \ref{prop.HJHJHJ}). Using once more \eqref{GCthm}, we obtain 
\begin{align*}
J^{N,j}({\bf u}^0,\alpha, (\bar \alpha^{N,k})_{k\neq j}) & \geq \E\left[ \int_0^T (L_{\bf i}(\bar  X^{N,j}_t,  \bar \alpha^{N,j}_t)+F_{\bf i}(\bar  X^{N,j}_t, m^{N,j}_{\bar{\bf X}^N_t})dt+ G_{{\bf i}}(\bar X^{N,j}_T, m^{N,j}_{\bar{\bf X}^N_T})\right]- 2CN^{-\gamma}\\
& = J^{N,j}({\bf u}^0,(\bar \alpha^{N,k})) - 2CN^{-\gamma}.
\end{align*}
This proves that $(\bar \alpha^{N,j})$ is a $\delta-$Nash equilibrium if one chooses $N\geq N_\delta$ for $N_\delta$ large enough.  
\end{proof}

\subsection{Proof of Proposition \ref{prop.Ndeltaprim}}\label{subsec.proofmainlemma}

This part is devoted to the proof of Proposition \ref{prop.Ndeltaprim}.  It relies  on the construction of Lacker \cite{La16} (see also  Fischer \cite{Fi17} and Djete \cite{Dj21}). We argue by contradiction, assuming that there exists $\ep>0$ and, for any $n$ large, a random control ${\bf u}^{0,n}\in (\Delta(\mathcal U^0))^I$, a number of players $N_n\geq n$ and a probability space $(\Omega^n, \mathcal F^n, \P^n)$ on which are defined  i.i.d. initial conditions $x^{n,j}_0\in \R^d$, $N_n$ $d-$dimensional independent Brownian motions $(B^{n,j})$, independent of the initial conditions,   and an $1/n-$Nash equilibrium $(\alpha^{N_n,j})_{j=1, \dots, N_n}$ for the payoffs $(J^{N_n,j}({\bf u}^{0,n},\cdot))$ such that 
\be\label{hypcontra}
\sup_{t\in [0,T]} \E^n\left[ {\bf d}_1(m^{N_n}_{\bar{\bf X}^{N_n}_t}, {\bf m}^{{\bf u}^{0,n}}_t))\right] > \ep,
\ee
where $\bar{\bf X}^{N_n}=(\bar X^{N_n,1}, \dots, \bar X^{N_n,N_n})$ is the trajectory associated with $(\alpha^{N_n,j})_{j=1, \dots, N_n}$ as in \eqref{dyn.Njoueurs} and $({\boldsymbol \phi}^{{\bf u}^{0,n}},{\bf m}^{{\bf u}^{0,n}},{\bf M}^{{\bf u}^{0,n}})$ is the solution to the MFG system   \eqref{eq.mfgintro} associated to $({\bf p}^{{\bf u}^{0,n}})$ on $(\Omega^0\times \mathcal U^0, \mathcal B(\Omega^0\times \mathcal U^0), P^{{\bf u}^{0,n}},  (\mathcal F^{{\bf u^{0,n}}}_t))$, $({\bf p}^{{\bf u}^{0,n}})$ being defined  as usual by
$$
{\bf p}^{{\bf u}^{0,n}}_t = \E^{{\bf u^{0,n}}} \left[ e_{{\bf i}}\ |\ \mathcal F^{{\bf u^{0,n}}}_t\right]. 
$$

We first place ourselves on the  space $\mathcal X=(C^0([0,T], \R^d))^2\times (C^0([0,T], \mathcal P_2))^2\times {\bf D}$.  Let $(\mathcal F^{\mathcal X}_t)$ be the canonical filtration on $\mathcal X$, i.e., $\mathcal F^{\mathcal X}_t$ is the $\sigma-$algebra generated by the maps 
$$
\mathcal X \ni \omega:= (x,w,m,\hat m, p)\mapsto (x_s,w_s,m_s,\hat m_s , p_s) \; \text{for}\; s\leq t . 
$$
Let 
$$
Q^n= \frac{1}{N_n}\sum_{j=1}^{N_n}  P^{{\bf u}^{0,n}}\otimes \P^n\circ (X^{N_n,j}, B^{n,j}, m^{N_n}_{{\bf X}^{N_n}}, {\bf m}^{{\bf u}^{0}_n}, {\bf p}^n)^{-1}.
$$
Assumption \eqref{hypcontra} can be rephrased as follows:  
\be\label{hypcontraBIS}
\sup_{t\in [0,T]} \int_{\mathcal X} {\bf d_1}(m_s,\hat m_s)Q^n(dx,dw,dm, d\hat m, dp) > \ep.
\ee

\begin{Lemma}\label{lem.compact} The family $(Q^n)$ is relatively compact in $\mathcal P^1(\mathcal X)$. 
\end{Lemma}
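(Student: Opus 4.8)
The plan is to establish relative compactness of $(Q^n)$ in $\mathcal P^1(\mathcal X)$ by verifying tightness of each of the five marginal families together with uniform integrability of the relevant norms (the latter because we work in the $1$-Wasserstein space $\mathcal P^1(\mathcal X)$, not merely $\mathcal P(\mathcal X)$). Since tightness on a product space follows from tightness of each marginal, and since $Q^n$ is by construction an average of laws of the tuples $(X^{N_n,j},B^{n,j},m^{N_n}_{{\bf X}^{N_n}},{\bf m}^{{\bf u}^0_n},{\bf p}^n)$, it suffices to control each component uniformly in $n$ and $j$. First I would handle the Brownian component: the laws of $B^{n,j}$ are all equal to Wiener measure, so that family is trivially tight and has all moments bounded.

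Next, for the state component $X^{N_n,j}$, I would use the dynamics \eqref{dyn.Njoueurs}, $X^{N_n,j}_t = x^{N_n,j}_0 + \int_0^t \alpha^{N_n,j}_s\,ds + \sqrt 2 B^j_t$, together with an a priori $L^2$ bound on the controls coming from the $\delta$-Nash property: comparing $\alpha^{N_n,j}$ with the null control and using the coercivity assumption \eqref{A:MAINHamiltonian0} on $H_i$ (equivalently the quadratic growth of $L_i$ from below) and the boundedness of $F_i,G_i$, one gets $\sup_{n,j}\E^n\int_0^T|\alpha^{N_n,j}_s|^2\,ds \le C$. This gives equi-integrability of the drift increments, hence tightness of $(X^{N_n,j})$ in $C^0([0,T],\R^d)$ via the Aldous/Kolmogorov criterion, and — using the fourth moment assumption on $\bar m_0$ — a uniform bound on $\E^n[\sup_t|X^{N_n,j}_t|^{1+\eta}]$ for some $\eta>0$, which yields the uniform integrability needed for $\mathcal P^1$. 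For the empirical-measure component $m^{N_n}_{{\bf X}^{N_n}}$, tightness in $C^0([0,T],\mathcal P_2)$ and uniform integrability of $t\mapsto \int|x|\,m^{N_n}_{{\bf X}^{N_n}_t}(dx)$ then follow from the corresponding estimates on the individual trajectories, using that $\int_{\mathcal X}\text{(functional of }m)\,dQ^n$ equals the empirical average over $j$ of the same functional.

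For the last two components, ${\bf m}^{{\bf u}^0_n}$ and ${\bf p}^n$: the process ${\bf p}^{{\bf u}^{0,n}}$ is a $\Delta(I)$-valued martingale, hence takes values in the fixed compact $\Delta(I)$, so its laws on $\Dt0$ lie in the compact set ${\bf M}(p^0)$ (compactness of ${\bf M}(p_0)$ in the Meyer--Zheng topology was already invoked in the proof of Theorem \ref{thm.existence}); thus that marginal family is automatically tight. For ${\bf m}^{{\bf u}^0_n}$, I would appeal to the uniform a priori bounds from Theorem \ref{thmMFGintro}: the density $\|m^{{\bf u}^{0,n}}_t\|_\infty \le C$ and $\|D\phi^{{\bf u}^{0,n}}_t\|_\infty \le C$ with $C$ independent of $n$, together with the Fokker--Planck equation \eqref{eq.FPintro}, give a uniform modulus of continuity in time for ${\bf m}^{{\bf u}^{0,n}}$ in $\mathcal P_2$ and a uniform bound on its second moment (propagated from $\bar m_0$), hence tightness in $C^0([0,T],\mathcal P_2)$ and the required uniform integrability. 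Finally I would combine the five tightness statements (Prokhorov on the product) with the joint uniform integrability of $|x|+|w|+\int|y|\,m(dy)+\int|y|\,\hat m(dy)+|p|$ to conclude relative compactness in $\mathcal P^1(\mathcal X)$.

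I expect the main obstacle to be the a priori $L^2$ bound on the $\delta$-Nash controls $\alpha^{N_n,j}$ uniformly in $n$ and $j$: this is what couples the finite-player equilibrium condition to the compactness argument, and it must be extracted purely from the $\delta$-Nash inequality (comparing with the null or the MFG-based control $\bar\alpha^{N,j}$ of Lemma \ref{lem.un}) plus the coercivity in \eqref{A:MAINHamiltonian0} and the boundedness of $F_i,G_i$ — one has to be careful that the $\delta$ term and the $O(N^{-\gamma})$ propagation-of-chaos errors do not spoil the bound. Once that bound is in hand, everything else is a routine tightness-and-uniform-integrability verification.
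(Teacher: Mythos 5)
Your proposal follows essentially the same route as the paper's: the decisive step is extracting the uniform $L^2$ bound $\sup_{n,j}\E\int_0^T|\alpha^{N_n,j}_s|^2\,ds\le C$ from the $\delta$-Nash inequality together with the coercivity of $H$ and the boundedness of $F,G$, after which tightness and the uniform integrability needed for $\mathcal P^1$ follow by classical arguments that the paper simply cites to Lacker and Fischer. Your explicit marginal-by-marginal verification (including compactness of $\M(p_0)$ under Meyer--Zheng and the a priori bounds on $\|m^{{\bf u}^{0,n}}_t\|_\infty$, $\|D\phi^{{\bf u}^{0,n}}_t\|_\infty$) is precisely the content of those cited arguments.
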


The lemma relies on standard arguments that we briefly recall. 

\begin{proof}  As $(\alpha^{N_n,j})_{j=1, \dots, N_n}$ is a $1/n-$Nash for the payoffs $(J^{N_n,j}({\bf u}^{0,n},\cdot))$, our assumptions on the running and terminal cost, as well as the growth conditions on $L$ give the existence of a constant $C$ (independent of $j$ and $n$) such that 
$$
\E\left[\int_0^T | \alpha^{N_n,j}_s|^2ds \right]\leq C\qquad \forall j, n.
$$
This implies the tightness of  $(Q^n)$ by classical arguments (see for instance \cite[section 5.2]{La16} or \cite{Fi17} in closely related frameworks). 
\end{proof}

From now on we fix $Q$  any limit (up to subsequence) of $(Q^n)$. For simplicity we argue as if $Q$ is the limit of the whole sequence $(Q^n)$. We denote by ${\mathcal F}^{Q}$ the completion of the filtration $\mathcal F^{\mathcal X}$ for $Q$.

\begin{Lemma} \label{lem.basic} In $(\mathcal X,{\mathcal F}^{Q}, Q)$, 
\begin{enumerate}
\item\label{deux}
$w$ is a Wiener process, $p$ is a c\`{a}dl\`{a}g martingale valued in $\Delta(I)$, with $ p_t=p_0$ for $t<0$ and $Q\circ x_0^{-1}=\bar m_0$. 
\item\label{quatre} $x_0$, $w$ and $(p,m,\hat m)$ are independent.
\item \label{lem.m=condlaw}  $m = Q[x \in \cdot | (m,p)]$ a.s. That is, $m$ is a version of the conditional law of $x$ given $(m,p)$. 
\end{enumerate}
\end{Lemma}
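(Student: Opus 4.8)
The three assertions are the standard consistency properties one extracts when passing to the limit in an empirical-measure construction à la Lacker--Fischer, so the plan is to verify each of them by a closed-testing argument against bounded continuous functionals, using the structure of $Q^n$.

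\emph{Assertion 1.} That $w$ is a Wiener process under $Q$ follows because, under each $Q^n$, the coordinate $w$ is (the law of) the Brownian motion $B^{n,j}$, which is a Wiener process with respect to the filtration generated by the initial conditions, $\mathbf u^{0,n}$ and the $(B^{n,k})$; passing to the limit, the martingale characterization of Brownian motion (Lévy) survives weak convergence because the test functionals $f(w_{t_1},\dots,w_{t_k})(w_t-w_s)$ and $f(w_{t_1},\dots,w_{t_k})((w_t-w_s)(w_t-w_s)^\top-(t-s)I)$ are bounded and continuous off a $Q$-null set of times, so their $Q^n$-expectations vanish and hence so do their $Q$-expectations. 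That $p$ is a càdlàg $\Delta(I)$-valued martingale with $p_t=p_0$ for $t<0$: under $Q^n$, $p$ is the law of the Doob martingale $\mathbf p^{\mathbf u^{0,n}}=\E^{\mathbf u^{0,n}}[e_{\mathbf i}\mid\mathcal F^{\mathbf u^{0,n}}_t]$, which starts at $p_0$ for $t\le 0$ and is a martingale; the martingale property is preserved under Meyer--Zheng convergence (this is exactly the feature of the Meyer--Zheng topology that makes it useful here, cf.\ the discussion preceding Definition \ref{def.MFGsystintro}, and is the reason $\mathbf M(p_0)$ is closed), and the pinning $p_t=p_0$ for $t\le 0$ is a closed condition. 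Finally $Q\circ x_0^{-1}=\bar m_0$ because under each $Q^n$ the law of $x_0$ is exactly $\bar m_0$.

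\emph{Assertion 2 (independence).} Under $Q^n$ the triple is $(X^{N_n,j},B^{n,j},m^{N_n}_{\mathbf X^{N_n}},\mathbf m^{\mathbf u^{0,n}},\mathbf p^n)$ averaged over $j$; here $x_0=x^{n,j}_0$ is $\mathcal F_0$-measurable and independent of $(B^{n,k})$, while $(\mathbf m^{\mathbf u^{0,n}},\mathbf p^n)$ and $m^{N_n}_{\mathbf X^{N_n}}$ are built from $\mathbf u^{0,n}$ and the $(B^{n,k})$ but — crucially — the Brownian motion $w=B^{n,j}$ of the $j$-th tagged player becomes negligible in the empirical measure as $N_n\to\infty$ (exchangeability plus $1/N_n$-weight of each player). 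So the plan is: test against products $g_1(x_0)g_2(w_{t_1},\dots,w_{t_k})g_3(\text{functionals of }(p,m,\hat m))$, use $\mathcal F_0$-independence of $x_0$ for the first factor, and for the decoupling of $w$ from $(p,m,\hat m)$ invoke the standard estimate that replacing $m^{N_n}_{\mathbf X^{N_n}}$ by $m^{N_n,j}_{\mathbf X^{N_n}}=\frac1{N_n-1}\sum_{k\ne j}\delta_{X^{N_n,k}}$ costs $O(1/N_n)$ in $\mathbf d_1$ (as already used in the proof of Lemma \ref{lem.un}), after which $w=B^{n,j}$ is genuinely independent of everything driving $(p,m^{N_n,j},\hat m)$; then let $n\to\infty$.

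\emph{Assertion 3 ($m$ is the conditional law of $x$ given $(m,p)$).} This is the delicate one and I expect it to be the main obstacle. Under $Q^n$ the "$m$" coordinate is the empirical measure $m^{N_n}_{\mathbf X^{N_n}}$ and "$x$" is the position $X^{N_n,j}$ of a tagged player, so conditionally on $m^{N_n}_{\mathbf X^{N_n}}$ the position $X^{N_n,j}$ is (almost) a sample from $m^{N_n}_{\mathbf X^{N_n}}$ itself — this is the finite-$N$ avatar of assertion 3, exact up to the $O(1/N_n)$ self-interaction correction. The plan is to show that for every bounded continuous $\Phi$ on $C^0([0,T],\R^d)$ and every bounded continuous $\Psi$ on $C^0([0,T],\mathcal P_2)\times\mathbf D$,
\[
\E^{Q}\bigl[\Phi(x)\,\Psi(m,p)\bigr]=\E^{Q}\Bigl[\langle m,\Phi\rangle\,\Psi(m,p)\Bigr],
\]
where $\langle m,\Phi\rangle:=\int \Phi\,dm$. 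Under $Q^n$ the left side is $\frac1{N_n}\sum_j\E[\Phi(X^{N_n,j})\Psi(m^{N_n}_{\mathbf X^{N_n}},\mathbf p^n)]=\E[\langle m^{N_n}_{\mathbf X^{N_n}},\Phi\rangle\,\Psi(m^{N_n}_{\mathbf X^{N_n}},\mathbf p^n)]$ by definition of the empirical measure, which is exactly the right side evaluated at $Q^n$; both sides are bounded continuous functionals on $\mathcal X$ (the evaluation $m\mapsto\langle m,\Phi\rangle$ is continuous on $C^0([0,T],\mathcal P_2)$ for bounded continuous $\Phi$, and $\Psi$ is continuous for the product of the $C^0([0,T],\mathcal P_2)$-topology and the Meyer--Zheng topology), so passing to the limit gives the identity, and since such $\Phi$ and $\Psi$ separate points this yields $m=Q[x\in\cdot\mid(m,p)]$ $Q$-a.s. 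The one subtlety to watch is continuity of $\Psi$ in the $p$-variable for the Meyer--Zheng topology — but since $\mathbf D$ embeds into $\mathcal P([0,T]\times\Delta(I))$ it suffices to take $\Psi$ that factors through this embedding, and these still separate, so the argument goes through.
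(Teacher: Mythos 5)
Assertions~1 and~3 of your plan are essentially the paper's argument. For~1, you and the paper both pass the Wiener/martingale properties through the limit (martingale preservation under Meyer--Zheng for $p$, the law of $x_0$ by direct computation). For~3, the paper tests exactly the same identity $\E^Q[\psi_1(m,p)\psi_2(x)]=\E^Q[\psi_1(m,p)\langle m,\psi_2\rangle]$ and observes that under $Q^n$ the two sides coincide (in fact \emph{exactly}: the coordinate $m$ under $Q^n$ is the full empirical measure $m^{N_n}_{{\bf X}^{N_n}}=\frac{1}{N_n}\sum_k\delta_{X^{N_n,k}}$, so $\frac{1}{N_n}\sum_j\psi_2(X^{N_n,j})=\langle m^{N_n}_{{\bf X}^{N_n}},\psi_2\rangle$ with no $O(1/N_n)$ self-interaction error — your caveat on this point is spurious but harmless).

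Assertion~2 is where your plan has a genuine gap. You propose to replace $m^{N_n}_{{\bf X}^{N_n}}$ by the leave-one-out empirical measure $m^{N_n,j}_{{\bf X}^{N_n}}$ (at cost $O(1/N_n)$) and then argue that $w=B^{n,j}$ is ``genuinely independent of everything driving $(p,m^{N_n,j},\hat m)$''. That independence is false in the setting of Proposition~\ref{prop.Ndeltaprim}: there the $\delta$-Nash controls $\alpha^{N_n,k}$, $k\neq j$, are only assumed adapted to the filtration generated by \emph{all} the Brownian motions $(B^{n,k})_{k}$ (and the $(x^{N,k}_0)$ and $u^0$). They are open-loop but not decentralized, so the trajectories $X^{N_n,k}$, $k\neq j$, and hence $m^{N_n,j}_{{\bf X}^{N_n}}$, may very well depend on $B^{n,j}$. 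Your argument would be fine for the specific decentralized equilibria constructed in Lemma~\ref{lem.un}, but Proposition~\ref{prop.Ndeltaprim} has to cover \emph{arbitrary} $\delta$-Nash equilibria, and that is exactly the source of difficulty. The paper sidesteps this by never invoking finite-$n$ independence: one writes
\[
\E^{Q^n}\bigl[\phi_1(x_0)\phi_2(w)\phi_3(p,m,\hat m)\bigr]
=\E\Bigl[\Bigl(\tfrac{1}{N_n}\sum_{j}\phi_1(X^{N,j}_0)\phi_2(B^{n,j})\Bigr)\,\phi_3({\bf p}^n,m^{N_n}_{{\bf X}^{N_n}},{\bf m}^{{\bf u}^{0,n}})\Bigr],
\]
and uses that the pairs $(X^{N,j}_0,B^{n,j})_j$ are i.i.d.\ with law $\bar m_0\otimes\mathcal W^d$, so by the law of large numbers the inner average converges in $L^1$ to the deterministic constant $(\int\phi_1\,d\bar m_0)(\int\phi_2\,d\mathcal W^d)$. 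Multiplying by the bounded factor $\phi_3$ and passing to the limit then yields the factorized expectation, regardless of how correlated $m^{N_n}_{{\bf X}^{N_n}}$ is with $B^{n,j}$ at finite $n$. This LLN step is the missing idea in your plan.
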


\begin{proof} 
\begin{enumerate}
\item 
On $(\mathcal X,\mathcal F^{\mathcal X}, Q)$, $w$ is a Wiener process as this is the case for $Q^n$. Moreover, using \cite{MeZh84}, $p$ is a c\`{a}dl\`{a}g martingale valued in $\Delta(I)$. Note that, $Q-$a.s.,  we have $ p_t=p_0$ for $t<0$ and $P( p_T=i)=p_{0,i}$ for any $i=1, \dots, I$ as this is the case for $Q^n$.  This implies that the same holds for the completed filtration $(\mathcal X,{\mathcal F}^{Q}, Q)$. In addition, for any continuous and bounded map $\phi:\R^d\to \R$,  
$$
\E^{Q^n}\left[ \phi( x_0)\right] = \frac{1}{N_n}\sum_{j=1}^{N_n} \E^{{\bf P}^{{\bf u}^{0,n}} \otimes \P^n}\Bigl[\phi(X^{N,j}_0)\Bigr]= \int_{\R^d}\phi(x)\bar m_0(dx).
$$ 
Thus  $Q\circ x_0^{-1}=\bar m_0$. 
\item  Let $\phi_1:\R^d\to\R$, $\phi_2:C^0\to \R$, $\phi_3:\Dt0\times (C^0_{\mathcal P_2})^2\to \R$ be continuous and bounded. Then 
\begin{align*}
\E^{Q^n} \left[ \phi_1(x_0)\phi_2(w)\phi_3(p,m,\hat m)\right] = \frac{1}{N_n}\sum_{j=1}^{N_n} \E^{{\bf P}^{{\bf u}^{0,n}}\otimes \P^n} \left[ \phi_1(X^{N,j}_0)\phi_2(B^j)\phi_3({\bf p}^n,m^{N_n}_{\bf X^{N_n}},{\bf m}^{\bf u^{0}_n})\right] 
\end{align*}
Since the $(X^{N,j}_0, B^{n,j})$ are iid with law $\bar m_0\otimes \mathcal W^d$ ($\mathcal W^d$ being the Wiener measure on $C^0$) we have, by the law of large numbers, 
$$
\lim_n\E^{{\bf P}^{{\bf u}^{0,n}}\otimes \P^n} \left[  \left|\frac{1}{N_n}\sum_{j=1}^{N_n} \phi_1(X^{N,j}_0)\phi_2(B^j)- \int_{\R^d\times C^0} \phi_1\phi_2 d\bar m_0\otimes \mathcal W^d\right|\ \left|\phi_3({\bf p}^n,m^{N_n}_{\bf X^{N_n}},{\bf m}^{\bf u^{0}_n})\right| \right]= 0. 
$$
Thus 
$$
\lim_n \E^{Q^n} \left[ \phi_1(x_0)\phi_2(w)\phi_3(p,m,\hat m)\right] = (\int_{\R^d} \phi_1 d\bar m_0)(\int_{C^0}\phi_2d\mathcal W^d)  \E^Q\left[ \phi_3(p,m,\hat m)\right] ,
$$
which shows that the  $x_0$, $w$ and $(p,m,\hat m)$ are independent.

\item This is an adaptation of Lemma 5.5  in Lacker \cite{La16}. For any $\psi_1: C^0([0,T], \mathcal P_2(\R^d))\times \Dt0\to  \R$ and $\psi_2: C^0([0,T], \R^d)\to  \R$ bounded and continuous, 
\begin{align*}
\E^Q\left[ \psi_1(m,p)\psi_2(x)\right] & = \lim_n \E^{Q^n}\left[ \psi_1(m,p) \psi_2(x)\right]  \\ 
& = \lim_n \E^{{{\bf u}^{0,n}}\otimes \P^n}\left[ \psi_1(m^{N_n}_{{\bf X}^{N_n}},{\bf p}^n)\frac{1}{N_n} \sum_{j=1}^{N_n} \psi_2( X^{N_n,j} )\right]\\
& = \lim_n \E^{{{\bf u}^{0,n}}\otimes \P^n}\left[ \psi_1(m^{N_n}_{{\bf X}^{N_n}},{\bf p}^n) \int_{C^0([0,T], \R^d)}  \psi_2( x)m^{N_n}_{{\bf X}^{N_n}}(dx)\right]\\ 
& = \E^Q\left[ \psi_1(m,p)\int_{C^0([0,T], \R^d)}  \psi_2(x)m(dx)\right]
\end{align*}
\end{enumerate}
\end{proof}

The next step consists in identifying $x$ as the solution of a SDE with square integrable drift. 

\begin{Lemma}\label{lem.buildalpha} Under $Q$, there exists an ${\mathcal F}^{Q}-$progressively measurable process $(\alpha_t)$ with values in $\R^d$ such that
$$
\int_{\mathcal X}\int_0^T  |\alpha_s|^2ds dQ<\infty, 
$$
$$
x_t-x_0-\sqrt{2}w_t= \int_0^t \alpha_sds\qquad \forall t\in [0,T],\; Q-a.s..
$$
\end{Lemma}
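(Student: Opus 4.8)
The plan is to follow the drift--reconstruction argument of Lacker \cite{La16} and Fischer \cite{Fi17}: one shows that the ``energy'' of the noise--corrected canonical path stays uniformly bounded along the sequence $(Q^n)$, and then recovers $\alpha$ by Lebesgue differentiation. Write $A_t:=x_t-x_0-\sqrt2\,w_t$ for the continuous process on $\mathcal X$, and recall from the proof of Lemma \ref{lem.compact} that the $1/n$--Nash property, together with the coercivity of the $L_i$ and the boundedness of $F_i,G_i$, yields a constant $C$ independent of $n$ and $j$ with $\E\big[\int_0^T|\alpha^{N_n,j}_s|^2ds\big]\le C$. On the space carrying the $N_n$--player game, the dynamics \eqref{dyn.Njoueurs} give $X^{N_n,j}_t-x^{n,j}_0-\sqrt2 B^{n,j}_t=\int_0^t\alpha^{N_n,j}_sds$; transporting this identity to $\mathcal X$ through the maps defining $Q^n$ and averaging over $j$, we see that $Q^n$--a.s. the path $t\mapsto A_t$ is absolutely continuous, with (left Lebesgue) derivative $\beta^n_t:=\lim_{h\downarrow0}h^{-1}(A_t-A_{t-h})$, which is $\mathcal F^{\mathcal X}_t$--measurable and satisfies $\E^{Q^n}\big[\int_0^T|\beta^n_s|^2ds\big]=\tfrac1{N_n}\sum_{j}\E\big[\int_0^T|\alpha^{N_n,j}_s|^2ds\big]\le C$.

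Next, introduce on $C^0([0,T],\R^d)$ the energy
\[
\mathcal E(y):=\sup\Bigl\{\textstyle\sum_{k=0}^{m-1}\frac{|y_{t_{k+1}}-y_{t_k}|^2}{t_{k+1}-t_k}\ :\ 0=t_0<\dots<t_m=T\Bigr\},
\]
which is lower semicontinuous and bounded below by $0$, and which equals $\int_0^T|\dot y_s|^2ds$ when $y$ is absolutely continuous with $\dot y\in L^2$ and $+\infty$ otherwise (a classical characterisation of $W^{1,2}([0,T])$, obtained e.g. by weak $L^2$--compactness of the difference quotients). Since the map $\mathcal X\ni(x,w,m,\hat m,p)\mapsto A=x-x_0-\sqrt2\,w\in C^0([0,T],\R^d)$ is continuous and $Q^n\to Q$ in $\mathcal P^1(\mathcal X)$, the portmanteau theorem gives
$\E^Q[\mathcal E(A)]\le\liminf_n\E^{Q^n}[\mathcal E(A)]=\liminf_n\E^{Q^n}\big[\int_0^T|\beta^n_s|^2ds\big]\le C<\infty$.
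In particular $\mathcal E(A)<\infty$ $Q$--a.s., so $Q$--a.s. the path $A$ is absolutely continuous with a square--integrable derivative.

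It remains to produce a progressively measurable version of that derivative and to record the integrability bound. Set $\alpha_t:=\lim_{h\downarrow0}h^{-1}(A_t-A_{t-h})$ where the limit exists and is finite and $\alpha_t:=0$ otherwise; then $(t,\omega)\mapsto\alpha_t(\omega)$ is jointly measurable, each $\alpha_t$ is $\mathcal F^{\mathcal X}_t$--measurable, hence after completion $(\alpha_t)$ is $\mathcal F^{Q}$--progressively measurable, and by the Lebesgue differentiation theorem together with the continuity of $t\mapsto A_t$ and $A_0=0$ we obtain $A_t=\int_0^t\alpha_sds$ for all $t\in[0,T]$, $Q$--a.s. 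Finally $\int_{\mathcal X}\int_0^T|\alpha_s|^2ds\,dQ=\E^Q[\mathcal E(A)]\le C<\infty$. The only genuinely delicate point is the lower--semicontinuity step: one must make sure that the averaged $1/n$--Nash controls provide the uniform $L^2$ bound used above (this is exactly what is extracted in Lemma \ref{lem.compact}) and that the energy of the limiting path cannot inflate along the weakly convergent sequence $(Q^n)$; the remaining measure--theoretic and real--analysis points are routine.
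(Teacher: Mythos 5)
Your proof is correct, and it takes a genuinely different route from the paper's. The paper also starts from the uniform $L^2$ bound $\E[\int_0^T|\alpha^{N_n,j}_s|^2ds]\le C$ furnished by the $1/n$-Nash property, but from there it proceeds by a discrete-time argument: it passes to the limit in $n$ to get $\E^Q\bigl[\sum_{l=0}^{k-1}|x_{(l+1)/k}-x_{l/k}-\sqrt2(w_{(l+1)/k}-w_{l/k})|^2\bigr]\le Ck^{-1}$, introduces the piecewise-constant difference quotients, extracts a weak $L^2$ limit $\alpha$ from that bounded family, and then identifies $x_t-x_0-\sqrt2 w_t=\int_0^t\alpha_s\,ds$ by testing against bounded random variables. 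You instead define the $W^{1,2}$-energy functional $\mathcal E$ as a supremum over partitions, use its lower semicontinuity on $C^0$ and the fact that $\mathcal E$ depends continuously on the Polish-space coordinates $(x,w)$ to apply the portmanteau/Fatou inequality along $Q^n\to Q$, conclude that $A=x-x_0-\sqrt2 w$ is $Q$-a.s. absolutely continuous with square-integrable derivative, and then recover $\alpha$ by one-sided Lebesgue differentiation. Your route is, if anything, slightly cleaner on the measurability side: the left difference quotients $h^{-1}(A_t-A_{t-h})$ are manifestly progressively measurable, so their pointwise limit is too, whereas the paper's weak $L^2$ limit is a priori only an equivalence class and needs an extra (unwritten) step — Mazur/Banach–Saks to pass to strong limits of convex combinations, or a final differentiation of the representation $x_t=x_0+\sqrt2w_t+\int_0^t\alpha_s\,ds$ — before one can assert progressive measurability. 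The paper's testing argument avoids invoking the $W^{1,2}$ characterization of $\mathcal E$ and the portmanteau theorem for unbounded lower-semicontinuous integrands, keeping the tools more elementary; your argument hews closer to Lacker's original presentation and localizes all the work in one lower-semicontinuity inequality. Both approaches are sound.
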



\begin{proof}  Using the quadratic growth of the Hamiltonians (see  \eqref{A:MAINHamiltonian0}), we have, for any $k\in \N$ large and any $j\in \{1, \dots, N_n\}$,  
\begin{align*}
& \E^{{{\bf u}^{0,n}}\otimes \P^n} \Bigl[ \sum_{l=0}^{k-1} \Bigr|X^{N_n,j}_{(l+1)/k}-X^{N_n,j}_{l/k}-\sqrt{2}(B^{n,j}_{(l+1)/k}-B^{n,j}_{l/k})\Bigr|^2\Bigr] \\
& = \E^{{{\bf u}^{0,n}}\otimes \P^n} \Bigl[ \sum_{l=0}^{k-1} \Bigr|\int_{l/k}^{(l+1)/k}\alpha^{N_n,j}_sds \Bigr|^2\Bigr]  \leq k^{-1}  \E\Bigl[ \int_0^T \Bigl|\alpha^{N_n,j}_s\Bigr|^2ds\Bigr] \leq Ck^{-1}.
\end{align*}
Hence, averaging over $j$ and passing to the limit:  
$$
\E^Q\Bigl[ \sum_{l=0}^{k-1} \Bigr|x_{(l+1)/k}-x_{l/k}-\sqrt{2}(w_{(l+1)/k}-w_{l/k})\Bigr|^2 \Bigr] \leq Ck^{-1}.
$$
Let us set 
$$
\alpha^m_t = k^{-1}(x_{(l+1)/k}-x_{l/k}-\sqrt{2}(w_{(l+1)/k}-w_{l/k})) \qquad \text{for}\; t\in [l/k, (l+1)/k).
$$
We have just checked that $\alpha^k$ is bounded in $L^2([0,T]\times \Omega)$. Therefore a subsequence converges weakly in $L^2$ to a random variable $\alpha\in L^2([0,T]\times \Omega)$. Fix $t\in [0,T]$ and  $A$ a  bounded random variable. Let $l_k=[kt]$. Then we have by definition of $\alpha^m$ and by the estimates above: 
$$
\E^Q[A(x_t-x_0-\sqrt{2}w_t -\int_0^t \alpha_s ds)] =\lim_{k\to\infty} \E^Q[ A(x_t-x_{l_k/k}-\sqrt{2}(w_t-w_{l_k/k})- \int_{l_k/k}^t \alpha^k_sds)] =0.
$$
Thus (using the a.s. continuity in time of all arguments), $Q-$a.s.,  
$$
x_t=x_0+\sqrt{2}w_t +\int_0^t \alpha_s ds \qquad \forall t\in [0,T].
$$
It is clear that $\alpha$ is a $\overline{\mathcal F}^{\mathcal X}-$progressively measurable process, thanks to the representation above and because $x$ and $w$ are $\overline{\mathcal F^{\mathcal X}}-$progressively measurable.  
%
%
\end{proof}

On $(\mathcal X, Q, (\overline{\mathcal F}^Q_t))$, let $(\tilde \phi^Q,\tilde M^Q)$ be the solution of the backward HJ equation 
\be\label{tildephiQ}
\left\{\begin{array}{l}
d  \tilde \phi^Q_t(x) = \left\{  -\Delta \tilde \phi^Q_t (x)+H(x,D \tilde \phi^Q_t(x),p_t)- F(x,m_t,p_t)\right\}dt +d \tilde M^Q_t(x)\qquad \text{in} \; (0,T)\times \R^d\\
\tilde \phi^Q_T(x)= G(x,m_T,  p_T) \qquad \text{in} \; \R^d
\end{array}\right.
\ee
given in Proposition \ref{prop.HJHJHJ}. 
Let also $\alpha=(x-w)'$ as defined in Lemma \ref{lem.buildalpha}. 

\begin{Lemma} We have $Q-$a.s. and for a.e. $t\in (0,T)$, 
$$
\alpha_t= -H_\xi(x_t, D \tilde \phi^Q_t(x_t),p_t)
$$
and therefore $(\tilde \phi^Q,m,\tilde M^Q)$ is a solution to the MFG system \eqref{eq.mfgintro}.
\end{Lemma}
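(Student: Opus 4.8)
The idea is to show that the drift $\alpha$ of Lemma \ref{lem.buildalpha} is optimal for the limiting control problem attached to $\tilde\phi^Q$, and then to exploit the strict convexity of that problem (quantified in Proposition \ref{prop.HJHJHJ}) to identify $\alpha$ with the optimal feedback. First I would check that Proposition \ref{prop.HJHJHJ} applies on $(\mathcal X, {\mathcal F}^Q, Q)$ with the pair $(p,m)$, the process $w$, and the initial law $\bar m_0 = Q\circ x_0^{-1}$: by Lemma \ref{lem.basic}, $w$ is an ${\mathcal F}^Q$-Brownian motion (being independent of $(x_0,p,m,\hat m)$), $p$ and $m$ are ${\mathcal F}^Q$-adapted, and $x_0$ is independent of $(p,m)$ with law $\bar m_0$. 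Applying Proposition \ref{prop.HJHJHJ} to the ${\mathcal F}^Q$-progressively measurable control $\alpha$, for which $x_t = x_0 + \int_0^t\alpha_s\,ds + \sqrt2\,w_t$, yields
\begin{align*}
\E^Q\big[\tilde\phi^Q_0(x_0)\big] & \le \E^Q\Big[\int_0^T\big(L(x_t,\alpha_t,p_t)+F(x_t,m_t,p_t)\big)\,dt + G(x_T,m_T,p_T)\Big]\\
& \quad - C^{-1}\,\E^Q\Big[\int_0^T\big|\alpha_t + H_\xi\big(x_t, D\tilde\phi^Q_t(x_t),p_t\big)\big|^2\,dt\Big].
\end{align*}
Hence it suffices to prove the reverse inequality $\E^Q\big[\int_0^T(L(x_t,\alpha_t,p_t)+F(x_t,m_t,p_t))\,dt + G(x_T,m_T,p_T)\big] \le \E^Q[\tilde\phi^Q_0(x_0)]$.

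This reverse inequality is the main step, and it is where the approximate-Nash property enters. Rewriting the $N_n$-player costs in terms of the belief martingale ${\bf p}^n$ exactly as in Lemma \ref{lem.Eu0vsEP} and passing to the limit along $Q^n\to Q$ — using the convexity and coercivity of $L$ in the control variable, the continuity and boundedness of $F,G$, the fact that $m^{N_n,j}_{{\bf X}^{N_n}}$ and $m^{N_n}_{{\bf X}^{N_n}}$ differ by $O(1/N_n)$, and the weak-$L^2$ convergence of the drifts from the proof of Lemma \ref{lem.buildalpha} — standard lower semicontinuity (as in \cite{La16, Fi17}) gives
$$
\E^Q\Big[\int_0^T(L(x_t,\alpha_t,p_t)+F(x_t,m_t,p_t))\,dt + G(x_T,m_T,p_T)\Big] \le \liminf_n \frac1{N_n}\sum_{j=1}^{N_n} J^{N_n,j}\big({\bf u}^{0,n},(\alpha^{N_n,k})\big).
$$
On the other hand, for each $j$ let $\psi^{n,j}$ solve the HJ equation \eqref{HJHJHJ} with the (random) data $\big(m^{N_n,j}_{{\bf X}^{N_n}}, {\bf p}^n\big)$ on the $N_n$-player space; since the profile is open-loop, the configuration of the players $k\neq j$ is unaffected by a deviation of player $j$, so player $j$ may deviate to $-H_\xi(\cdot, D\psi^{n,j}, {\bf p}^n)$ along the corresponding state, which by the verification content of Proposition \ref{prop.HJHJHJ} costs $\E[\psi^{n,j}_0(x^{n,j}_0)]$, and the $1/n$-Nash property then gives $J^{N_n,j}({\bf u}^{0,n},(\alpha^{N_n,k})) \le \E[\psi^{n,j}_0(x^{n,j}_0)] + 1/n$. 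Averaging over $j$ and letting $n\to\infty$, the stability of the HJ equation with respect to its data — Lemma 4.5 of \cite{CSS22}, used with an optimal coupling between $Q^n$ and $Q$ together with the Glivenko--Cantelli convergence of $m^{N_n,j}$ towards $m$ along that coupling, exactly as in the proof of Theorem \ref{thm.existence} — yields $\frac1{N_n}\sum_j \E[\psi^{n,j}_0(x^{n,j}_0)] \to \E^Q[\tilde\phi^Q_0(x_0)]$. Combining the last two displays proves the reverse inequality.

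Putting the two inequalities together forces $\E^Q\big[\int_0^T|\alpha_t + H_\xi(x_t, D\tilde\phi^Q_t(x_t),p_t)|^2\,dt\big]=0$, i.e. $\alpha_t = -H_\xi(x_t, D\tilde\phi^Q_t(x_t), p_t)$ for a.e. $t$, $Q$-a.s. For the last assertion, \eqref{tildephiQ} is precisely the Hamilton--Jacobi part of \eqref{eq.mfgintro}, with $\tilde\phi^Q_T = G(\cdot, m_T, p_T)$; and applying It\^o's formula to test functions of $x_t = x_0 + \int_0^t\alpha_s\,ds + \sqrt2\,w_t$ with $\alpha_s = -H_\xi(x_s, D\tilde\phi^Q_s(x_s), p_s)$ and taking conditional expectations given $(m,p)$ — legitimate since $w$ is independent of $(m,p)$ — shows, just as in the second half of the proof of Corollary \ref{cor.mkzjnesr}, that $m_t = Q[x_t\in\cdot\,|\,(m,p)]$ (Lemma \ref{lem.basic}(\ref{lem.m=condlaw})) solves the Fokker--Planck equation with drift $H_\xi(\cdot, D\tilde\phi^Q, p)$, while $m_0 = \bar m_0$ because $x_0\perp(m,p)$ and $Q\circ x_0^{-1} = \bar m_0$; hence $(\tilde\phi^Q, m, \tilde M^Q)$ solves \eqref{eq.mfgintro}. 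The main obstacle is the convergence $\frac1{N_n}\sum_j \E[\psi^{n,j}_0(x^{n,j}_0)] \to \E^Q[\tilde\phi^Q_0(x_0)]$: it requires building the coupling between the $N_n$-player empirical measures and the mean field $m$ carried by $Q$, controlling the replacement of $m^{N_n,j}$ by $m^{N_n}$, and transporting the stability estimate for the common-noise-driven HJ equation along this coupling — precisely where the machinery of \cite{CSS22}, and the argument already developed for Theorem \ref{thm.existence}, is needed.
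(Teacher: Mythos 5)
Your proposal takes a genuinely different route from the paper's, and that route has a non\-trivial gap at the step you yourself flag as ``the main obstacle''. Both proofs reduce the statement to the reverse inequality $\E^Q[J(x)] \le \E^Q[\tilde\phi^Q_0(x_0)]$ (the forward one and the quadratic defect being exactly Proposition \ref{prop.HJHJHJ} applied on $(\mathcal X, Q)$, which you correctly justify via Lemma \ref{lem.basic}); the difference is how this reverse inequality is obtained. The paper deviates each player $j$ of the $N_n$\mbox{-}game to a \emph{fixed} Carath\'eodory map $\alpha^m$ on $\mathcal X$ that approximates the mean-field feedback $\alpha^*=-H_\xi(X^*, D\tilde\phi^Q(X^*),p)$ (this is what Lemma \ref{lem.densite} is for), writes the resulting cost as $\E^{Q^n}[J(\xi^m)]$ with $J(\omega,\xi^m(\omega))$ a \emph{continuous bounded} function on $\mathcal X$, so that $\E^{Q^n}[J(\xi^m)]\to\E^Q[J(\xi^m)]$ by weak convergence alone, and then sends $m\to\infty$. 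No stability of the backward HJ equation is needed. You instead deviate each player to the exact $N_n$\mbox{-}game best response, characterised by a player-by-player HJ solution $\psi^{n,j}$ with data $(m^{N_n,j}_{{\bf X}^{N_n}},{\bf p}^n)$, and then want to send $\frac1{N_n}\sum_j\E[\psi^{n,j}_0(x^{n,j}_0)]\to\E^Q[\tilde\phi^Q_0(x_0)]$. Two problems arise here.

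First, the verification $J^{N_n,j}({\bf u}^{0,n},(\alpha^{N_n,k}))\le\E[\psi^{n,j}_0(x^{n,j}_0)]+1/n$ is not a direct consequence of Proposition \ref{prop.HJHJHJ}. That proposition is stated on a \emph{product} space $\Omega\times\Omega^1$, so $(B,Z)$ is by construction independent of the environment $(m,p)$. In the $N_n$\mbox{-}player game, the opponents' open-loop controls $(\alpha^{N_n,k})_{k\neq j}$ are, by the definition in Section \ref{subsec.statement}, adapted to the filtration generated by \emph{all} the $(x^{N,k}_0)$, ${\bf u}^0$, and $(B^k)$, including $B^{n,j}$ and $x^{n,j}_0$; hence $m^{N_n,j}_{{\bf X}^{N_n}}$ is in general \emph{not} independent of $(B^{n,j},x^{n,j}_0)$, and the hypotheses of Proposition \ref{prop.HJHJHJ} fail. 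Second, even granting the verification, the convergence of the averaged initial HJ values requires a stability result for \eqref{HJHJHJ} with respect to the joint law of the environment $(m,p)$, with the environment here being the empirical measure of an \emph{arbitrary} $\delta$\mbox{-}Nash profile, adapted to the full $N_n$\mbox{-}player filtration. Lemma~4.5 of \cite{CSS22}, as it is invoked in Theorem \ref{thm.existence} and in Lemma \ref{lem.limMFG}, addresses HJ (or MFG) solutions attached to a common-noise filtration on the canonical space and with data that is itself an equilibrium measure; transporting that stability to your $\psi^{n,j}$ is not routine and is not done in the paper. In short, your route is conceptually the natural one (``compare to the $N_n$\mbox{-}best response''), but it exports all the difficulty into an HJ stability estimate that the paper deliberately avoids by deviating to a fixed continuous control and using only the convergence $Q^n\to Q$. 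The Fokker--Planck part at the end of your argument matches the paper's and is fine.
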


\begin{proof} Following Proposition \ref{prop.HJHJHJ}, equation \eqref{tildephiQ} has a unique solution. In addition, one can check (using the argument in the proof of Theorem \ref{thmMFGintro} above and the construction of the solution in \cite{CSS22}) that, $Q-$a.s., 
$$
\|D^2\tilde \phi^Q\|_\infty + \|\tilde M^Q\|_\infty\leq C.
$$
Let $(X^*_t)$ be the unique solution to 
\be\label{edseds*}
X^*_t= x_0- \int_0^t H_\xi(X^*_s, D\tilde \phi^Q_s(X^*_s),p_t)ds + \sqrt{2} w_t, \qquad t\geq 0.
\ee
We set $\alpha^*_t = -H_\xi(X^*_t, D\tilde \phi^Q_t(X^*_t),p_t)$. By  construction $\alpha^*$ is bounded and adapted to the filtration $(\overline{\mathcal F}^Q_t)$. Using Lemma \ref{lem.densite} below one can find a sequence of uniformly bounded, Carath\'eodory maps $\alpha^m:[0,T]\times \mathcal X\to \R^d$, progressively measurable with respect to the filtration $(\overline{\mathcal F}_t^Q)$ such that 
$$
\lim_m \E^Q \left[ \int_0^T |\alpha_s^m-\alpha_s^*|ds\right]=0.
$$
As $(\alpha^{N_n,j})$ is a $(1/n)-$Nash equilibrium, 
\be\label{ezlrjklfg}
 J^{N_n,j}({\bf u}^{0,n},\alpha^{N_n,j}, (\alpha^{N_n,k})_{k\neq j}) 
\leq  J^{N_n,j}({\bf u}^{0,n}, \alpha^{m,n,j}, (\alpha^{N_n,k})_{k\neq j}) + (1/n),
\ee
where 
$$
\alpha^{m,n,j}_s= \alpha^m_s(X^{N_n,j}, B^{n,j}, m^{N_n}_{{\bf X}^{N_n}}, {\bf m}^{{\bf u}^{0}_n}, {\bf p}^n).
$$
Recall that 
$$
 J^{N_n,j}({\bf u}^{0,n}, \alpha^{m,n,j}, (\alpha^{N_n,k})_{k\neq j})  = \E^{\P^n}\left[ \int_0^T L_{\bf i}(X^{m,n,j}_t,  \alpha^{m,n,j}_t)+ F_{\bf i}(X^{m,n,j}_t, m^{N,j}_{{\bf X}^N_t})dt+ G_{{\bf i}}(X_T, m^{N,j}_{{\bf X}^N_t})\right]
$$
with 
$$
X^{m,n,j}_t= x_0^{n,j}+ \int_0^t \alpha^{m,n,j}_sds + \sqrt{2} B^{n,j}_t, \qquad t\in [0,T].
$$
Let $\xi^m:\mathcal X \to C^0$  be the continuous map 
$$
\xi^m(x,w,m,\hat m, p)= (t\to x_0+ \int_0^t \alpha^m_s(x,w,m,\hat m, p) ds+\sqrt{2}w_t).
$$ 
Then 
$$
\xi^m(X^{N_n,j}, B^{n,j}, m^{N_n}_{{\bf X}^{N_n}}, {\bf m}^{{\bf u}^{0}_n}, {\bf p}^n)= X^{m,n,j}
$$
and 
$$
\frac{1}{N_n}\sum_{j=1}^{N_n}  J^{N_n,j}({\bf u}^{0,n}, \alpha^{m,n,j}, (\alpha^{N_n,k})_{k\neq j}) 
= \E^{ Q^n} \left[ J(\xi^m)\right],
$$
where $J:\mathcal X\times C^0([0,T],\R^d)\to\R\cup\{+\infty\}$ is the Caratheodory map defined by  
\begin{align*}
& J(\omega, y)=J((x,w,m,\hat m, p), y)\\
& := \left\{\begin{array}{ll} \int_0^T (L(y_s,\beta_s, m_s)+F(y_s, m_s, p_s))ds + G(y_T, m_T,p_T) \quad \text{if $y-w\in H^1$, with $\beta= (y-w)'$}\\
+\infty \quad {\rm otherwise}
\end{array}\right.
\end{align*}
We often omit the first argument in $J$. Inequality \eqref{ezlrjklfg} becomes therefore 
$$
\E^{ Q^n} \left[ J(x)\right]\leq \E^{ Q^n} \left[ J(\xi^m)\right] + (1/n). 
$$
Note now that 
$$
J(\omega,\xi^m(\omega)) = \int_0^T (L(\xi_t^m(\omega), \alpha^m_t(\omega), p_t)+ F(\xi^m_t(\omega), m_t))dt+G(\xi^m_T(\omega), m_T),
$$
which is a continuous map of $\omega$. Hence we can let $n\to \infty$ and find, by lower semicontinuity of $y\to J(y)$ for the left-hand side,  
$$
\E^Q\left[J(x)\right] \leq \E^Q\left[ J(\xi^m(x))\right].
$$
We now let $m\to\infty$ to find, by dominate convergence, 
$$
\E^Q\left[J(x)\right] \leq \E^Q\left[ \int_0^T L(X^*_t, \alpha^*_t)+ F(X^*_t, m_t))dt+G(X^*_T, m_T) \right]=:I.
$$
By Proposition \ref{prop.HJHJHJ}  we have 
$$
I=\E\left[\tilde \phi^Q_0(x_0)\right] \leq \E^Q\left[J(x)\right] -C^{-1} \E\left[\int_0^T|\alpha_t+H_\xi(x_t, D\tilde \phi^Q_t(x_t))|^2dt\right],
$$
where $\alpha$ is defined in Lemma \ref{lem.buildalpha}. This implies that, $Q-$a.s., 
$$
\alpha_t= -H_\xi(x_t, D\tilde \phi^Q_t(x_t))\qquad \text{a.e.}.
$$
By the uniqueness of the solution to \eqref{edseds*}, we find therefore 
$$
X^*_t=x_t \qquad \forall t\in [0,T].
$$

Let us now prove that  $(\tilde \phi^Q,m,\tilde M^Q)$ is a solution to the MFG system. We are just left to show that $m$ satisfies in the sense of distribution the Fokker-Planck equation 
$$
dm_t(x) = \left\{ \Delta m_t(x) +{\rm div}(H_\xi(x,D \phi_t(x), p_t) m_t(x))\right\}dt\; \text{in} \; (0,T)\times \R^d. 
$$
Let $h\in C^\infty_c([0,T)\times \R^d)$ be a smooth test function. Then by It\^{o}'s formula we have 
\begin{align*}
 0& = h(T,x_T)\\
&= h(0,x_0)+\int_0^T (\partial_t h+\Delta h - H_\xi(\cdot, D\tilde \phi^Q_s(\cdot))\cdot Dh)(s,\cdot) ds+ \int_0^T \alpha_s\cdot dw_s
\end{align*}
Conditioning with respect to $(m,p)$, we find by Lemma \ref{lem.basic} (point \ref{lem.m=condlaw}) and recalling that $x_0$, $w$ and $(p,m,\hat m)$ are independent (Lemma \ref{lem.basic}, point \ref{quatre}), that, $Q-$a.s.,
$$
0= \int_{\R^d}  h(0,x) \bar m_0(dx)+ \int_0^T \int_{\R^d} (\partial_t h+\Delta h - H_\xi(\cdot, D\tilde \phi^Q_s(\cdot))\cdot Dh)(s,x) m_s(dx) ds, 
$$
which proves that $m$ solves the Fokker-Planck equation. Therefore that $(\tilde \phi^Q,m,\tilde M^Q)$ is a solution to the MFG system.
\end{proof}

To complete the proof of the previous lemma, we need to check the following: 

\begin{Lemma}\label{lem.densite} Let $\alpha$ be a bounded, $(\overline{\mathcal F}^Q_t)-$adapted control. There exists a sequence of uniformly bounded, Carath\'eodory maps $\alpha^m:[0,T]\times \mathcal X\to \R^d$, progressively measurable with respect to the filtration $(\overline{\mathcal F}_t^Q)$ such that 
$$
\lim_m \E^Q \left[ \int_0^T |\alpha_s^m-\alpha_s|ds\right]=0.
$$
\end{Lemma}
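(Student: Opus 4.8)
The statement is a standard approximation result: any bounded, adapted control on the canonical space $\mathcal X$ can be approximated in $L^1(dt\otimes dQ)$ by Carath\'eodory maps $\alpha^m:[0,T]\times\mathcal X\to\R^d$ that are progressively measurable with respect to the canonical filtration. The plan is to proceed in three layers of approximation, each standard on its own, combined by a diagonal argument.

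\emph{Step 1: Time-discretization / mollification in time.} First I would reduce to controls that are, for each fixed time, nicer functions of the past. A routine way: replace $\alpha$ by its Steklov average (or a dyadic conditional-expectation averaging) $\alpha^{(\delta)}_t = \delta^{-1}\int_{(t-\delta)^+}^{t}\E^Q[\alpha_s\mid \overline{\mathcal F}^Q_{(t-\delta)^+}]\,ds$, which is bounded by $\|\alpha\|_\infty$, is $(\overline{\mathcal F}^Q_t)$-adapted, and converges to $\alpha$ in $L^1(dt\otimes dQ)$ as $\delta\to 0$ by Lebesgue differentiation together with martingale convergence. This reduces matters to controls which, on each interval $[t_k,t_{k+1})$ of a fixed partition, are $\overline{\mathcal F}^Q_{t_k}$-measurable random variables.

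\emph{Step 2: Replace the completed $\sigma$-algebra by the canonical one.} Each such piece is an $\overline{\mathcal F}^Q_{t_k}$-measurable bounded random variable; since $\overline{\mathcal F}^Q_{t_k}$ is the $Q$-completion of $\mathcal F^{\mathcal X}_{t_k}$, it agrees with an $\mathcal F^{\mathcal X}_{t_k}$-measurable one $Q$-a.s. Moreover $\mathcal F^{\mathcal X}_{t_k}$ is the $\sigma$-algebra generated by the continuous (resp.\ c\`adl\`ag) coordinate projections $\omega\mapsto(x_s,w_s,m_s,\hat m_s,p_s)_{s\le t_k}$, i.e.\ a separable sub-$\sigma$-algebra of a Polish (via the $\mathcal P$-embedding of $\Dt0$) space. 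So each piece is a Borel-measurable function of those projections; approximate it, in $L^1(Q)$, by bounded continuous functions of finitely many of them using Lusin's theorem (or density of $C_b$ in $L^1$). Assembling the pieces over the partition produces a map $\alpha^m:[0,T]\times\mathcal X\to\R^d$ which is piecewise constant in $t$, on each $[t_k,t_{k+1})$ equal to a bounded continuous function of $\omega$ measurable w.r.t.\ $\mathcal F^{\mathcal X}_{t_k}$; such a map is Carath\'eodory (measurable in $\omega$, and trivially ``continuous'' in $t$ off the finite partition — one can also smooth in $t$ to make it genuinely continuous in $t$ if desired) and progressively measurable w.r.t.\ $(\overline{\mathcal F}^Q_t)$.

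\emph{Step 3: Diagonalize.} Combine the errors from Steps 1 and 2 with the triangle inequality, truncating the continuous approximants at level $\|\alpha\|_\infty$ (which only decreases the $L^1$ error), and extract a single sequence $\alpha^m$ with $\E^Q[\int_0^T|\alpha^m_s-\alpha_s|\,ds]\to 0$, uniformly bounded by $\|\alpha\|_\infty$. The main obstacle — and the only place one must be a little careful — is Step 2: one must make sure the approximating continuous functions remain adapted to the \emph{correct} (past) $\sigma$-algebra, i.e.\ that the continuous functions of the coordinate process used on $[t_k,t_{k+1})$ depend only on the trajectory up to $t_k$; this is handled by first conditioning/freezing on the partition before applying the $C_b$-density, exactly as in Lacker \cite[Lemma 5.5 and the surrounding constructions]{La16}. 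Everything else is soft functional-analytic routine.
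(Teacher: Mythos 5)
Your proposal follows essentially the same route as the paper: reduce to simple (piecewise-constant in time) processes whose values are bounded random variables adapted at the partition times, then use a Lusin-type argument to replace each such random variable by a continuous function of the coordinate process, and reassemble. The paper implements your Step~2 concretely via the weak Lusin theorem followed by the Tietze extension theorem on the Meyer--Zheng pseudo-path space, producing a genuine continuous extension of the restriction to a compact set of large $Q$-measure, whereas you invoke $C_b$-density in $L^1(Q)$, which is the same idea packaged slightly differently. Your Step~1 (Steklov/conditional averaging) is one explicit way to produce the simple-process approximation that the paper simply asserts. One point in your favor: you explicitly flag that the continuous approximant of the $\mathcal F^{\mathcal X}_{t_k}$-measurable coefficient must be built so that it depends only on the trajectory up to $t_k$ (e.g.\ by working on the truncated path space, or by freezing the path after $t_k$, before applying Lusin--Tietze); the paper performs Lusin--Tietze on the \emph{full} path space $\tilde{\XR}_\Psi$ and does not spell out why the resulting extension $\tilde\theta^n_{k,\epsilon}$ is still $\mathcal F^{\mathcal X}_{t_k}$-measurable, so your remark fills in a step the paper leaves implicit.
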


\begin{proof} Let $\alpha$ be a bounded, $(\overline{\mathcal F}^Q_t)-$adapted control. 
There exists a sequence of simple processes $\beta^n$ such that 
$$\mathbb E \left[ \int_0^T |\beta^n_t - \alpha_t| dt \right] \underset{n\rightarrow +\infty}{\longrightarrow}0$$
and for each $n \in \mathbb N$, there exists a subdivision $0=t^n_0 < t^n_1 <\ldots < t^n_{K^n}=T$ of $[0,T]$ and random variables $A^n_0, \ldots, A^n_{K^n-1}$ such that
$$\beta^n_t = \sum_{k=0}^{K^n-1}A^n_{k}{\bf 1}_{[t^n_k, t^n_{k+1})}(t),$$
where $A^n_k$ is bounded and $\overline{\FR}^Q_{t^n_k}-$measurable for each $k \in \{0, \ldots, K^n-1\}$. In other words, for each $n\in \mathbb N$, $k \in \{0, \ldots, K^n-1\}$, there exists a $\overline{\FR}^Q_{t^n_k}-$measurable function $\theta^n_k$ such that 
$$A^n_k = \theta^n_k \left( x, w, p, m  \right).$$

Let us denote by $\tilde \XR$ the set
$$\tilde \XR = (C^0([0,T], \R^d))^2\times {\bf D} \times C^0([0,T], \mathcal P_2).$$

Since $Q$ is a Borel probability measure, by the weak Lusin theorem (see \cite[Theorem 2.24]{rudin1975analyse}), for every $\epsilon >0$, there exists a compact set $H_\epsilon \subset \tilde \XR$ such that $Q \left( \tilde \XR \backslash H_\epsilon \right)<\epsilon$ and the restriction $\theta^n_k|_{H_\epsilon}$ of $\theta^n_k$ to $H_\epsilon$ is continuous. Let us write $\theta^n_{k, \epsilon} := \theta^n_k|_{H_\epsilon}$.\\

Let us denote by $\Psi$ the set of all pseudo-paths (in the sense of Meyer-Zheng \cite{MeZh84}). Then $\Psi$ is a Polish space and for every $\epsilon>0,$ $H_\epsilon \subset \tilde \XR \subset \tilde \XR_\Psi := (C^0([0,T], \R^d))^2\times \Psi \times C^0([0,T], \mathcal P_2)$. In particular, $\tilde \XR_\Psi$ is a normal space\footnote{A topological space $X$ is a normal space if, given any disjoint closed sets $E$ and $F$, there are neighbourhoods $U$ of $E$ and $V$ of $F$ that are also disjoints. This is true for any metrizable space, for instance.}, and therefore by Tietze theorem (see \cite[Theorem 35.1]{munkres2019topology}), there exists a continuous extension of $\theta^n_{k,\epsilon}$ to $\tilde \XR_\Psi$. Let us denote this extension by $\tilde \theta^n_{k,\epsilon}$. In particular, it is continuous on $\tilde \XR$ and we consider its restriction to $\tilde \XR$, that we still denote by $\tilde \theta^n_{k,\epsilon}$.\\

It is clear that
$$
\tilde \theta^n_{k,\epsilon}(x,w,p,m) \overset{L^1}{\underset{\epsilon \rightarrow 0}{\longrightarrow}}\theta^n_k \left( x, w, p, m  \right),$$
so that the process 
$$
\alpha^n_{\ep,t} =  \sum_{k=0}^{K^n-1}\tilde \theta^n_{k,\epsilon}{\bf 1}_{[t^n_k, t^n_{k+1})}(t)
$$
is Carath\'eodory and converges to $\alpha$ in the sense of the lemma as $\ep\to 0$ and then $n\to\infty$. 
\end{proof}

We now show that $\hat m$ gives rise to an MFG system. Let $(\hat \phi,\hat M)$ be the solution to 
$$
\left\{\begin{array}{l}
d  \hat \phi_t(x) = \left\{  -\Delta \hat \phi_t (x)+H(x,D \hat \phi_t(x),p_t)- F(x,\hat m_t,p_t)\right\}dt +d \hat M_t(x)\qquad \text{in} \; (0,T)\times \R^d\\
\hat \phi_T(x)= G(x,\hat m_T,  p_T) \qquad \text{in} \; \R^d
\end{array}\right.
$$

\begin{Lemma}\label{lem.limMFG} The flow of measures $\hat m$ solves $Q-$a.s. and in the sense of distributions, 
$$
d\hat m_t(x) = \left\{ \Delta \hat m_t(x) +{\rm div}(H_\xi(x,D \hat \phi_t(x), p_t) m_t(x))\right\}dt\qquad \text{in} \; (0,T)\times \R^d. 
$$
Hence $(\hat \phi, \hat m, \hat M)$ is a solution to the MFG system on $(\mathcal X, Q, (\overline{\mathcal F}^Q_t))$. 
\end{Lemma}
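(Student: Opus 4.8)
The plan is to obtain the Fokker--Planck equation for $\hat m$ by passing to the limit $n\to\infty$ in the corresponding equation for ${\bf m}^{{\bf u}^{0,n}}$, relying on the stability of the MFG system established in the proof of Theorem~\ref{thm.existence} and, ultimately, on the uniform estimates of \cite{CSS22}. Recall that for each $n$ the triple $(\phi^{{\bf u}^{0,n}},{\bf m}^{{\bf u}^{0,n}},{\bf M}^{{\bf u}^{0,n}})$ solves the MFG system \eqref{eq.mfgintro} driven by ${\bf p}^{{\bf u}^{0,n}}$. By construction of $Q^n$, its $p$--coordinate is a copy of ${\bf p}^{{\bf u}^{0,n}}$ and its $\hat m$--coordinate is the associated MFG measure, so that, letting $\phi^n$ denote the solution of the Hamilton--Jacobi equation of Proposition~\ref{prop.HJHJHJ} driven by the coordinates $\hat m$ and $p$, one has, for every $h\in C_c^\infty([0,T)\times\R^d)$, $Q^n$--a.s.,
\[
\int_{\R^d} h(0,x)\,\bar m_0(dx)+\int_0^T\!\!\int_{\R^d}\bigl(\partial_t h+\Delta h-H_\xi(x,D\phi^n_s(x),p_s)\cdot Dh\bigr)(s,x)\,\hat m_s(dx)\,ds=0 .
\]

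To deal with the nonlinear drift I would add to the canonical space $\mathcal X$ an extra coordinate recording the path $s\mapsto D\phi^n_s$, viewed in a suitable space of c\`{a}dl\`{a}g curves valued in $C^1_{loc}(\R^d)$, and denote by $\tilde Q^n$ the corresponding law. The a priori bounds \eqref{boundsuD2uMmintro}--\eqref{addregu}, which are uniform in $n$, confine this coordinate to a fixed compact set of curves, so $(\tilde Q^n)$ is relatively compact as in the proof of Lemma~\ref{lem.compact}; let $\tilde Q$ be a subsequential limit, whose $\mathcal X$--marginal is $Q$. The crucial point is to identify the limiting gradient coordinate under $\tilde Q$ with $D\hat\phi$, where $\hat\phi$ is the unique (by Proposition~\ref{prop.HJHJHJ}) solution of the Hamilton--Jacobi equation driven by $\hat m$ on $(\mathcal X,Q,(\overline{\mathcal F}^Q_t))$. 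This is done exactly as in the proof of Theorem~\ref{thm.existence}: using an optimal coupling between the law of $p$ under $\tilde Q^n$ and under $\tilde Q$, Lemma~4.5 of \cite{CSS22}, together with the uniform H\"older estimates on $\phi^n$ obtained as in the proof of Theorem~\ref{thmMFGintro}, show that $\phi^n$ and $D\phi^n$ converge, along the coupling, to the solution $\hat\phi$, $D\hat\phi$ of the limiting equation; the same passage to the limit, together with $\|M^n\|_\infty\le C$, shows that the associated martingale part $\hat M$ is an $(\overline{\mathcal F}^Q_t)$--martingale and that $\|D\hat\phi\|_\infty+\|D^2\hat\phi\|_\infty+\|\hat M\|_\infty+\|\hat m\|_\infty\le C$.

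It remains to pass to the limit in the displayed identity. For fixed $h$, the map sending a point of the enlarged space to $\int_{\R^d}h(0,\cdot)\,\bar m_0+\int_0^T\!\int_{\R^d}(\partial_t h+\Delta h-H_\xi(\cdot,D\phi_s,p_s)\cdot Dh)\,\hat m_s\,ds$ is bounded and $\tilde Q$--a.e.\ continuous: $\hat m_s$ converges in $\mathcal P_2$ uniformly in $s$ and has a uniformly bounded density, the gradient coordinate and $p_s$ converge for a.e.\ $s$, and $H_\xi$ is continuous and locally bounded, so dominated convergence applies on the compact support of $Dh$. Since this map vanishes $\tilde Q^n$--a.s.\ for every $n$, it vanishes $\tilde Q$--a.s.; applying this to a countable family of test functions dense in $C_c^\infty$ yields that $\hat m$ solves, $Q$--a.s.\ and in the sense of distributions, $d\hat m_t(x)=\{\Delta\hat m_t(x)+{\rm div}(H_\xi(x,D\hat\phi_t(x),p_t)\hat m_t(x))\}dt$. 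Together with the regularity bounds above, the adaptedness of $\hat m$, $\hat\phi$ and $\hat M$ to $(\overline{\mathcal F}^Q_t)$, and the relations $\hat m_0=\bar m_0$, $\hat\phi_T=G(\cdot,\hat m_T,p_T)$, $\hat M_0=0$ (all of which pass to the limit), this checks every requirement of Definition~\ref{def.MFGsystintro}, so $(\hat\phi,\hat m,\hat M)$ is a solution of the MFG system on $(\mathcal X,Q,(\overline{\mathcal F}^Q_t))$.

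I expect the main obstacle to be the identification of the weak limit of the gradients $D\phi^n$ with $D\hat\phi$: the Hamilton--Jacobi solution depends on the random data $({\bf m}^{{\bf u}^{0,n}},{\bf p}^{{\bf u}^{0,n}})$ and, through the conditional expectation in its representation formula, on the underlying filtration, so this step genuinely relies on the quantitative stability estimates of \cite{CSS22} and on the fact that $p$ remains a martingale for the completed filtration $(\overline{\mathcal F}^Q_t)$ (Lemma~\ref{lem.basic}), which makes $\hat\phi$ the unique adapted solution of the limiting equation.
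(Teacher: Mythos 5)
Your proposal is correct and follows essentially the same route as the paper: both arguments hinge on (i) the observation that under $Q^n$ the pair $(\hat m,p)$ has the same law as $({\bf m}^{{\bf u}^{0,n}},{\bf p}^{{\bf u}^{0,n}})$, so that $\hat m$ together with the HJ solution $\phi^n$ on $(\mathcal X,Q^n)$ forms an MFG solution, and (ii) the stability estimates of Lemma~4.5 of \cite{CSS22}, evaluated along an optimal coupling $\gamma^n$ between $Q^n$ and $Q$, to pass to the limit and identify the limit triple as a solution of the MFG system under $Q$. Your augmentation of $\mathcal X$ by the gradient-path coordinate $(s\mapsto D\phi^n_s)$ is just a more explicit rendering of the paper's step ``conclude as in the proof of Theorem~4.1 of \cite{CSS22}''; note only that the claimed ``fixed compact set of curves'' must be understood in an $L^p$-in-time (or Meyer--Zheng) topology rather than uniformly in $t$, since $\phi^n$ is merely c\`adl\`ag — but this is immaterial because the quantitative $L^1$-convergence of $D\phi^n$ provided by Lemma~4.5 of \cite{CSS22} already supplies the identification directly, without invoking a separate compactness step.
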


\begin{proof} Let $(\phi^n, M^n)$ be the solution on $(\mathcal X,\mathcal F, Q^n, \overline{\mathcal F}^{Q^n})$  to the backward HJ equation 
$$
\left\{\begin{array}{l}
d   \phi^n_t(x) = \left\{  -\Delta  \phi^n_t (x)+H(x,D  \phi^n_t(x),p_t)- F(x,\hat m_t,p_t)\right\}dt +d  M^n_t(x)\qquad \text{in} \; (0,T)\times \R^d\\
 \phi^n_T(x)= G(x,\hat m_T,  p_T) \qquad \text{in} \; \R^d
\end{array}\right.
$$
Then, $(\phi^n, \hat m, M^n)$ is a solution of the MFG system on $(\mathcal X,\mathcal F, Q^n, \overline{\mathcal F}^{Q^n})$ because the law of $\hat m$ under $Q^n$ is the same as the law of $m^{{\bf u}^{0,n}}$ under $P^{{\bf u}^{0,n}}\otimes \P^n$ (see the proof of Lemma 4.5 of \cite{CSS22}). 

Following Lemma 4.5 of  \cite{CSS22}, we know that, if $\gamma^n\in \mathcal P^1(\mathcal X\times \mathcal X)$ is an optimal coupling\footnote{Recall that, even if $\mathcal X$ is not a Polish space, it is  contained in a Polish space on which we can work.} between $Q^n$ and $Q$, then, for any $R>0$,  
$$
\lim_n\sup_{t\in [0,T]} \int_{\mathcal X\times \mathcal X} \sup_{|x|\leq R} |{\bf \phi}^n_t(x,\omega)-\hat \phi_t(x,\omega')|\gamma^n(d\omega, d\omega') =0
$$
and 
$$
\lim_n \int_{\mathcal X\times \mathcal X} \int_0^T\int_{B_R} |D{\bf \phi}^n_t(x,\omega)-D\hat \phi_t(x,\omega')|dxdt\gamma^n(d\omega, d\omega') =0.
$$
We can then conclude, as in the proof of Theorem  4.1 of \cite{CSS22} that $(\hat \phi,\hat m,\hat M)$ is a solution to the MFG system on $(\mathcal X,\mathcal F, Q, {\mathcal F}^{Q})$. 
\end{proof}

\begin{Corollary}\label{lem.key2} We have $Q[m=\hat m]=1$.  
\end{Corollary}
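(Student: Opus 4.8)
The plan is to read off the conclusion from the uniqueness of solutions to the MFG system (Theorem \ref{thmMFGintro}). Two ingredients are already in place. First, the (unlabeled) lemma just before Lemma \ref{lem.densite} shows that $(\tilde\phi^Q, m, \tilde M^Q)$ is a solution of the MFG system \eqref{eq.mfgintro} on the filtered probability space $(\mathcal X, Q, (\overline{\mathcal F}^Q_t))$, the common-noise process being the last coordinate $p$ on $\mathcal X$. Second, Lemma \ref{lem.limMFG} shows that $(\hat\phi, \hat m, \hat M)$ is likewise a solution of \eqref{eq.mfgintro} on the very same filtered probability space and for the very same process $p$. Since the two problems share all of their data -- the Hamiltonians $H_i$, the couplings $F_i$ and $G_i$, the process $p$, and the initial measure $\bar m_0$ -- Theorem \ref{thmMFGintro} forces the two solution triples to coincide; in particular $m = \hat m$ holds $Q$-a.s., which is the claim.

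The only points needing a line of verification are that both flows of measures emanate from $\bar m_0$ and that Theorem \ref{thmMFGintro} genuinely applies on $(\mathcal X, Q, (\overline{\mathcal F}^Q_t))$. For $m$: by Lemma \ref{lem.basic}, $x_0$ has law $\bar m_0$ and is independent of $(m,p)$, and $m$ is a version of the conditional law of $x$ given $(m,p)$; evaluating at $t=0$ gives $m_0 = \bar m_0$ $Q$-a.s. For $\hat m$: the $Q^n$-law of $\hat m$ equals the $P^{{\bf u}^{0,n}} \otimes \P^n$-law of $m^{{\bf u}^{0,n}}$ (already used in the proof of Lemma \ref{lem.limMFG}), and $m^{{\bf u}^{0,n}}_0 = \bar m_0$; passing this property to the weak limit gives $\hat m_0 = \bar m_0$ $Q$-a.s. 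Finally, $p$ is adapted to $(\overline{\mathcal F}^Q_t)$ and this completed filtration satisfies the usual conditions, so the hypotheses of Theorem \ref{thmMFGintro} are met.

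I do not anticipate a real obstacle in this step: the substantive work -- the identification in the preceding lemma of the drift $\alpha = (x-\sqrt2\,w)'$ as the optimal feedback $-H_\xi(x, D\tilde\phi^Q(x), p)$ via Proposition \ref{prop.HJHJHJ}, and the stability of the MFG system under the Meyer--Zheng convergence $Q^n \to Q$ borrowed from \cite{CSS22} in Lemma \ref{lem.limMFG} -- has already been carried out. The corollary is the bookkeeping step that feeds these two ``is-a-solution'' statements into the uniqueness theorem.
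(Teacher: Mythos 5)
Your proof is correct and follows exactly the paper's argument: both $(\tilde\phi^Q,m,\tilde M^Q)$ and $(\hat\phi,\hat m,\hat M)$ solve the MFG system \eqref{eq.mfgintro} on $(\mathcal X, Q, (\overline{\mathcal F}^Q_t))$ with the same data and common noise $p$, so the uniqueness assertion of Theorem \ref{thmMFGintro} gives $m=\hat m$ $Q$-a.s. The extra verifications you supply (that both flows emanate from $\bar m_0$, and that the uniqueness theorem applies on the completed canonical filtration) are sound and merely make explicit what the paper leaves implicit.
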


\begin{proof} We have proved that $(\hat \phi, \hat m, \hat M)$ and $(\tilde \phi^Q, m, \tilde M^Q)$ are two solutions of the MFG system on $(\mathcal X, Q, (\overline{\mathcal F}^Q_t))$. By the uniqueness of the solution (Theorem \ref{thmMFGintro}), we obtain $m=\hat m$ $Q-$a.s.. 
\end{proof}

\begin{proof}[Proof of Proposition \ref{prop.Ndeltaprim}]  Corollary \ref{lem.key2}  leads to a contradiction with \eqref{hypcontraBIS} and completes the proof of Proposition \ref{prop.Ndeltaprim}. 
\end{proof}

\end{document}